\title{Regularity for degenerate two-phase \\ free boundary problems}
\author{Raimundo Leit\~ao,  \quad Olivaine S. de Queiroz,  \quad $\&$  \quad Eduardo V. Teixeira }
\date{}
\newlength{\hchng}
\newlength{\vchng}
\def \div {\mathrm{div}}
\def \dist {\mathrm{dist}}
\newtheorem{theorem}{Theorem}[section]
\newtheorem{lemma}[theorem]{Lemma}
\newtheorem{proposition}[theorem]{Proposition}
\theoremstyle{definition}
\theoremstyle{remark}
\newtheorem{remark}[theorem]{Remark}
\numberwithin{equation}{section}
\newcommand{\intav}[1]{\mathchoice {\mathop{\vrule width 6pt height 3 pt depth  -2.5pt
\kern -8pt \intop}\nolimits_{\kern -6pt#1}} {\mathop{\vrule width
5pt height 3  pt depth -2.6pt \kern -6pt \intop}\nolimits_{#1}}
{\mathop{\vrule width 5pt height 3 pt depth -2.6pt \kern -6pt
\intop}\nolimits_{#1}} {\mathop{\vrule width 5pt height 3 pt depth
-2.6pt \kern -6pt \intop}\nolimits_{#1}}}
\begin{document}
\maketitle

\begin{abstract}

We provide a rather complete description of the sharp regularity
theory for a family of heterogeneous, two-phase variational free boundary
problems, $\mathcal{J}_\gamma \to $ min,  ruled by nonlinear, $p$-degenerate elliptic operators. Included
in such family are heterogeneous cavitation problems of
Prandtl-Batchelor type; singular degenerate elliptic equations; and
obstacle type systems. The Euler-Lagrange equation associated to $\mathcal{J}_\gamma$ becomes singular along the free interface $\{u= 0\}$. The degree of singularity is, in turn, dimed by the parameter $\gamma \in [0,1]$.  For $0< \gamma < 1$ we show local minima is locally of class $C^{1,\alpha}$ for a sharp $\alpha$ that depends on dimension, $p$ and $\gamma$. For $\gamma = 0$ we obtain a quantitative, asymptotically optimal result, which assures that local minima are Log-Lipschitz continuous. 
The results proven in this article are new even in the classical context of linear, nondegenerate
equations.

\medskip

\noindent{\sc keywords}: free
boundary problems, degenerate elliptic operators, regularity theory.

\smallskip

\noindent{\sc MSC2000}: 35R35, 35J70,
35J75, 35J20.

\tableofcontents

\end{abstract}

\section{Introduction}

Let $\Omega\subset\mathbb R^n$ be a bounded domain, $2\le p< +\infty$,
$f \in L^q(\Omega)$ for $q \ge  n$ and $\varphi \in
W^{1,p}(\Omega)\cap L^\infty(\Omega)$, with, say, $  \varphi^{+}  \not = 0$. The objective of the present manuscript is to derive optimal interior
regularity estimates for the archetypal class of heterogeneous
non-differentiable functionals
\begin{equation}\label{eq.variational}
    \mathcal{J}_\gamma(v):=\int_\Omega\left (|\nabla
    v|^p+F_\gamma(v) + f(X)\cdot v \right )dX \longrightarrow \text{min},
\end{equation}
among competing functions $v \in W^{1,p}_0(\Omega) + \varphi$. The
parameter $\gamma$ in \eqref{eq.variational} varies continuously
from $0$ to $1$, i.e., $\gamma \in [0, 1]$ and the
non-differentiable potential $F_\gamma$ is given by
\begin{equation}\label{def Fgamma}
    F_\gamma(v):=\lambda_+(v^+)^\gamma+\lambda_-(v^-)^\gamma,
\end{equation}
for scalars $0 \le \lambda_{-} < \lambda_{+} < \infty$. As usual,
$v^{\pm}:=\max\{\pm v,0\}$, and, by convention,
\begin{equation}\label{def F0}
    F_0(v):=\lambda_+ \chi_{\{v > 0 \}} +\lambda_{-}\chi_{\{v \le 0
    \}}.
\end{equation}

\medskip

The non-differentiability of the potential $F_\gamma$ impels the Euler-Lagrange equation associated to
$\mathcal{J}_\gamma$ to be singular along the \textit{a priori}
unknown interface
$$
    \mathfrak{F}_\gamma := \left ( \partial \{u_\gamma > 0\} \cup \partial \{u_\gamma < 0\}  \right ) \cap \Omega,
$$
between the positive and negative phases of a minimum. In fact,
a minimizer satisfies, in some weak sense, the following
$p$-degenerate and singular PDE
\begin{equation} \label{intro eq satisfied}
    \Delta_p u =\frac{\gamma}{p}\left ( \lambda_+(u^+)^{\gamma-1}\chi_{\{u>0\}}-
    \lambda_-(u^-)^{\gamma-1}\chi_{\{u < 0\}}\right ) + \frac{1}{p} f(X) \quad \text{in}
    \quad\Omega,
\end{equation}
where $\Delta_p u$ denotes the classical $p$-Laplacian operator, 
$$
	\Delta_p u := \text{div}(|\nabla u|^{p-2}\nabla u).
$$
The potential $F_0$ is actually discontinuous and that further enforces the flux balance
\begin{equation}\label{intro flux balance}
    |\nabla u_0^{+}|^{p} - |\nabla u_0^{-}|^p = \dfrac{1}{p-1} \left (\lambda_+ - \lambda_{-} \right ),
\end{equation}
along the free boundary of the problem, which breaks down the continuity of the gradient through $\mathfrak{F}_0$.

\medskip

A number of important mathematical problems, coming from
several different contexts, are modeled by optimization setups, for
which equation \eqref{eq.variational} serves as an em\-ble\-ma\-tic,
leading prototype. This fact has fostered massive investigations, and
linear versions, $p=2$, of the minimization problem
\eqref{eq.variational} have indeed received overwhelming attention
in the past four decades. The upper case $\gamma = 1$ is related to
obstacle type problems. The linear, homogeneous, one phase obstacle
problem, i.e., $p=2$, $f(X) \equiv 0$ and $\varphi \ge 0$ was fully
studied in the 70's by a number of leading mathematicians: Frehse,
Stampacchia, Kinderlehrer, Brezis, Caffarelli, among others. It has
been established that the minimum is locally of class $C^{1,1}$ and
this is the optimal regularity for solution. The two-phase
version of the problem, i.e., with no sign constrain on the boundary
datum $\varphi$, challenged the community for over three decades.
$C^{1,1}$ estimate for two-phase obstacle problems was established
in \cite{S} with the aid of the powerful \textit{almost}
monotonicity formula obtained in \cite{CJK}.

\medskip

The lower limiting case, $\gamma = 0$, relates to jets flow and
cavities problems. The linear, homogeneous, one phase version of
the problem was studied in \cite{AC}, where it is proven that
minima are Lipschitz continuous. The two-phase version of this
problem brings major new difficulties and $C^{0,1}$ local
regularity of minima was proven in \cite{ACF}, with the aid of the
revolutionary Alt-Caffarelli-Friedman monotonicity formula,
developed in that very same article. Gradient estimates for
two-phase cavitation type problem with bounded non-homogeneity, i.e., $p=2$, $f
\in L^\infty$,  $\gamma =0$ in \eqref{eq.variational}, was
established by Caffarelli, Jerison and Kenig with the aid of their
powerful \textit{almost} monotonicity formula, \cite{CJK}.

\medskip

The intermediary problem $0< \gamma < 1$ has also received great
attention in the past decades. The related free boundary problem
can be used, for example, to model the density of certain chemical
specie, in reaction with a porous catalyst pellet.  The linear,
$p=2$, one-phase, $\varphi\ge 0$, homogeneous, $f\equiv 0$,
version of the problem \eqref{eq.variational} is the theme of a
successful program developed in the 80's by Phillips and
Alt-Phillips, \cite{phillips-cpde}, \cite{phillips-iumj} and
\cite{alt-phillips-crelle}, among others. In similar setting,
H\"older continuity of the gradient of minimizers was proven
Giaquinta and Giusti \cite{GG}.  Further investigations on the
linear, two-phase version of this problem also require powerful
monotonicity formulae in their studies, see \cite{weiss-cpde}.

\medskip

In the mathematical analysis of variational free boundary problems
as \eqref{eq.variational}, the first major key issue to be
addressed concerns the optimal regularity estimate available for a
given minimum. A simple inference on the weak Euler-Lagrange
equation satisfied by a minimum, Equation \eqref{intro eq
satisfied} and also the Flux Balance \eqref{intro flux balance}
for $\gamma =0$, revel that $\Delta_p u$ blows-up along the free
boundary of the problem, $\mathfrak{F}_\gamma := \partial
\{u_\gamma > 0\} \cup \partial \{u_\gamma <0\}$. Therefore, it
becomes a fundamental question to understand precisely how this
phenomenon affects the (lack of) smoothness properties of minima.
Under such perspective, and to some extent, the theory of
two-phase free boundary problems governed by non-linear,
degenerate elliptic operators had hitherto been unaccessible
through current literature, mainly due to the lack of monotonicity
formulae in this context.

\medskip

In the study of sharp smoothness properties of minima to the
functional $\mathcal{J}_\gamma$, further difficulties also arise
from the very complexity of the regularity theory for the
governing operator $\Delta_p$. We recall that $p$-harmonic functions,
i.e., solutions to the homogeneous equation
$$
    \Delta_p h = 0 \quad \text{ in } B_1,
$$
are locally of class $C^{1,\alpha_p}$ for an exponent $0 <
\alpha_p < 1$ that depends only upon dimension and $p$. The
precise value of $\alpha_p$ is in general unknown -- see
\cite{iwa-man-rmi} for the planar case $n=2$.  This fact indicates
that interior estimates available for $p$-harmonic functions, that
in turn are below quadratic, $C^{1,1}$, will compete with optimal
growth along the free interface $\mathfrak{F}_\gamma$. The
regularity theory for heterogeneous equations $\Delta_p \xi =
f(X)$ is even further involved and, up to our knowledge, the
understanding on this class of problems is not yet fully complete.

\medskip

From the mathematical point of view, the exponent $\gamma$ appearing
in \eqref{eq.variational} should be comprehended as the parameter
that measures the singularity of the absorption term of the related
equation. For non-differentiable but continuous functionals,
$\mathcal{J}_\gamma$ with $0 < \gamma \le 1$, it has been
conjectured that the gradient of a minimum is locally continuous, even through the singular free interface
$\mathfrak{F}_\gamma$. The first result we present in this paper
gives an affirmative answer to such question. Furthermore, it
provides the \textit{asymptotically} optimal $C^{1,\alpha}$ interior
regularity theory available for minima of such functionals.

\begin{theorem}[$C^{1,\alpha}$ regularity estimates] \label{thm 1} Let $u$ be a minimizer of the problem \eqref{eq.variational}. Assume $0< \gamma \le 1$ and $f \in L^q(\Omega)$, for some $q > n$. Then $u \in C_\text{loc}^{1,\alpha}$, for
\begin{equation}\label{sharp reg intro}
    \alpha := \min \left \{ \alpha_p^{-}, \frac{\gamma}{p-\gamma},  \frac{(q-n)}{(p-1)q}  \right \},
\end{equation}
where the estimate indicated in \eqref{sharp reg intro} should be read as
\begin{equation}\label{sharp reg intro explained}
    \left |
        \begin{array}{lll}
            \text{If } \min \left \{\frac{\gamma}{p-\gamma}, \frac{(q-n)}{(p-1)q} \right \} < \alpha_p, &\text{then }  & u \in C^{1,\min \left \{\frac{\gamma}{p-\gamma}, \frac{(q-n)}{(p-1)q} \right \}}. \\ \\
            \text{If }  \min \left \{\frac{\gamma}{p-\gamma}, \frac{(q-n)}{(p-1)q} \right \}  \ge \alpha_p, &\text{then } & u \in C^{1,\sigma}, \text{ for any } 0<\sigma < \alpha_p.
    \end{array}
  \right.
\end{equation}
Furthermore, for any $\Omega' \Subset \Omega$, there exists a
constant $C>0$ depending only on, $\Omega'$, n, $p$, $q$,
$\|\varphi\|_{L^\infty(\Omega)}$, $\|f\|_{L^q(\Omega)}$,
$\lambda_{+}$, $\lambda_{-}$, $\gamma$ and $(\alpha_p - \alpha)$, such that
$$
    \|u\|_{C^{1,\alpha}(\Omega')} \le C.
$$
\end{theorem}

\medskip

Before continuing, let us make few comments on Theorem \ref{thm 1}
and its implications. The key ingredient of  the regularity
estimate established in Theorem \ref{thm 1} reveals how the
competing forces involved in the lack of smoothness for minima of \eqref{eq.variational}, namely
$$
(\text{regularity theory for } \Delta_p ) \times (\text{singular
absorption term } \sim u^{\gamma-1} ) \times (\text{roughness of
the source } f)
$$
get adjusted, via the sharp relation \eqref{sharp reg intro}.
Regarding the exponent $\alpha_p$, one easily verifies that  the function  $X \mapsto |X|^{\frac{p}{p-1}}$ has bounded $p$-laplacian, thus $\alpha_p$ is appearing in \eqref{sharp reg intro} is 
below the critical value $\frac{1}{p-1}$. However, nonnegative
functions with bounded $p$-Laplacian, $v$, do grow as
$\text{dist}^{\frac{p}{p-1}}(X, \mathfrak{F})$ away from
$\mathfrak{F} = \partial \{v > 0 \}$, see \cite{KKPS}.  In this
particular setting, it is possible to replace $\alpha_p$ by
$\frac{1}{p-1}$ in \eqref{sharp reg intro}. Thus, at least if $f \in
L^\infty$, Theorem \ref{thm 1} revels $u \in
C^{\frac{\gamma}{p-\gamma}}$, which is the precise generalization of
the optimal regularity estimate obtained for the one-phase linear
setting $p=2$, see for instance \cite{phillips-iumj, phillips-cpde}.

\medskip

Confronting the effect of the singular absorption term $\sim
u^{\gamma -1}$ and the influence of integrability properties of
the source $f$, we conclude that solutions to
\eqref{eq.variational} are locally in $C^{1,  \min\{ \alpha_p^{-},
\frac{\gamma}{p-\gamma} \}}$, provided $f \in L^q$ for any
\begin{equation}\label{best q}
    q \ge n \cdot \frac{(p-\gamma)}{p(1-\gamma)} =: q(p,n,\gamma).
\end{equation}
Interestingly enough, one verifies that
\begin{equation}\label{int of f gamma=1}
    q(p,n,1^{-}) = \infty \quad \text{ and } \quad \lim\limits_{\gamma \to 1} \frac{(q(p,n,\gamma)-n)p}{(p-1)q(p,n,\gamma)} = \frac{1}{p-1}.
\end{equation}
Also it is revealing to compute the limit
\begin{equation}\label{int of f gamma=0}
    \lim\limits_{\gamma \to 0} q(p,n,\gamma) = n,
\end{equation}
which leads us to the discussion of the delicate limiting case,
$\gamma =0$ in the minimization problem \eqref{eq.variational}. As
mentioned earlier in this Introduction, for homogeneous, $f\equiv
0$, linear, $p=2$, jets and cavities problems, Lipschitz regularity
estimates have been established in the one-phase and two-phase
case, respectively in \cite{AC} and \cite{ACF}. Heterogeneous,
two-phase versions of the problem could only be approached  quite
recently, with the aid of the \textit{almost} monotonicity
formula, \cite{CJK}. However, the Caffarelli-Jerison-Kenig
monotonicity formula requires a one-side bound for the
non-homogeneous term $f(X)$, namely, $f(X) \ge -C$. Thus, even for
linear problems, $p=2$, Lipschitz estimates for minimizers of
\eqref{eq.variational}, $\gamma = 0$, are only known if $f \in
L^\infty(\Omega)$. We further point out that the integrability
exponent obtained in \eqref{int of f gamma=0} is a borderline
condition, as it divides the regularity theory for (non-singular)
Poisson equations, $\text{L}u = f$, between continuity estimates
when $f \in L^{n-\epsilon}$ and differentiability properties when
$f \in L^{n+\epsilon}$. The optimal regularity theory for the
conformal case $f \in L^n$ is rather delicate. It has been
recently established by the third author, \cite{T}, that solutions
to nonlinear equations $F(X, D^2u) = f(X) \in L^n$ has a universal
Log-Lipschitz modulus of continuity, i.e.,
$$
    |u(X) - u(Y)|  \lesssim |X-Y| \cdot \log |X-Y|.
$$
Such regularity is optimal in the context of heterogeneous equations
with $L^n$ right-hand-sides.  After some heuristic inferences, it
becomes reasonable to inquire whether minimizers of problem
\eqref{eq.variational}, with $\gamma = 0$, also has a universal
Log-Lipschitz modulus of continuity.  The second main result we
establish in this paper states that indeed minimizers of
$\mathcal{J}_0$ with sources $f \in L^n$ also enjoy such an optimal
universal modulus of continuity.

\begin{theorem}[Log-Lipschitz  regularity for $\gamma =0$] \label{thm 2} Let $u$ be a
minimizer of the problem \eqref{eq.variational}, with $\gamma =0$
and $f \in L^n(\Omega)$. Then $u$ is Log-Lipschitz continuous and
for any $\Omega' \Subset \Omega$, there exists a constant $C$ that
depends only on, $\Omega'$, $n$, $p$,
$|\varphi\|_{L^\infty(\Omega)}$, $\|f\|_{L^n(\Omega)}$,
$\lambda_{+}$ and $\lambda_{-}$, such that
$$
    |u(X) - u(Y) | \le  C |X-Y| \log|X-Y|.
$$
\end{theorem}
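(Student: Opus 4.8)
The plan is to prove Theorem \ref{thm 2} by a continuous-in-scale geometric
iteration, comparing $u$ at dyadic scales with $p$-harmonic functions and
carefully tracking how the degeneracy of the free boundary together with the
$L^n$ source contribute a logarithmic loss. The starting point is the basic
\emph{nondegeneracy/growth} information that comes for free from minimality: since
$F_0$ is bounded and $f\in L^n$, the minimizer $u$ solves, in the sense of
distributions away from $\mathfrak{F}_0$, $\Delta_p u = \tfrac1p f$, and a
standard comparison argument (testing $\mathcal{J}_0$ against the $p$-harmonic
replacement in a ball) gives the energy decay estimate
\[
    \intav{B_r(X_0)} |\nabla u|^p \, dX \le C\Big( 1 + \|f\|_{L^n}^{p'} \Big),
\]
so $u$ is at least Lipschitz \emph{in an averaged} sense; the point of the theorem
is to upgrade this to the pointwise Log-Lipschitz modulus.

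The heart of the argument is a one-step improvement-of-flatness lemma of the
following shape: there are universal constants $0<\rho<1$ and $C_0$ such that, if
$u$ is a minimizer in $B_1$ normalized so that $\fint_{B_1}|\nabla u|^p\le 1$ and
$\|f\|_{L^n(B_1)}\le\delta$ for a small universal $\delta$, then
\[
    \sup_{B_\rho} |u - \ell| \le \tfrac12 \rho \, \sup_{B_1}|u-\ell_0|
    \;+\; C_0\Big(\|f\|_{L^n(B_1)} + \text{(free-boundary contribution)}\Big)
\]
for an affine (or constant) correction. I would obtain this by contradiction and
compactness: if no such $\rho$ works, rescale a sequence of putative counterexamples
$u_k$ and use the De Giorgi--Nash--Moser / $C^{1,\alpha_p}$ estimates for the
$p$-Laplacian, together with compactness of minimizers (lower semicontinuity of
$\mathcal{J}_0$ and the uniform energy bound above), to extract a limit that is a
genuine $p$-harmonic function $h$ in $B_1$; then the interior $C^{1,\alpha_p}$
estimate for $h$ — with $\alpha_p > 0$, hence in particular better than linear —
contradicts the failure of decay. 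The delicate point is that in the two-phase,
$\gamma=0$ regime the potential $F_0$ is \emph{discontinuous}, so in the limit one
must control the Lebesgue measure of the transition region $\{|u_k|<\varepsilon_k\}$;
here minimality is used again (via the density estimates of Alt--Caffarelli type,
which survive in the nonlinear setting and do not require a monotonicity formula) to
show the bad set shrinks, so that the source of the potential term does not
obstruct passage to the $p$-harmonic limit.

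With the one-step lemma in hand, the Log-Lipschitz bound follows by iterating at
dyadic radii $r_j = \rho^j$ and summing the geometric series of errors. The key
arithmetic is that the source contributes, at scale $r_j$, a term of order
$r_j \cdot \|f\|_{L^n(B_{r_j})} \lesssim r_j$ (scale-invariance of the $L^n$ norm in
dimension $n$ is exactly what makes this borderline), so that after $j$ steps the
accumulated oscillation of the gradient is bounded by a constant times $j$, i.e.
roughly $|\log r_j|$; unwinding the normalization gives
$|u(X)-u(Y)|\lesssim |X-Y|\,\big|\log|X-Y|\big|$ on $\Omega'\Subset\Omega$, with the
stated dependence of the constant. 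Technically one runs the iteration separately at
points of $\mathfrak{F}_0$ and at interior points of $\{u>0\}$ or $\{u<0\}$, and then
interpolates along the segment $[X,Y]$ by splitting it at the last free-boundary
point, a now-standard device.

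I expect the main obstacle to be precisely the compactness step in the two-phase
setting: without an Alt--Caffarelli--Friedman or Caffarelli--Jerison--Kenig
(almost-)monotonicity formula one cannot directly control the competition between the
two phases, and the discontinuity of $F_0$ means that naive energy compactness could
produce a limit that is not $p$-harmonic across the limiting interface. The resolution
I would pursue is to exploit \emph{minimality} rather than the PDE alone — uniform
positive density of both $\{u>0\}$ and $\{u<0\}$ near $\mathfrak{F}_0$, obtained from
comparison with suitable barriers (sub-/super-$p$-harmonic competitors) — to force the
transition layer to have vanishing measure in the blow-up, thereby recovering
$p$-harmonicity of the limit and closing the contradiction argument. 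The fact that
$\gamma=0$ makes the potential an $L^\infty$ perturbation (not a singular one) is what
keeps this feasible; the $L^n$ borderline enters only through the scale-invariant
error bookkeeping described above.
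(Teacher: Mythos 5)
Your proposal takes a genuinely different route from the paper, but it has substantive gaps.

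\textbf{What the paper actually does.} The paper's proof of Theorem~\ref{thm 2} never uses compactness, blow-up, improvement of flatness, or density estimates. It is a short, direct energy argument: compare $u$ to its $p$-harmonic replacement $h$ in $B_R$, use minimality and the trivial bound $|F_0(h)-F_0(u)|\le (\lambda_++\lambda_-)$, Poincar\'e--H\"older--Young on the $f(h-u)$ term (the $L^n$ scaling makes this contribute exactly $O(|B_R|)$), and the Campanato-type lemma (Lemma~\ref{key osc lemma p-harm} plus Lemma~\ref{elem.ineq}) to conclude $\int_{B_r}|\nabla u-(\nabla u)_r|^p\le C r^n$. That is precisely $\nabla u\in\mathrm{BMO}$. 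The Fefferman--Stein decomposition plus the Riesz-transform $\log$-estimate of Kiselev--Nazarov--Volberg then gives the pointwise bound $|\nabla u(X)|\lesssim -\log|X-X_0|$, and a Morrey-type integration yields the Log-Lipschitz modulus. The discontinuity of $F_0$ is completely harmless in this scheme because $F_0$ is bounded, so it never needs to be ``controlled in the limit'' — there is no limit.

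\textbf{Gaps in your proposal.} First, your stated starting estimate $\fint_{B_r}|\nabla u|^p\le C(1+\|f\|_{L^n}^{p'})$ uniformly in $r$ is too strong: by Lebesgue differentiation it would force $\nabla u\in L^\infty$, i.e.\ Lipschitz continuity, which is exactly what cannot hold in the borderline $f\in L^n$ regime (and which the theorem correctly does not claim). The correct and attainable bound is the oscillation decay $\fint_{B_r}|\nabla u-(\nabla u)_r|^p\le C$, i.e.\ BMO, which is the key estimate the paper proves. Second, the resolution you propose for the two-phase compactness obstacle — nondegeneracy and density estimates for both phases obtained by comparison with barriers — is not available here: in this very paper those nondegeneracy results (Theorems~\ref{t.growth} and \ref{t.strong nondeg}) are proved only under the strictly stronger hypothesis $f\in L^q$, $q>n$, and the Harnack-based argument used there genuinely degenerates at $q=n$. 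So the ``bad set has vanishing measure'' step in your blow-up cannot be carried out under the theorem's hypotheses. Third, the improvement-of-flatness/contradiction--compactness scaffolding is considerably heavier than what is needed and, as written, is not closed: the one-step lemma is never reduced to a concrete decay inequality, and the ``interpolate along the segment $[X,Y]$ by splitting at the last free-boundary point'' device tacitly assumes regularity of $\mathfrak{F}_0$ that is not established. The energy-only route in the paper sidesteps all of these difficulties precisely because it treats $F_0$ merely as a bounded perturbation and never localizes at the free boundary.

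\textbf{What each approach buys.} Your iteration correctly identifies the borderline bookkeeping — a fixed $O(1)$ gradient-oscillation contribution per dyadic scale summing to $O(|\log r|)$ — which is morally the same arithmetic that BMO$\Rightarrow$Log-Lip encodes via John--Nirenberg/Fefferman--Stein. If the compactness step could be closed, your approach would likely generalize to nonvariational settings where no $p$-harmonic replacement competitor is available. But under the hypotheses of Theorem~\ref{thm 2} ($\gamma=0$, $f\in L^n$), the paper's energy-comparison argument is both sufficient and much shorter, and your proposal as stated has a wrong intermediate claim and an unresolved gap at the blow-up/density-estimate step.
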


\medskip

In particular, Theorem \ref{thm 2} assures that $u \in
C_\text{loc}^{0,\tau}(\Omega)$ for any $\tau < 1$. We further
mention that Theorem \ref{thm 2} is sharp due to the borderline
integrability condition on the source $f$. We leave open the key question on whether functional $\mathcal{J}_0$
has a locally Lipschitz minimizer, provided $f \in L^q(\Omega)$ for
$q>n$. We highlight that this question remained open even for the
linear case $p=2$, as no \textit{almost} monotonicity formula can
be established unless the source function $f$ is bounded. A critical
analysis on the machinery employed in the proof of Log-Lipschitz
estimates, Theorem \ref{thm 2}, reveals that it should not be
possible to access  the $C^{0,1}$ regularity theory for minima of $\mathcal{J}_0$  
through pure energy considerations, even if
the source $f\in L^\infty$. 

\medskip

We further mention that Theorem \ref{thm 1} and Theorem \ref{thm 2} can be
established, with minor modifications, to
further involved energy functionals of the type
$$
    \tilde{\mathcal{G}_\gamma} (v) = \int_\Omega G(X, \nabla v) + G_\gamma(v) + g(X,v) dX,
$$
where $G$ is a $p$-degenerate kernel with $C^1$ coefficients,
$|G_\gamma|  \lesssim F_\gamma$ and $|g(X,v)| \le
\tilde{g}(X)|v|^m$, where $0\le m < p$ and $\tilde{g} \in
L^{\tilde{q}}$, for  $\tilde{q} \geq \max\left\lbrace
\frac{p}{p-m}, n \right\rbrace$. We have chosen to present our
results in a simpler setting as to further emphasize the new ideas designed in this work.

\medskip

The paper is organized as follows. In Section \ref{s. pre} we
gather few tools that we shall use in the proofs of Theorem
\ref{thm 1} and Theorem \ref{thm 2}. In
Section \ref{section exist and bound} we comment on existence and
establish universal $L^\infty$ bounds for minima of problem
\eqref{eq.variational}. Section \ref{s. proof thm 1} is devoted to
the proof of Theorem \ref{thm 1} and in Section \ref{s. proof thm
2} we establish Log-Lip estimates for cavitation problems, proving
therefore Theorem \ref{thm 2}. Under the condition $f \in L^q$,
$q> n$, in Section \ref{S. nondeg} we show sharp linear growth and
strong nondegeneracy properties for solutions to the cavitation
problem $\gamma =0$. In Section \ref{S. stability} we investigate stability properties for the family of free problems $\mathcal{J}_\gamma$ in terms of the singular parameter $0 \le \gamma \le 1$. More precisely we show that
local minima of functional $\mathcal{J}_\gamma$ converges to a
local minima of the functional $\mathcal{J}_0$, as $\gamma \to 0$.  

\medskip

\medskip

\noindent \textbf{Acknowledgement}  RL research has been funded by Capes and CNPq. OSQ has been partially supported by
FAPESP/SP-Brazil. ET thanks support from CNPq-Brazil.

\section{Preliminaries and some known tools} \label{s. pre}

In this section we gather some preliminaries results that we will
systematically use along the article. Initially, as mentioned within the Introduction, clearly one
should not expect solutions to the minimization problem
\eqref{eq.variational} to be smoother than $p$-harmonic functions.
Therefore, the regularity theory for degenerate elliptic operators
is a first key ingredient in understanding sharp estimates for
minima of $\mathcal{J}_\gamma$.

There are several different strategies to establish the
$C^{1,\alpha_p}$ regularity theory for $p$-harmonic functions, see
for instance \cite{dibened-na1983}, \cite{evans-jde},
\cite{lewis-iumj}, \cite{tolk-jde}, \cite{uhlemb-acta} and
\cite{uralt-68}. We state such result for future references.

\begin{theorem}[$C^{1,\alpha}$ estimates for $p$-harmonic functions]\label{thm reg p-harm}
Let $h \in W^{1,p}(B_1)$ satisfy $\Delta_p h = 0$ in $B_1$ in the
distributional sense. Then, there exist constants $C>0$ and $0<
\alpha_p < 1$, both depending only on dimension and $p$, such that
$$
    \|h\|_{C^{1,\alpha_p}(B_{1/2})} \le C \|h\|_{L^p(B_1)}.
$$
\end{theorem}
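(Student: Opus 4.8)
The plan is to follow the classical regularization route: reduce the estimate to an a priori bound for smooth solutions of a nondegenerate approximation, with all constants independent of the regularization parameter, and then pass to the limit. For $\epsilon\in(0,1)$, I would let $h_\epsilon\in C^\infty(B_{3/4})$ solve $\mathrm{div}\big((\epsilon+|\nabla h_\epsilon|^2)^{\frac{p-2}{2}}\nabla h_\epsilon\big)=0$ in $B_{3/4}$ with boundary datum $h_\epsilon=h$ on $\partial B_{3/4}$; since this operator is uniformly elliptic with smooth structure, classical quasilinear theory gives $h_\epsilon\in C^\infty(B_{3/4})$. The comparison principle, together with Moser's $L^\infty$ bound for the $p$-harmonic function $h$ and the natural energy estimate, yields $\|h_\epsilon\|_{L^\infty(B_{3/4})}\le C\|h\|_{L^p(B_1)}$ and $h_\epsilon\to h$ strongly in $W^{1,p}(B_{3/4})$ as $\epsilon\to0$. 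Hence the whole task becomes the $\epsilon$-uniform estimate $\|h_\epsilon\|_{C^{1,\alpha_p}(B_{1/2})}\le C\|h\|_{L^p(B_1)}$, after which Arzel\`a--Ascoli transfers the bound to $h$.

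The first half of that estimate is an $\epsilon$-uniform Lipschitz bound, which I would get by differentiating the equation: setting $v:=\partial_k h_\epsilon$, one finds $\partial_i(a^\epsilon_{ij}\partial_j v)=0$ with $a^\epsilon_{ij}=(\epsilon+|\nabla h_\epsilon|^2)^{\frac{p-2}{2}}\big(\delta_{ij}+(p-2)\tfrac{\partial_i h_\epsilon\,\partial_j h_\epsilon}{\epsilon+|\nabla h_\epsilon|^2}\big)$. The key point is that, although the scalar weight may degenerate, the ratio of the largest to the smallest eigenvalue of the bracketed matrix is at most $p-1$, independently of $\epsilon$ and of $\nabla h_\epsilon$. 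Testing the equation for $v$ against $v\zeta^2$, summing in $k$, and abbreviating $w:=\epsilon+|\nabla h_\epsilon|^2$, one derives a Caccioppoli-type inequality $\int|\nabla(w^{p/4})|^2\zeta^2\lesssim\int w^{p/2}|\nabla\zeta|^2$; a standard Moser iteration then gives $\sup_{B_{1/2}}|\nabla h_\epsilon|^p\lesssim |B_{5/8}|^{-1}\int_{B_{5/8}}(1+|\nabla h_\epsilon|^p)$, and a base-level Caccioppoli bound for $h_\epsilon$ combined with the $L^\infty$ estimate closes this into $\|\nabla h_\epsilon\|_{L^\infty(B_{1/2})}\le C\|h\|_{L^p(B_1)}$, with $C=C(n,p)$.

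The hard part is the second half: upgrading the Lipschitz bound to an $\epsilon$-uniform $C^{1,\alpha}$ bound, the obstruction being that the linearized operator $a^\epsilon$ loses uniform ellipticity precisely on $\{\nabla h_\epsilon=0\}$. I would run a De Giorgi oscillation-decay argument for the vector $\nabla h_\epsilon$ on dyadic balls $B_{2r}(y)\subset B_{1/2}$, distinguishing two regimes according to $\mu:=\sup_{B_{2r}}|\nabla h_\epsilon|$. In the \emph{nondegenerate} regime, where $|\nabla h_\epsilon|\ge\mu/2$ throughout $B_r$, the weight $w^{\frac{p-2}{2}}$ is comparable to $\mu^{p-2}$ on $B_r$, so the equation for each $\partial_k h_\epsilon$ is genuinely uniformly elliptic there with ellipticity ratio depending only on $p$, and the De Giorgi oscillation lemma yields $\mathrm{osc}_{B_{r/2}}\partial_k h_\epsilon\le\sigma\,\mathrm{osc}_{B_r}\partial_k h_\epsilon$ for some $\sigma=\sigma(n,p)<1$. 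In the \emph{degenerate} regime, where $|\nabla h_\epsilon|$ drops below $\mu/2$ somewhere in $B_r$, a measure-theoretic De Giorgi estimate applied to the subsolution $|\nabla h_\epsilon|^2$ forces $\sup_{B_{r/2}}|\nabla h_\epsilon|\le(1-\delta)\mu$ for some $\delta=\delta(n,p)>0$, i.e. geometric decay of the supremum — hence of the oscillation — from one scale to the next. Combining the two alternatives and iterating over dyadic scales gives $\mathrm{osc}_{B_\rho}\nabla h_\epsilon\le C(\rho/r)^{\alpha_p}\mathrm{osc}_{B_r}\nabla h_\epsilon$ for all $0<\rho<r$, with $\alpha_p,C$ depending only on $n$ and $p$. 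Feeding in the Lipschitz bound of the previous step converts this into $[\nabla h_\epsilon]_{C^{0,\alpha_p}(B_{1/2})}\le C\|h\|_{L^p(B_1)}$ uniformly in $\epsilon$, and letting $\epsilon\to0$ finishes the argument. The delicate point is the coherent bookkeeping in this last step — iterating the two regimes across all scales while keeping every constant independent of the regularization parameter — which is exactly the technical core of the classical references listed above.
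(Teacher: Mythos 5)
The paper does not prove Theorem \ref{thm reg p-harm}; it records it as a classical result and points the reader to the references listed just above it (DiBenedetto, Evans, Lewis, Tolksdorf, Uhlenbeck, Ural'tseva), so your proposal is measured against the classical proofs rather than against any argument in this article. Your overall architecture is the standard one: regularize with $w=\epsilon+|\nabla h_\epsilon|^2$, prove an $\epsilon$-uniform interior Lipschitz bound by differentiating the equation, deriving the Caccioppoli inequality $\int|\nabla(w^{p/4})|^2\zeta^2\lesssim\int w^{p/2}|\nabla\zeta|^2$, and Moser-iterating, and then upgrade to $C^{1,\alpha}$ via an oscillation-decay dichotomy. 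The first half — regularization, ellipticity ratio $\le p-1$ of the bracketed matrix when $p\ge2$, the Caccioppoli estimate for $w^{p/4}$, Moser iteration, and passage to the limit — is sound and is essentially Uhlenbeck/Tolksdorf/Lewis.

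There is, however, a genuine gap in the dichotomy you use to drive the oscillation decay. You split into a nondegenerate regime ($|\nabla h_\epsilon|\ge\mu/2$ throughout $B_r$) and a degenerate regime (``$|\nabla h_\epsilon|$ drops below $\mu/2$ somewhere in $B_r$''), and assert that in the second regime a De Giorgi estimate for the subsolution $|\nabla h_\epsilon|^2$ yields $\sup_{B_{r/2}}|\nabla h_\epsilon|\le(1-\delta)\mu$. That implication fails as stated: a single point where the gradient is small carries no measure information, and no De Giorgi lemma converts a pointwise inequality at one point into a quantitative decay of the supremum. The correct alternative is a \emph{measure} dichotomy — either $|\{|\nabla h_\epsilon|\le\mu/2\}\cap B_r|\ge\theta|B_r|$ for a fixed $\theta=\theta(n,p)$, or its complement. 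Only with a lower bound on the measure of the degenerate set can you apply the measure-to-pointwise De Giorgi lemma (say, to the nonnegative supersolution $\mu^2-|\nabla h_\epsilon|^2$) to get the sup decay. Moreover, even with the correct measure alternative, the linearized operator $a^\epsilon_{ij}$ degenerates wherever $|\nabla h_\epsilon|$ is small, so the standard uniformly elliptic De Giorgi machinery cannot be applied directly on $B_r$; handling this requires the intrinsic-scaling arguments that constitute the actual technical content of the classical proofs. You explicitly defer ``the coherent bookkeeping in this last step'' to the literature, so your proposal is best read as a correct outline with a misstated (but repairable) dichotomy rather than a self-contained proof; the part you wave away is exactly the part that makes the theorem nontrivial.
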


A particularly interesting approach was suggested by Lieberman in
\cite{lieberman}, where the re\-gu\-la\-ri\-ty theory for
$p$-harmonic functions is accessed through the following leading
integral oscillation decay lemma:

\begin{lemma}[Lieberman, \cite{lieberman}, Lemma 5.1] \label{l 2.3.5} Let $h$ be a $p$-harmonic function in $B_R \subset \mathbb{R}^n$. Then, for some positive constant $0< \alpha_p < 1$, there holds
$$
    \int_{B_r} \left |\nabla h(X) - \left (\nabla h \right)_r \right |^p dX \le  C  \left (\dfrac{r}{R} \right )^{n+p\alpha_p}  \int_{B_R} \left |\nabla h(X) - \left (\nabla h \right)_R \right |^p dX,
$$
where $C = C (n,p) > 0$ is a positive constant.
\end{lemma}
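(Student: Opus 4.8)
The strategy is to deduce the oscillation decay from the interior $C^{1,\alpha_p}$ estimate of Theorem~\ref{thm reg p-harm} by a Campanato-type iteration, the only delicate point being a correct \emph{homogeneous} normalization, forced upon us because the $p$-Laplacian is not invariant under the subtraction of affine functions. First I would normalize: since $x \mapsto \rho^{-1}h(\rho x + x_0)$ is again $p$-harmonic and both sides of the asserted inequality transform consistently under this scaling, it suffices to prove the estimate for $R=1$. For $\tfrac12 \le r \le 1$ the inequality is elementary — a direct computation bounds $|(\nabla h)_{B_r} - (\nabla h)_{B_1}|^p$ by $C(n)\,r^{-n}\int_{B_1}|\nabla h - (\nabla h)_{B_1}|^p\,dX$, and combining with the triangle inequality gives the claim with $C=C(n,p)$ — so from now on $r \le \tfrac12$.

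Abbreviate $\Phi(\rho) := \int_{B_\rho}|\nabla h - (\nabla h)_{B_\rho}|^p\,dX$. By Campanato's characterization of H\"older spaces, proving $\Phi(r) \le C\, r^{n+p\alpha_p}\Phi(1)$ for $r\le\tfrac12$ is equivalent to establishing
\begin{equation*}
    [\nabla h]_{C^{0,\alpha_p}(B_{1/2})}^p \le C\,\Phi(1),
\end{equation*}
possibly for a smaller, still universal, exponent which we keep denoting $\alpha_p$. The ``non-homogeneous'' version of this, namely $[\nabla h]_{C^{0,\alpha_p}(B_{1/2})}^p \le C\|\nabla h\|_{L^p(B_1)}^p$, follows at once from Theorem~\ref{thm reg p-harm} applied to $h-(h)_{B_1}$ together with Poincar\'e's inequality. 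It cannot, however, be upgraded by merely bounding $\|\nabla h\|_{L^p(B_1)}$ by $\Phi(1)^{1/p}$, which is false for affine $h$; hence the heart of the proof is to replace $\|\nabla h\|_{L^p(B_1)}$ by the oscillation $\Phi(1)^{1/p}$.

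This replacement I would carry out by a compactness/dichotomy argument on the size of $q := (\nabla h)_{B_1}$. If $|q| \lesssim \Phi(1)^{1/p}$, then $\|\nabla h\|_{L^p(B_1)} \lesssim |q| + \Phi(1)^{1/p} \lesssim \Phi(1)^{1/p}$ and the non-homogeneous estimate already closes the case. In the complementary regime $|q| \gg \Phi(1)^{1/p}$, the gradient $\nabla h$ stays close to the nonzero vector $q/|q|$ on most of $B_1$, and one passes to $\tilde h := h/|q|$, which is still $p$-harmonic by the $0$-homogeneity of $\Delta_p$ and satisfies $\|\nabla\tilde h - q/|q|\|_{L^p(B_1)} = \|\nabla h - q\|_{L^p(B_1)}/|q|$, comparably small. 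Near the unit vector $q/|q|$ the operator $\Delta_p$ is uniformly elliptic with coefficients as regular as $\nabla\tilde h$ itself, so the components of $\nabla\tilde h$ solve a linear equation to which the Schauder/Campanato decay for linear equations applies, yielding $[\nabla h]_{C^{0,\alpha_p}(B_{1/2})} \lesssim \|\nabla h - q\|_{L^p(B_{3/4})} \lesssim \Phi(1)^{1/p}$, as desired.

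The two regimes together give $\Phi(\tfrac12) \le \theta\,\Phi(1)$ for a universal $\theta=\theta(n,p) < 2^{-n}$ — equivalently, after rescaling, $\Phi(\tfrac{\rho}{2}) \le \theta\,\Phi(\rho)$ for every $\rho \le 1$ — and a routine dyadic iteration, choosing $\alpha_p$ so that $\theta = 2^{-(n+p\alpha_p)}$, produces $\Phi(r) \le C(n,p)\,r^{n+p\alpha_p}\Phi(1)$ for all $r\le\tfrac12$, which is the assertion. I expect the main obstacle to be the uniformly-elliptic regime of the dichotomy: one must extract the $C^{1,\alpha_p}$ decay of $\nabla\tilde h$ from the linearized equation with a constant that is \emph{quantitatively} controlled in terms of $1/|q|$, while the H\"older norm of the linearized coefficients is itself only controlled a posteriori — a circularity resolved precisely by running this step inside a normalized compactness argument rather than as a single estimate.
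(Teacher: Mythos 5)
A preliminary remark: the paper does not prove Lemma \ref{l 2.3.5} at all — it is imported verbatim from Lieberman \cite{lieberman} (Lemma 5.1 there) and used as a black box — so there is no in-paper argument to compare yours against; your sketch has to stand on its own as a proof of the cited decay estimate. Its architecture (scaling to $R=1$, the elementary range $r\ge 1/2$, and a dichotomy on $|q|=|(\nabla h)_{B_1}|$ versus the excess $\Phi(1)^{1/p}$) is the standard way such estimates are obtained, and the degenerate regime $|q|\lesssim\Phi(1)^{1/p}$ is complete as you present it, via Theorem \ref{thm reg p-harm} applied to $h-(h)_{B_1}$ and Poincar\'e.

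The genuine gap is the flat regime, which is the heart of the lemma and is only asserted. Uniform ellipticity of the linearized operator requires a pointwise bound $|\nabla\tilde h|\ge c>0$ on, say, $B_{3/4}$, and this does not follow from the $L^p$-smallness $\|\nabla\tilde h-e\|_{L^p(B_1)}\le 1/K$ alone. The missing (standard) step is to combine that smallness with the a priori bound $[\nabla\tilde h]_{C^{\alpha_p}(B_{3/4})}\le C\|\nabla\tilde h\|_{L^p(B_1)}\le C(1+1/K)$ from Theorem \ref{thm reg p-harm} and interpolate to get $\|\nabla\tilde h-e\|_{L^\infty(B_{3/4})}\le CK^{-\theta}$, then fix the dichotomy threshold $K=K(n,p)$ large; only after that may you differentiate the equation (legitimate in the nondegenerate region, where $\tilde h$ is smooth) and apply De Giorgi--Nash--Moser/Schauder to $\partial_i\tilde h-e_i$ to get $[\nabla\tilde h]_{C^{\beta}(B_{1/2})}\lesssim\|\nabla\tilde h-e\|_{L^p(B_{3/4})}$. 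You flag exactly this circularity but defer it to an unspecified ``normalized compactness argument'': that deferred step is the actual content of the proof. Two smaller defects: the closing iteration is both unnecessary (once $[\nabla h]_{C^{\beta}(B_{1/2})}^p\le C\Phi(1)$ you get $\Phi(r)\le Cr^{n+p\beta}\Phi(1)$ for all $r\le 1/2$ directly, and rescaling finishes) and, as written, unjustified, since $\Phi(1/2)\le\theta\Phi(1)$ with $\theta<2^{-n}$ does not follow when the constant $C$ is large — iterate at a fixed small scale $r_0$ instead if you insist on iterating; and the exponent your argument produces (limited by the De Giorgi exponent and the interpolation losses) is in general strictly smaller than the $\alpha_p$ of Theorem \ref{thm reg p-harm}. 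That satisfies the literal statement ``for some $0<\alpha_p<1$'', but not the paper's subsequent identification of this exponent with the sharp one, which is what feeds the asymptotically optimal exponent claimed in Theorem \ref{thm 1}.
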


In Lemma \ref{l 2.3.5} and throughout this article  we  use the classical average notation
$$
    (\psi)_r:=\intav{B_r(X_0)}  \psi dX := \frac{1}{|B_r(X_0)|}\int_{B_{r}(X_0)}  \psi dX.
$$

Local H\"older continuity for heterogeneous equations $\Delta_p
\xi = f$ can be delivered by means of Harnack inequality, which
will be another fundamental tool in our analysis.

\begin{theorem}[Harnack inequality, e.g. \cite{Serrin63}] \label{thm Harnack} Let
$\xi \in W^{1,p}(B_R)$, $\xi\ge 0$ a.e., satisfy $\Delta_p \xi = f$
in $B_R$ in the distributional sense, with $f \in L^{q}\left( B_{R}
\right)$ and $q>\frac{n}{p}$. Then, there exists a constant $C_r>0$
depending only on $n$, $q$, $p$ and $R-r$ such that
$$
    \sup_{B_{r}}\xi \leq C_r\left\lbrace \inf_{B_{r}} \xi  + \left ( r^{p-\frac{n}{q}}\Vert
    f \Vert_{L^{q}\left( B_{R}\right) } \right )^{\frac{1}{p-1}} \right\rbrace
$$
for all $0< r \leq R$.
\end{theorem}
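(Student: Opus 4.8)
This is the classical De Giorgi--Nash--Moser estimate, adapted to the $p$-Laplacian with an $L^q$ potential, $q>n/p$; the plan is to run Moser's iteration twice --- once on positive powers of the solution (after adding a suitable constant) to get a local sup bound, and once on negative powers together with $\log v$ to get a weak Harnack inequality --- and then to chain the two conclusions and globalize by a finite covering.

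First I would normalize. Put $k:=\big(r^{\,p-n/q}\,\Vert f\Vert_{L^q(B_R)}\big)^{1/(p-1)}$ and $v:=\xi+k\ge k>0$; since $\nabla v=\nabla\xi$, one still has $\Delta_p v=f$ distributionally, so $v$ is simultaneously a sub- and a supersolution. Rescaling by the radius $r$ (the rescaled function $v(x_0+r\,\cdot\,)$ solves $\Delta_p[v(x_0+r\,\cdot\,)]=r^p f(x_0+r\,\cdot\,)$, and the $L^q$ norm of that right-hand side on $B_1$ equals $r^{\,p-n/q}\Vert f\Vert_{L^q(B_r(x_0))}\le k^{\,p-1}$), it suffices to prove $\sup_{B_{1/4}}v\le C\inf_{B_{1/4}}v$ for nonnegative distributional solutions of $\Delta_p v=f$ in $B_1$ obeying the normalization $v\ge\Vert f\Vert_{L^q(B_1)}^{1/(p-1)}$. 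For $\beta>0$ I would test the equation with $\eta^p\Psi_\ell(v)$, where $\eta\in C_c^\infty(B_1)$ is a cutoff and $\Psi_\ell$ a Lipschitz truncation of $s\mapsto s^\beta$ (needed since $v^\beta\notin W^{1,p}$ a priori), which after Young's inequality gives a Caccioppoli estimate of the shape
$$
\int \eta^p\,v^{\beta-1}|\nabla v|^p\ \lesssim\ C(p,\beta)\!\int |\nabla\eta|^p\,v^{p+\beta-1}\ +\ \int |f|\,\eta^p\,v^{\beta}.
$$
The $f$-term is handled by the normalization: $|f|v^\beta\le k^{1-p}|f|v^{\beta+p-1}$, and H\"older with exponent $q$ produces $\Vert f\Vert_{L^q}\big\Vert\eta^p v^{\beta+p-1}\big\Vert_{L^{q'}}$; the hypothesis $q>n/p$ is exactly what yields $q'<p^\ast/p$, $p^\ast:=np/(n-p)$ (no restriction at all when $p\ge n$), so interpolation between $L^1$ and $L^{p^\ast/p}$ together with the Sobolev inequality applied to $g:=\eta\,v^{(\beta+p-1)/p}$ absorbs this term into the left-hand side, up to the lower-order integral $\int\eta^p v^{\beta+p-1}$, once $k^{p-1}$ exceeds a universal multiple of $\Vert f\Vert_{L^q(B_1)}$ --- which is precisely the normalization above. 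What remains is the homogeneous reverse H\"older inequality
$$
\Big(\int_{B_{\rho'}}v^{\chi t}\,dx\Big)^{1/\chi}\ \le\ C\,(\rho-\rho')^{-p}\int_{B_{\rho}}v^{t}\,dx,\qquad \tfrac14\le\rho'<\rho\le\tfrac12,
$$
with $\chi:=n/(n-p)>1$ (any fixed $\chi>1$ if $p\ge n$), valid for every $t>0$; iterating it over dyadic radii gives $\sup_{B_{1/4}}v\le C(n,p,s)\big(\int_{B_{1/2}}v^{s}\,dx\big)^{1/s}$ for every fixed $s>0$.

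Next I would prove the weak Harnack inequality. Taking $\beta=1-p$ in the same scheme gives $\int\eta^p|\nabla\log v|^p\lesssim\int|\nabla\eta|^p+(\text{absorbed }f\text{-term})\lesssim1$, hence $\log v\in\mathrm{BMO}(B_{3/4})$ with universal seminorm, and John--Nirenberg produces an exponent $s_0=s_0(n,p)>0$ with $\int_{B_{1/2}}v^{s_0}\,dx\cdot\int_{B_{1/2}}v^{-s_0}\,dx\le C$. Testing instead with $\eta^p v^{\beta}$ for $\beta<1-p$ (so $\beta+p-1<0$) and running the very same reverse H\"older iteration \emph{downward} in the exponent --- the $f$-term being absorbed exactly as before --- yields $\inf_{B_{1/4}}v\ge c\big(\int_{B_{1/2}}v^{-s_0}\,dx\big)^{-1/s_0}$. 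Combining with the sup bound at $s=s_0$,
$$
\sup_{B_{1/4}}v\ \le\ C\Big(\int_{B_{1/2}}v^{s_0}\,dx\Big)^{1/s_0}\ \le\ C\Big(\int_{B_{1/2}}v^{-s_0}\,dx\Big)^{-1/s_0}\ \le\ C\,\inf_{B_{1/4}}v.
$$
Since $v=\xi+k$, this reads $\sup_{B_{1/4}}\xi\le C(\inf_{B_{1/4}}\xi+k)$ on the normalized ball; undoing the scaling restores the weight $r^{\,p-n/q}$ inside $k$ with the same constant, and a finite chain of such balls covering $\overline{B_r}$ inside $B_R$ upgrades this to the stated estimate on an arbitrary $B_r\subset B_R$, the number of balls --- hence the constant $C_r$ --- depending only on $n$, $p$, $q$ and $R-r$.

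The iteration machinery is routine; the genuinely delicate point is the treatment of the potential $f$. One must verify that $q>n/p$ is \emph{precisely} the threshold under which $\int|f|\eta^p v^\beta$ can be absorbed --- via H\"older, interpolation and Sobolev --- into the good terms, and that the size this forces on the normalizing constant is exactly $k\sim\big(r^{\,p-n/q}\Vert f\Vert_{L^q(B_R)}\big)^{1/(p-1)}$, as claimed. A secondary technicality is justifying $v^\beta$ as a test function: for large $\beta>0$ this requires the Lipschitz truncation $\Psi_\ell$ and a monotone passage $\ell\to\infty$, whereas for $\beta<0$ it is legitimate directly, since $v\ge k>0$ makes $v^\beta\in L^\infty$ and $v^{\beta-1}|\nabla v|\in L^p_{\mathrm{loc}}$.
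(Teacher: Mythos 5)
The paper offers no proof of this theorem: it is stated as a quoted classical result and attributed to Serrin's work on quasilinear equations, whose argument is precisely the Moser iteration you outline. Your sketch (Caccioppoli estimates for powers of $v=\xi+k$, absorption of the $f$-term via H\"older, interpolation and Sobolev under the hypothesis $q>n/p$, the $\log v$ BMO/John--Nirenberg crossover, and the final chaining step) is the standard proof and is essentially correct; the only loose point is the covering argument at the end, where the length of the Harnack chain --- and hence the constant --- really depends on $r/(R-r)$ rather than on $R-r$ alone, an imprecision already present in the statement as quoted in the paper.
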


In the sequel, let us discuss some further inequalities that will
be used in the proofs of our main results. The estimates presented
herein have elementary character and are mostly known. We include
them for completeness purposes and courtesy to the readers.

\begin{lemma} \label{mon lemma} Let $\psi \in W^{1,p}(B_1)$, with $2\le p$ and $h \in W_\psi^{1,p}(B_1)$ solution to $\Delta_p h =0$ in $B_1$. Then, for a constant $c = c(n,p) > 0$, there holds
$$
    \int_{B_1} \left [ |\nabla \psi|^p - |\nabla h|^p \right ] dX \ge c \int_{B_1} |\nabla (\psi - h)|^p dX.
$$
\end{lemma}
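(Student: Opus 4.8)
The plan is to reduce everything to a pointwise convexity-type inequality for the vector field $\xi \mapsto |\xi|^{p-2}\xi$ and then integrate, using that $h$ is $p$-harmonic with boundary data $\psi$ so that the cross term vanishes after integration by parts. First I would recall the standard elementary inequality: for $p \ge 2$ there is a constant $c = c(n,p) > 0$ (in fact dimension-free, but no harm in keeping $n$) such that for all vectors $a, b \in \mathbb{R}^n$,
\begin{equation}\label{eq.plan.ptwise}
    |b|^p \ge |a|^p + p\,|a|^{p-2} a \cdot (b - a) + c\,|b-a|^p.
\end{equation}
This is the uniform convexity of $t \mapsto |t|^p$ for $p \ge 2$; one proof integrates the second derivative of $s \mapsto |a + s(b-a)|^p$ along the segment and bounds it below using $|a + s(b-a)|^{p-2} \ge$ a suitable multiple of $|b-a|^{p-2}$ on a subinterval of $[0,1]$, another splits into the cases $|b| \le 2|a|$ and $|b| > 2|a|$. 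I would just cite it as well known (e.g. it appears in Lindqvist's notes on the $p$-Laplacian) or give the two-line segment argument.

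Next I would apply \eqref{eq.plan.ptwise} pointwise with $a = \nabla h(X)$ and $b = \nabla \psi(X)$ and integrate over $B_1$:
\begin{equation}\label{eq.plan.integrated}
    \int_{B_1} |\nabla \psi|^p \, dX \ge \int_{B_1} |\nabla h|^p \, dX + p \int_{B_1} |\nabla h|^{p-2} \nabla h \cdot \nabla(\psi - h)\, dX + c \int_{B_1} |\nabla(\psi - h)|^p \, dX.
\end{equation}
The middle term is exactly $\int_{B_1} |\nabla h|^{p-2}\nabla h \cdot \nabla(\psi-h)\,dX$, which vanishes: since $\psi - h \in W_0^{1,p}(B_1)$ and $h$ solves $\Delta_p h = 0$ in the distributional sense, testing the weak formulation against $\psi - h$ gives precisely $\int_{B_1} |\nabla h|^{p-2}\nabla h\cdot \nabla(\psi - h)\,dX = 0$. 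Rearranging \eqref{eq.plan.integrated} yields the claim with the same constant $c = c(n,p)$.

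The only genuine point requiring care — the ``main obstacle,'' though it is mild — is justifying that $\psi - h$ is an admissible test function. This needs $h \in W_\psi^{1,p}(B_1)$, i.e. $h - \psi \in W_0^{1,p}(B_1)$, which is part of the hypothesis, together with the fact that $p$-harmonic functions here are understood in the distributional (weak) sense, so the Euler--Lagrange identity $\int |\nabla h|^{p-2}\nabla h \cdot \nabla \eta = 0$ holds for all $\eta \in W_0^{1,p}(B_1)$; one may need a density/approximation remark since the weak formulation is a priori stated for $\eta \in C_c^\infty$, but this is standard. Everything else is the pointwise inequality \eqref{eq.plan.ptwise} and a single integration by parts, so I would keep the write-up to a short paragraph invoking \eqref{eq.plan.ptwise} and the weak formulation.
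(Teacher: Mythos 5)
Your proof is correct. The underlying mechanism is the same as the paper's: uniform convexity of $\xi\mapsto|\xi|^p$ for $p\ge 2$, plus the weak Euler--Lagrange identity $\int_{B_1}|\nabla h|^{p-2}\nabla h\cdot\nabla\eta\,dX=0$ for $\eta=\psi-h\in W^{1,p}_0(B_1)$ to eliminate the linear term. The only difference is in the packaging. The paper interpolates $\psi_\tau=\tau\psi+(1-\tau)h$, applies the Fundamental Theorem of Calculus in $\tau$, inserts the $p$-harmonicity of $h$ to subtract off the cross term, and then invokes the first-order monotonicity inequality
$$\langle|\xi_1|^{p-2}\xi_1-|\xi_2|^{p-2}\xi_2,\ \xi_1-\xi_2\rangle\ge c(n,p)|\xi_1-\xi_2|^p,$$
finally integrating $\tau^{p-1}$ over $[0,1]$. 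You instead quote the equivalent second-order (Taylor-remainder) form
$$|b|^p\ge|a|^p+p|a|^{p-2}a\cdot(b-a)+c|b-a|^p,$$
plug in $a=\nabla h$, $b=\nabla\psi$, integrate, and kill the middle term with the weak formulation in one step. The two inequalities are standard equivalents (your segment-integration sketch of \eqref{eq.plan.ptwise} is essentially what the paper's FTC argument carries out), so this is the same proof in slightly more condensed form; either is acceptable, and your version is a bit shorter since it avoids the $\tau$-interpolation bookkeeping.
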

\begin{proof} For each $0\le \tau \le 1$, let $\phi_\tau$
denote the linear interpolation between $\psi$ and $h$, i.e.,
$\psi_\tau := \tau \psi + (1-\tau) h$. From Fundamental Theorem of
Calculus we have
\begin{equation}\label{lemma 2.2 - Eq01}
        \int_{B_1} \left ( |\nabla \psi|^p - |\nabla h|^p
        \right ) dX =\int_0^1 \dfrac{d}{d\tau} \left ( \int_{B_1} |\nabla \psi_\tau|^p  dX \right ) d\tau.
\end{equation}
Passing the derivative through and using the fact that $\div\left
( |\nabla h|^{p-2} \nabla h \right ) \cdot (\psi-h) = 0$ in $B_1$, we find
\begin{equation}\label{lemma 2.2 - Eq02}
    \begin{array}{lll}
        \displaystyle \int_0^1 \dfrac{d}{d\tau} \left ( \int_{B_1} | \nabla \psi_\tau|^p  dX \right ) d\tau &=& p\displaystyle \int_0^1 d\tau \int_{B_1} \left (   |\nabla \psi_\tau|^{p-2} \nabla \psi_\tau -  |\nabla h|^{p-2} \nabla h \right )  \cdot \nabla (\psi-h) dX \\
        &=& p\displaystyle \int_0^1 \dfrac{1}{\tau} d\tau \int_{B_1} \left (   |\nabla \psi_\tau|^{p-2} \nabla \psi_\tau -  |\nabla h|^{p-2} \nabla h \right ) \cdot \nabla (\psi_\tau-h) dX,
    \end{array}
\end{equation}
because $\psi_\tau - h = \tau (\psi-h)$. The Lemma now follows easily
from the well known classical monotonicity
\begin{equation}\label{lemma 2.2 - Eq03}
    \langle |\xi_1|^{p-2} \xi_1  - |\xi_2|^{p-2} \xi_2, \xi_1
    - \xi_2 \rangle > c(n,p)   |\xi_1 - \xi_2|^p,
\end{equation}
for any pair of vectors $\xi_1, \xi_2 \in \mathbb{R}^n$. In fact, combining \eqref{lemma 2.2 - Eq01}, \eqref{lemma 2.2 - Eq02} and \eqref{lemma 2.2 - Eq03} we reach
$$
    \displaystyle \int_{B_1} \left (\nabla \psi|^p - |\nabla h|^p
        \right )  \ge  \displaystyle c(n,p) \int_0^1 \tau^{-1} d\tau \int_{B_1} |\nabla (\psi_\tau-h)|^p   \\
         = \displaystyle c(n,p) \int_0^1 \tau^{p-1} d\tau \int_{B_1} |\nabla (\psi -h)|^p,
$$
and the Lemma follows.
\end{proof}
\begin{lemma}
\label{a 1}
Let $\gamma \in \left( 0, 1 \right)$. For any positive scalars $a > 0,~  b > 0$ there holds
\begin{eqnarray}
\left( a + b \right)^{\gamma} < \left( a^{\gamma} + b^{\gamma} \right).
\end{eqnarray}
\end{lemma}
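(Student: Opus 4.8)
The inequality $(a+b)^\gamma < a^\gamma + b^\gamma$ for $0<\gamma<1$ and $a,b>0$ is the classical subadditivity of the power function $t\mapsto t^\gamma$, and the cleanest route is to reduce it to a one-variable statement. First I would divide through by $(a+b)^\gamma$, which is legitimate since $a+b>0$, and set $s := a/(a+b) \in (0,1)$, so that $b/(a+b) = 1-s$. The claim then becomes $1 < s^\gamma + (1-s)^\gamma$ for every $s \in (0,1)$.

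To prove this reduced inequality, the key observation is that for $0<\gamma<1$ and any $t\in(0,1)$ one has $t^\gamma > t$: indeed, $t^\gamma / t = t^{\gamma-1} = t^{-(1-\gamma)} > 1$ since $0<t<1$ and $1-\gamma>0$. Applying this with $t=s$ and with $t=1-s$ — both of which lie strictly in $(0,1)$ — gives $s^\gamma > s$ and $(1-s)^\gamma > 1-s$. Adding these two strict inequalities yields
$$
    s^\gamma + (1-s)^\gamma > s + (1-s) = 1,
$$
which is exactly what we needed. Tracing back through the substitution, multiplying by $(a+b)^\gamma$ recovers $(a+b)^\gamma < a^\gamma + b^\gamma$.

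There is essentially no obstacle here; the only points requiring a word of care are that both $s$ and $1-s$ are \emph{strictly} between $0$ and $1$ (which holds precisely because $a>0$ and $b>0$, not merely $\ge 0$), so that the strict inequality $t^{-(1-\gamma)}>1$ is available, and that we are adding two strict inequalities rather than a strict and a non-strict one. An alternative, equally short, approach would be to fix $b>0$ and differentiate $g(a) := a^\gamma + b^\gamma - (a+b)^\gamma$ in $a$: one checks $g(0) = 0$ and $g'(a) = \gamma\big(a^{\gamma-1} - (a+b)^{\gamma-1}\big) > 0$ for $a>0$ since $t\mapsto t^{\gamma-1}$ is strictly decreasing and $a < a+b$; hence $g(a)>0$ for all $a>0$. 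I would present the first argument as it avoids calculus entirely.
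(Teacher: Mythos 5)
Your proof is correct, but it takes a genuinely different route from the paper's. The paper observes that $t^{\gamma-1} > (t+a)^{\gamma-1}$ for all $t>0$ (since $\gamma-1<0$) and integrates this pointwise inequality over $t\in(0,b)$, directly producing $b^\gamma > (a+b)^\gamma - a^\gamma$. Your primary argument instead normalizes by $(a+b)^\gamma$, reducing to the one-variable statement $s^\gamma + (1-s)^\gamma > 1$ for $s\in(0,1)$, and then uses the elementary fact $t^\gamma > t$ on $(0,1)$. Both are clean and fully elementary; the normalization trick has the small pedagogical advantage of making the strictness and the role of $a,b>0$ completely transparent, while the paper's integration argument is perhaps slightly more direct (no change of variables). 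Note also that your alternative argument — fixing $b$, checking $g(0)=0$ and $g'(a)>0$ — is essentially the differential form of the paper's integral argument: the paper integrates the very inequality $\gamma a^{\gamma-1} > \gamma(a+b)^{\gamma-1}$ that you recognize as $g'(a)>0$. So your second sketch recovers the paper's proof, while your first is a legitimate alternative.
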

\begin{proof}
In fact, just notice that, since $\gamma -1$ is negative, we have
\begin{eqnarray}\label{a 1 eq01}
t^{\gamma-1} > \left( t + a \right)^{\gamma-1}, \ \ \ \forall t \in (0, \infty).
\end{eqnarray}
Then, integrate \eqref{a 1 eq01} from $0$ to $b$ to obtain the
desired inequality.
\end{proof}
Next we prove two useful asymptotic inequalities.

\begin{lemma}\label{l.elem.asymp.ineq} Let $0 \le \mu < 1$ and suppose a real function $\phi$ verifies
$$
    \phi(r) \le A \left ( r^{e_1} \phi(r)^\mu + r^{e_2} \right ),
$$
for $r$ small enough. Then $\phi(r) = \mathrm{O} \left( r^{\min \left (e_2, \frac{e_1}{1-\mu} \right )}\right )$
as $r$ approaches zero.
\end{lemma}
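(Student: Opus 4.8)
The plan is to run a standard discrete iteration on a geometric sequence of radii and exploit the sub-unit exponent $\mu$ to absorb the nonlinear term. Fix a ratio $\rho \in (0,1)$ to be chosen, set $r_k := \rho^k r_0$ for a fixed small $r_0$, and write $\phi_k := \phi(r_k)$. I would like to show by induction that $\phi_k \le M r_k^{\beta}$ for $\beta := \min\{e_2, e_1/(1-\mu)\}$ and a suitable constant $M$ depending on $A$, $\rho$, $\mu$, the $e_i$'s and on $\phi_0 r_0^{-\beta}$. Plugging the induction hypothesis into the given inequality at radius $r_{k+1}$ (or more conveniently iterating the inequality as it stands) gives
\begin{equation}\label{iter-eq}
    \phi_{k+1} \le A\left( r_{k+1}^{e_1} (M r_{k+1}^{\beta})^{\mu} + r_{k+1}^{e_2} \right) = A M^{\mu} r_{k+1}^{e_1 + \mu \beta} + A r_{k+1}^{e_2}.
\end{equation}
The key elementary observation is that, by the very definition of $\beta$, one has both $e_2 \ge \beta$ and $e_1 + \mu\beta \ge \beta$ (the latter because $\beta \le e_1/(1-\mu)$ means $\beta(1-\mu) \le e_1$). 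Hence each term on the right of \eqref{iter-eq} is bounded by a constant times $r_{k+1}^{\beta}$, and the closing of the induction reduces to the single algebraic requirement $A M^{\mu} r_0^{e_1+\mu\beta - \beta} + A r_0^{e_2-\beta} \le M$, which, since $\mu<1$, can be met by first taking $M$ large (so that $AM^\mu \le M/2$) and then $r_0$ small — this is exactly where the hypothesis $\mu<1$ is used, so that $M^\mu$ grows slower than $M$.

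Once the dyadic estimate $\phi(r_k) \le M r_k^{\beta}$ is established, I would pass to arbitrary small $r$ by the usual interpolation between consecutive radii: given $r \in (r_{k+1}, r_k)$ we have $r_k \le \rho^{-1} r$, and the monotone-in-spirit bound gives $\phi(r) \le A(r^{e_1}\phi(r)^\mu + r^{e_2})$ with $\phi(r)$ controlled by $\phi(r_k)$ through one more application of \eqref{iter-eq}-type reasoning, yielding $\phi(r) \le C r^{\beta}$ with $C = C(A,\rho,\mu,e_1,e_2, M)$. This establishes $\phi(r) = \mathrm{O}(r^{\beta})$ as $r \to 0$, which is precisely the claim.

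The only genuinely delicate point — and the one I would be most careful about — is the self-referential nature of the hypothesis: $\phi(r)$ appears on both sides, so a priori one does not even know $\phi$ is finite or bounded near $0$. I would handle this by first noting that the inequality forces a crude bound: if $\phi(r) \ge 1$ then $\phi(r)^{1-\mu} \le A(r^{e_1} + r^{e_2}\phi(r)^{-\mu}) \le 2A$ for $r$ small, so $\phi$ is locally bounded near the origin, and then the clean induction above applies with $\phi_0 := \sup_{r \le r_0}\phi(r) < \infty$ as the seed. Modulo this preliminary boundedness remark, the argument is entirely routine geometric iteration; no monotonicity of $\phi$ is needed because the inequality is assumed to hold at every small $r$.
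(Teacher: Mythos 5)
Your geometric iteration on radii does not close, and the gap is in the displayed step \eqref{iter-eq}: to pass from $\phi_{k+1}\le A\bigl(r_{k+1}^{e_1}\phi_{k+1}^{\mu}+r_{k+1}^{e_2}\bigr)$ to $\phi_{k+1}\le A\bigl(r_{k+1}^{e_1}(Mr_{k+1}^{\beta})^{\mu}+r_{k+1}^{e_2}\bigr)$ you need $\phi_{k+1}\le Mr_{k+1}^{\beta}$, which is precisely what the step is supposed to prove. The induction hypothesis concerns $\phi_k=\phi(r_k)$, but the given inequality at $r_{k+1}$ relates $\phi(r_{k+1})$ only to itself; there is no coupling at all between $\phi$ at consecutive radii (unlike in the paper's companion Lemma~\ref{elem.ineq}, where the inequality genuinely links $\phi(\theta R)$ to $\phi(R)$ and a geometric iteration is the right tool). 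Your preliminary boundedness remark does fix the seed but does nothing to repair the induction step, so the proposal as written is circular.

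The irony is that the correct proof is essentially already contained in your ``preliminary remark,'' applied after normalizing. No iteration of any kind is needed: set $y(r):=\phi(r)/r^{\beta}$ with $\beta=\min\{e_2,\,e_1/(1-\mu)\}$. Dividing the hypothesis by $r^{\beta}$ and using that $e_2-\beta\ge 0$ and $(\beta-e_1)/\mu\le\beta$ (the latter is equivalent to $\beta\le e_1/(1-\mu)$) gives, for $r\le 1$,
\[
    y(r)\ \le\ A\Bigl(r^{e_1-\beta}\phi(r)^{\mu}+r^{e_2-\beta}\Bigr)
    \ \le\ A\Bigl(\bigl(\phi(r)/r^{\beta}\bigr)^{\mu}+1\Bigr)
    \ =\ A\bigl(y(r)^{\mu}+1\bigr).
\]
Exactly as in your final paragraph (with $y$ in place of $\phi$): if $y(r)>1$ then $y(r)^{1-\mu}\le 2A$, so $y(r)\le\max\{1,(2A)^{1/(1-\mu)}\}$ for every small $r$, which is $\phi(r)=\mathrm{O}(r^{\beta})$. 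This is the paper's argument. So: drop the dyadic machinery, promote your boundedness remark from a footnote to the proof, and run it on $y$ rather than $\phi$.
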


\begin{proof}
In fact, if $\phi(r) \lesssim r^{e_1} \phi(r)^\mu + r^{e_2}$, then for $\beta := \min \left (e_2, \frac{e_1}{1-\mu} \right ),$ there holds
$$
    \begin{array}{lll}
        \dfrac{\phi(r)}{r^\beta} &\lesssim& r^{e_1 - \beta} \phi(r)^\alpha + r^{e_2 - \beta} \\
        &\lesssim& \left (\dfrac{\phi(r)}{r^{\frac{\beta - e_1}{\mu}}} \right )^\mu + 1 \\
        &\lesssim& \left (\dfrac{\phi(r)}{r^{\beta}} \right )^\mu + 1,
    \end{array}
$$
since $\frac{\beta - e_1}{\alpha} \le \beta$, The above readily implies $\phi(r) = \text{O}(r^\beta)$ as claimed.
\end{proof}
\begin{lemma}
\label{elem.ineq}
Let $\phi(s)$ be a non-negative and non-decreasing function. Suppose that
\begin{eqnarray}
\phi \left( r \right) \leq C_{1} \left[ \left( \dfrac{r}{R}\right)^{\alpha} + \mu \right] \phi \left( R \right) + C_{2}R^{\beta}
\end{eqnarray}
for all $r \leq R \leq R_{0}$, with $C_{1}, \alpha, \beta$ positive constants and $C_{2}, \mu$ non-negative constants, $\beta < \alpha$. Then, for any $\sigma < \beta$, there exists a constant $\mu_{0}=\mu_{0}\left( C_{1}, \alpha, \beta, \sigma \right)$ such that if $\mu < \mu_{0}$, then for all $r \leq R \leq R_{0}$ we have
\begin{eqnarray}
\label{a 1.1}
\phi \left( r \right) \leq C_{3} \left( \dfrac{r}{R}\right)^{\sigma}\left[ \phi \left( R \right) + C_{2}R^{\sigma} \right]
\end{eqnarray}
where $C_{3}=C_{3}\left( C_{1}, \sigma - \beta   \right)$ is a positive constant. In turn,
\begin{eqnarray}
\label{a 1.1.1}
    \phi(r) \le C_{4} r^{\sigma},
\end{eqnarray}
where $C_{4}=C_{4}(C_{2}, C_{3}, R_{0}, \phi, \sigma)$ is a positive constant.
\end{lemma}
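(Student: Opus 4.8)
The plan is to follow the classical Campanato-type iteration argument used to upgrade a perturbative oscillation decay into a genuine Hölder bound, adapting the well-known scheme (see Giaquinta) to handle the extra additive term $C_2 R^\beta$. First I would fix $\sigma < \min\{\alpha,\beta\}$ and choose a scaling factor $0 < \rho < 1$ small enough that $2C_1 \rho^\alpha \le \rho^\sigma$, which is possible precisely because $\sigma < \alpha$; then I would pick $\mu_0 = \mu_0(C_1,\alpha,\beta,\sigma)$ so that $C_1(\rho^\alpha + \mu_0) \le \rho^\sigma$ whenever $\mu < \mu_0$. With $\mu$ thus constrained, the hypothesis evaluated at $r = \rho R$ gives
\begin{equation*}
    \phi(\rho R) \le \rho^\sigma \phi(R) + C_2 R^\beta.
\end{equation*}

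The core step is then to iterate this one-step decay over the geometric sequence $R_k := \rho^k R_0$. A routine induction shows
\begin{equation*}
    \phi(\rho^k R_0) \le \rho^{k\sigma}\phi(R_0) + C_2 R_0^\beta \sum_{j=0}^{k-1} \rho^{(k-1-j)\sigma}\rho^{j\beta},
\end{equation*}
and since $\sigma < \beta$ the geometric sum is bounded by a constant multiple of $\rho^{k\sigma}$ (factoring out $\rho^{(k-1)\sigma}$ leaves $\sum_j \rho^{j(\beta-\sigma)} \le (1-\rho^{\beta-\sigma})^{-1}$), giving $\phi(\rho^k R_0) \le C\rho^{k\sigma}(\phi(R_0) + C_2 R_0^\beta)$ with $C = C(C_1,\sigma-\min\{\alpha,\beta\})$ — the dependence on $\sigma - \min\{\alpha,\beta\}$ entering through $\rho$ and the summed geometric series. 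To pass from the discrete sequence $R_k$ to arbitrary radii $r \le R \le R_0$, given such $r,R$ I would locate $k$ with $\rho^{k+1}R \le r \le \rho^k R$, use monotonicity of $\phi$ to bound $\phi(r) \le \phi(\rho^k R)$, apply the iterated estimate with base radius $R$, and absorb the bounded factor $\rho^{-\sigma}$ into $C_3$; this yields precisely \eqref{a 1.1}. Finally \eqref{a 1.1.1} follows by specializing $R = R_0$ in \eqref{a 1.1}, so that $\phi(r) \le C_3 R_0^{-\sigma}(\phi(R_0) + C_2 R_0^\sigma)\, r^\sigma =: C_4 r^\sigma$ for all $r \le R_0$.

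The only delicate point — and the step I would be most careful about — is the bookkeeping in the geometric sum, namely verifying that the accumulated source contributions $\sum_{j} \rho^{(k-1-j)\sigma}\rho^{j\beta}$ do not degrade the rate below $\rho^{k\sigma}$; this works only because we chose $\sigma$ strictly below $\beta$, and it is exactly here that the constant $C_3$ must be allowed to blow up as $\sigma \uparrow \min\{\alpha,\beta\}$, since $(1-\rho^{\beta-\sigma})^{-1} \to \infty$ in that limit. Everything else is elementary: the smallness threshold $\mu_0$ depends only on how much room we left between $\sigma$ and $\alpha$ when selecting $\rho$, and the monotonicity hypothesis on $\phi$ is what makes the transition from dyadic-type radii to all radii automatic.
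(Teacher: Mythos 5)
Your proof is correct and follows essentially the same Campanato-type iteration as the paper: fix a geometric ratio so that one application of the hypothesis gives a clean one-step decay, iterate and sum the geometric series of source contributions, then pass from discrete radii to arbitrary radii by monotonicity. The only cosmetic difference is that the paper first reduces to $\sigma=\beta$ (assuming $\beta<\alpha$) and routes through an auxiliary exponent $\delta$ with $\beta<\delta<\alpha$, whereas you iterate directly at the target exponent $\sigma<\beta$; both give the same constants and the same conclusion.
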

\begin{proof}
It suffices to show the estimate for $\sigma = \beta$. For $0 < \theta < 1$ and $R \leq R_{0}$ we have
\begin{eqnarray}
\phi \left( \theta R \right) & \leq & C_{1} \left[ \left( \dfrac{\theta R}{R}\right)^{\alpha} + \mu \right] \phi \left( R \right) + C_{2}R^{\beta} \\ \nonumber
& = & \theta^{\alpha} C_{1} \left[ 1 + \mu \theta^{-\alpha} \right] \phi \left( R \right) + C_{2}R^{\beta}.
\end{eqnarray}
We choose $0 < \theta < 1$ such that $2C_{1}\theta^{\alpha} = \theta^{\delta}$ with $\beta < \delta < \alpha$. Now we take
$\mu_{0} > 0$ satisfying $\mu_{0} \theta^{-\alpha} < 1$. Thus we obtain for all $R \leq R_{0}$
\begin{eqnarray}
\phi \left( \theta R \right) & \leq & \theta^{\delta} \phi \left( R \right) + C_{2}R^{\beta}.
\end{eqnarray}
Inductively we get
\begin{eqnarray}
\phi \left( \theta^{k+1} R \right) & \leq & \theta^{\delta} \phi \left( \theta^{k}R \right) + C_{2}\theta^{k\beta}R^{\beta} \\ \nonumber
& \leq & \theta^{\left( k +1 \right)\delta} \phi \left( \theta^{k}R \right) + C_{2}\theta^{k\beta}R^{\beta}\sum_{i=1}^{k}\theta^{i\left( \delta - \beta \right)} \\ \nonumber
& \leq & C_{3} \theta^{\left( k +1 \right)\delta}\left[ \phi \left( R \right) + C_{2}R^{\beta} \right]
\end{eqnarray}
for all $k \in \mathbb{N}$. Hence, taking $k$ such that $\theta^{k+1} R \leq r \leq \theta^{k} R$ we obtain (\ref{a 1.1}).\\
Finally, we have
\begin{eqnarray}
\phi \left( r \right)& \leq & C_{3} \left( \dfrac{r}{R_{0}}\right)^{\sigma}\left[ \phi \left( R_{0} \right) + C_{2}R_{0}^{\sigma} \right] \\ \nonumber
& = &\left[ \frac{C_{3}}{R_{0}^{\sigma}}(\phi \left( R_{0} \right) + C_{2}R_{0}^{\sigma})\right] r^{\sigma}
\end{eqnarray}
which proves inequality (\ref{a 1.1.1}).

\end{proof}

\section{Existence and $L^\infty$ bounds of minimizers} \label{section exist and bound}

In this section we establish existence and pointwise bounds for a
minimum of the functional $\mathcal{J}_\gamma$. The arguments
presented herein works indistinctly for the cases $0 < \gamma \le
1$ and $\gamma = 0$.

\begin{theorem}[Existence and $L^\infty$ bounds]\label{l.bound}
Let $\Omega\subset\mathbb R^n$ be a bounded domain, $f \in L^q(\Omega)$, $q \ge  n$,  $\varphi \in W^{1,p}(\Omega)\cap L^\infty(\Omega)$ and  $0 < \lambda_{+} \not = \lambda_{-} < \infty$ be fixed. For each $0\le \gamma \le 1$, there exists a minimizer $u_\gamma$ to the energy functional
$$
    \mathcal{J}_\gamma(v):=\int_\Omega\left (|\nabla  v|^p+F_\gamma(v) + f(X)\cdot v  \right )dX,
$$
over $W^{1,p}_0 + \varphi$, where
$F_\gamma(v):=\lambda_+(v^+)^\gamma+\lambda_-(v^-)^\gamma$ and by
convention, $F_0(v):=\lambda_+ \chi_{\{v > 0 \}}
+\lambda_{-}\chi_{\{v \le 0 \}}.$ Furthermore, $u_\gamma$ is
bounded. More precisely,
$$
    \|u_\gamma\|_{L^\infty(\Omega)} \le C(n,p, \lambda_{+}, \lambda_{-}, \|\varphi\|_{L^\infty(\partial \Omega)},
    \|f\|_{L^q(\Omega)}).
$$
\end{theorem}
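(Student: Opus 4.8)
The plan is to establish existence via the direct method in the calculus of variations, and then derive the $L^\infty$ bound by a truncation/comparison argument. First I would check that the functional $\mathcal{J}_\gamma$ is bounded below on the admissible class $W^{1,p}_0(\Omega)+\varphi$. The gradient term $\int_\Omega |\nabla v|^p\,dX$ is nonnegative and coercive; the potential term $\int_\Omega F_\gamma(v)\,dX$ is nonnegative since $\lambda_\pm\ge 0$; and the linear term $\int_\Omega f\cdot v\,dX$ is controlled by $\|f\|_{L^q}\|v\|_{L^{q'}}$, which by Sobolev embedding (using $q\ge n$, hence $q'\le n/(n-1)\le p^*$ when $p\ge 2$, with the usual care for $p\ge n$) and Young's inequality is absorbed by a small multiple of $\|\nabla v\|_{L^p}^p$ plus a constant. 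After subtracting the fixed boundary datum, Poincar\'e's inequality gives coercivity of $\mathcal{J}_\gamma$ on the affine space. Taking a minimizing sequence $v_k$, coercivity yields a uniform $W^{1,p}$ bound, so up to a subsequence $v_k \rightharpoonup u_\gamma$ weakly in $W^{1,p}$ and strongly in $L^p$ (and a.e.) by Rellich--Kondrachov. The map $v\mapsto \int_\Omega|\nabla v|^p$ is weakly lower semicontinuous by convexity; the term $\int_\Omega F_\gamma(v)$ passes to the limit by a.e.\ convergence and Fatou (for $0<\gamma\le 1$, $F_\gamma$ is continuous; for $\gamma=0$, $F_0$ is only lower semicontinuous, but $t\mapsto \lambda_+\chi_{\{t>0\}}+\lambda_-\chi_{\{t\le 0\}}$ is lower semicontinuous when $\lambda_+>\lambda_-$, which suffices for Fatou-type lower semicontinuity of the integral); and $\int_\Omega f\cdot v_k \to \int_\Omega f\cdot u_\gamma$ by strong $L^{q'}$ convergence. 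Hence $\mathcal{J}_\gamma(u_\gamma)\le \liminf \mathcal{J}_\gamma(v_k)=\inf$, so $u_\gamma$ is a minimizer.

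For the $L^\infty$ bound, fix a level $\kappa := \|\varphi\|_{L^\infty(\partial\Omega)} + M$ where $M$ is to be chosen, and compare $u_\gamma$ against its truncation $w := \min\{u_\gamma, \kappa\} = u_\gamma - (u_\gamma-\kappa)^+$. Since $(u_\gamma-\kappa)^+ \in W^{1,p}_0(\Omega)$ (because $u_\gamma \le \varphi \le \kappa$ on $\partial\Omega$ in the trace sense), $w$ is admissible and minimality gives $\mathcal{J}_\gamma(u_\gamma)\le \mathcal{J}_\gamma(w)$. On the set $A_\kappa := \{u_\gamma > \kappa\}$ we have $\nabla w = 0$, $w = \kappa > 0$, so $F_\gamma(w) = \lambda_+\kappa^\gamma \le F_\gamma(u_\gamma)$ on $A_\kappa$ (since $u_\gamma>\kappa$ there and $t\mapsto\lambda_+ t^\gamma$ is nondecreasing), while $w\le u_\gamma$ there. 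Subtracting the common contributions away from $A_\kappa$, the inequality $\mathcal{J}_\gamma(u_\gamma)\le\mathcal{J}_\gamma(w)$ collapses to
$$
    \int_{A_\kappa} |\nabla u_\gamma|^p \, dX \le \int_{A_\kappa} \big(F_\gamma(w) - F_\gamma(u_\gamma)\big)\,dX + \int_{A_\kappa} f\cdot(w - u_\gamma)\,dX \le \int_{A_\kappa} |f|\,(u_\gamma - \kappa)\,dX.
$$
Setting $g := (u_\gamma-\kappa)^+ \in W^{1,p}_0(\Omega)$, this reads $\int_\Omega |\nabla g|^p\,dX \le \int_\Omega |f|\,g\,dX$. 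Now I would run the standard De Giorgi--Stampacchia iteration (or directly invoke the $L^\infty$ estimate for $W^{1,p}_0$ subsolutions of $\Delta_p g \ge -|f|$): using Sobolev on $g$, H\"older with exponent $q$ on the right, and the fact that $|A_\kappa|$ can be made small, one obtains a bound $\|g\|_{L^\infty(\Omega)}\le C(n,p,q,\|f\|_{L^q}, |\Omega|)$, which is precisely the claimed bound on $\sup_\Omega u_\gamma$ in terms of the stated data. The symmetric argument with truncation from below (comparing against $\max\{u_\gamma, -\kappa'\}$, using that $t\mapsto\lambda_-t^\gamma$ is nondecreasing so $F_\gamma(\max\{u_\gamma,-\kappa'\})\le F_\gamma(u_\gamma)$ on $\{u_\gamma<-\kappa'\}$) controls $\inf_\Omega u_\gamma$ from below, and together these give the two-sided $L^\infty$ bound.

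The main obstacle I anticipate is the handling of the $\gamma=0$ case in the lower semicontinuity step: because $F_0$ is genuinely discontinuous, one cannot simply pass to the limit in $\int_\Omega F_0(v_k)$, and the inequality $\lambda_+\chi_{\{t>0\}}+\lambda_-\chi_{\{t\le 0\}} \le \liminf (\lambda_+\chi_{\{t_k>0\}}+\lambda_-\chi_{\{t_k\le 0\}})$ requires the structural condition $\lambda_+ > \lambda_-$ (so that the ``jump up'' at $t=0$ is in the favorable direction for Fatou); this is exactly why the hypothesis $\lambda_- < \lambda_+$ is imposed. A secondary technical point is justifying that the linear term is subcritical for all $p\ge 2$ and $q\ge n$: when $p<n$ one needs $q' \le p^*$, i.e. $q \ge np/(np-n+p)$, which is automatic from $q\ge n$; when $p\ge n$ the embedding $W^{1,p}_0 \hookrightarrow L^s$ for every $s<\infty$ makes this trivial. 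These are routine but must be stated carefully to produce the asserted dependence of the constant on only $n,p,\lambda_\pm,\|\varphi\|_{L^\infty(\partial\Omega)},\|f\|_{L^q(\Omega)}$.
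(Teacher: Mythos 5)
Your proposal is correct and follows essentially the same route as the paper: the direct method with coercivity via Poincar\'e--H\"older--Young, weak $W^{1,p}$ compactness, lower semicontinuity of the Dirichlet energy together with a Fatou argument for $F_\gamma$ (exploiting $\lambda_+>\lambda_-$ to make $F_0$ lower semicontinuous at $0$), followed by truncation at levels above $\|\varphi\|_{L^\infty}$ and a Stampacchia-type iteration for the $L^\infty$ bound. The only cosmetic differences are that you phrase the $\gamma=0$ lower-semicontinuity step abstractly (LSC of the integrand plus Fatou) whereas the paper decomposes the sets $\{u\le 0\}$, $\{v_j>0\}$ explicitly, and you use one-sided truncations $\min\{u,\kappa\}$ and $\max\{u,-\kappa'\}$ in place of the paper's two-sided truncation $u_j$; both choices lead to the same energy inequality and the same invocation of the standard De Giorgi--Stampacchia / Ladyzhenskaya--Ural'tseva machinery.
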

\begin{proof}
Let us label
\begin{eqnarray*}
    I_{0}:= \min \left \{ \mathcal{J}_\gamma(v)  : v \in W^{1,p}_0 + \varphi \right \}.
\end{eqnarray*}
Initially we show that $I_0 > - \infty$. Indeed, for any $v \in
W^{1,p}_0 + \varphi$, by Poincar\'e inequality there exists a
positive constant $c=c(n, p,\Omega, \Vert f \Vert_{L^{q}})>0$ such
that
\begin{eqnarray}
\label{bounded of Lp norm}
c\Vert v \Vert^{p}_{L^{p}} - c\Vert \phi \Vert^{p}_{L^{p}} - \Vert \nabla \phi \Vert^{p}_{L^{p}} \leq  \Vert \nabla v \Vert^{p}_{L^{p}}.
\end{eqnarray}
By H\"older inequality, since
\begin{eqnarray}
q \geq n > \frac{p}{p-1},
\end{eqnarray}
we have
\begin{eqnarray*}
\label{}
\left | \int_{\Omega} f(X)v dX \right | \leq \Vert f \Vert_{L^{\frac{p}{p-1}}} \Vert v \Vert_{L^{p}}
\leq C_{1}(n,p,\Omega)\Vert f \Vert_{L^{q}} \Vert v \Vert_{L^{p}},
\end{eqnarray*}
which combined with (\ref{bounded of Lp norm}) gives
\begin{eqnarray*}
\label{}
- C - c\Vert \phi \Vert^{p}_{L^{p}} - \Vert \nabla \phi \Vert^{p}_{L^{p}} & \leq & \Vert \nabla v \Vert^{p}_{L^{p}} - C_{1}(n,p,\Omega)\Vert f \Vert_{L^{q}} \Vert v \Vert_{L^{p}}.
\end{eqnarray*}
Finally, we reach
\begin{eqnarray}
\label{th 2.1.a}
- C - c\Vert \phi \Vert^{p}_{L^{p}} - \Vert \nabla \phi \Vert^{p}_{L^{p}} \leq \Vert \nabla v \Vert^{p}_{L^{p}} - C_{1}(n,p,\Omega)\Vert f \Vert_{L^{q}} \Vert v \Vert_{L^{p}} \leq \mathcal{J}_\gamma(v).
\end{eqnarray}
\par
Let us now show existence of a minimum. Let $v_{j} \in
W^{1,p}_\phi(\Omega)$ be a minimizing sequence. For $j \gg 1$,
\begin{eqnarray*}
\mathcal{J}_\gamma (v_j)   & \leq & I_{0} + 1.
\end{eqnarray*}
From (\ref{th 2.1.a}) and  H\"older inequality we obtain
\begin{eqnarray}
\label{th 2.1.2} \int_{\Omega}\vert \nabla v_{j} \vert^{p} dX &
\leq & C \Vert v_{j} \Vert_{L^{p}} + I_{0} + 1.
\end{eqnarray}
By Poincar\'e inequality we estimate
\begin{eqnarray}
\label{th 2.1.3}
C \Vert v_{j} \Vert_{L^{p}} & \leq & C\left(\Vert \nabla v_{j} \Vert_{L^{p}} + \Vert \nabla \phi \Vert_{L^{p}} + \Vert \phi \Vert_{L^{p}} \right).
\end{eqnarray}
Also we have
\begin{eqnarray}
\label{th 2.1.4}
C\Vert \nabla v_{j} \Vert_{L^{p}} \leq C + \frac{1}{2} \Vert \nabla v_{j} \Vert^{p}_{L^{p}}.
\end{eqnarray}
Combining (\ref{th 2.1.2}), (\ref{th 2.1.3}) and (\ref{th 2.1.4})
we reach
\begin{eqnarray}
\label{bounded of Lp norm of gradient}
\int_{\Omega}\vert \nabla v_{j} \vert^{p} dX & \leq & C\left(\Vert \nabla \phi \Vert_{L^{p}} + \Vert \phi \Vert_{L^{p}} \right) + I_{0} + 1.
\end{eqnarray}
Thus, using  Poincar\'e inequality once more, we conclude that
$\{v_{j} - \phi\}$ is a bounded sequence in $W_0^{1,p}\left(
\Omega \right)$. By reflexivity, there is a function $u \in
W_{\phi}^{1,p}\left( \Omega \right)$ such that, up to a
subsequence,
\begin{eqnarray*}
v_{j} \rightarrow  u  \ \ \mbox{weakly} \ \mbox{in} \
W^{1,p}\left( \Omega \right), \quad v_{j} \rightarrow  u \ \
\mbox{in} \ L^{p}\left( \Omega \right), \quad v_{j} \rightarrow u
\ \ \text{a.  e.} \ \mbox{in} \ \Omega.
\end{eqnarray*}
From lower semicontinuity of norms, we readily obtain
\begin{eqnarray*}
\int_{\Omega} |\nabla u |^{p} dX & \leq & \liminf_{j \rightarrow \infty} \int_{\Omega}|\nabla v_{j} |^{p}dX.
\end{eqnarray*}
By pointwise convergence we have, in the case  $0 < \gamma \leq 1$,
\begin{eqnarray*}
\int_{\Omega} F_{\gamma}(u) + f(X)u dX &\leq & \liminf_{j \rightarrow \infty} \int_{\Omega}F_{\gamma}(v_{j}) + f(X)v_{j}dX.
\end{eqnarray*}
For $\gamma = 0$, recalling that we are working under the regime
$\lambda_{+} > \lambda_{-}$, we have,
\begin{eqnarray*}
\int_{\Omega} \lambda_{-}\chi_{\left\lbrace u \leq 0 \right\rbrace }dX & = & \int_{\left\lbrace u \leq 0 \right\rbrace} \lambda_{-}\chi_{\left\lbrace v_{j} > 0 \right\rbrace }dX + \int_{\left\lbrace u \leq 0 \right\rbrace} \lambda_{-}\chi_{\left\lbrace v_{j} \leq 0 \right\rbrace }dX \\ \nonumber
&\leq & \int_{\left\lbrace u \leq 0 \right\rbrace} \lambda_{+}\chi_{\left\lbrace v_{j} > 0 \right\rbrace }dX + \int_{\Omega} \lambda_{-}\chi_{\left\lbrace v_{j} \leq 0 \right\rbrace }dX.
\end{eqnarray*}
Thus,
\begin{eqnarray*}
\int_{\Omega} \lambda_{-}\chi_{\left\lbrace u \leq 0 \right\rbrace }dX&\leq & \liminf_{j \rightarrow \infty}\left( \int_{\left\lbrace u \leq 0 \right\rbrace} \lambda_{+}\chi_{\left\lbrace v_{j} > 0 \right\rbrace }dX + \int_{\Omega} \lambda_{-}\chi_{\left\lbrace v_{j} \leq 0 \right\rbrace }dX\right).
\end{eqnarray*}
On the other hand, since $v_{j} \rightarrow u  \ \ \text{a.  e.} \
\mbox{in} \ \Omega $, we have
\begin{eqnarray*}
\int_{\Omega} \lambda_{+}\chi_{\left\lbrace u > 0 \right\rbrace }dX & = & \int_{\left\lbrace u > 0 \right\rbrace} \lambda_{+}\left( \lim_{j \rightarrow \infty}\chi_{\left\lbrace v_{j} > 0 \right\rbrace }\right) dX \\ \nonumber
&=& \lim_{j \rightarrow \infty}\int_{\left\lbrace u > 0 \right\rbrace} \lambda_{+}\chi_{\left\lbrace v_{j} > 0 \right\rbrace }dX.
\end{eqnarray*}
Hence,
\begin{eqnarray*}
\int_{\Omega} F_{0}(u)dX \leq  \liminf_{j \rightarrow
\infty}\int_{\Omega} F_{0}(v_{j})dX .
\end{eqnarray*}
In conclusion,
\begin{eqnarray*}
\mathcal{J}_\gamma(u) &\leq & \liminf_{j \rightarrow \infty} \mathcal{J}_\gamma(v_j) = I_0,
\end{eqnarray*}
for $0 \le \gamma \le 1$, which proves the existence a minimizer.

\medskip

Let us now turn our attention to $L^\infty$ bounds of  $u_\gamma
$, which hereafter in  this proof we will only refer as $u$. Let
us label
$$
    j_{_{0}}  := \left \lceil \sup_{\partial \Omega}\phi  \right \rceil,
$$
that is, the smallest natural number above $\sup_{\partial \Omega}\phi$.    For each $j \geq j_{0}$ we define the truncated function $u_{j}: \Omega \rightarrow \mathbb{R}$ by
\begin{eqnarray}
\label{th 2.2.6}
u_{j}= \left \{
\begin{array}{ccl}
j\cdot \mbox{sing}(u)  &\mbox{if}& |u| > j \\
u  &\mbox{if}&  |u| \leq j.
\end{array}
\right.
\end{eqnarray}
where $\mbox{sing}(u) = 1$ if $u \geq 0$ and $\mbox{sing}(u) = -1$
else. If we denote $A_{j} := \left\lbrace |u| > j \right\rbrace$,
we have, for each $j > j_0$
\begin{eqnarray}
u = u_{j} \ \ \mbox{in} \ \ A^{c}_{j}\ \ \mbox{and} \ \ u_{j} = j
\cdot \mbox{sing}(u)  \ \ \mbox{in} \ \ A_{j}.
\end{eqnarray}
Thus, by minimality of $u$, there holds, for $0 < \gamma \leq 1$,
\begin{eqnarray}
\label{th 2.2.7}
\int_{A_{j}} \vert \nabla u \vert^{p} dX & = &  \int_{\Omega}  \vert \nabla u \vert^{p} - \vert \nabla u_{j} \vert^{p} dX \\ \nonumber
& \leq &  \int_{A_{j}}  f \left(  u_{j} - u \right) dX + \int_{A_{j}}  \lambda_{+} \left( (u_{j}^+)^\gamma - (u^+)^\gamma \right) dX \\ \nonumber
& + & \int_{A_{j}}  \lambda_{-} \left( (u_{j}^-)^\gamma - (u^-)^\gamma \right) dX.
\end{eqnarray}
Notice that
\begin{eqnarray}
\label{}
\int_{A_{j}}  f \left( u_{j} - u \right) dX &=& \nonumber \int_{A_{j} \cap \left\lbrace u > 0\right\rbrace }  f \left(  j - u \right) dX + \int_{A_{j} \cap \left\lbrace u \leq 0\right\rbrace }  f \left( u - j \right) dX \\ \nonumber
& \leq & 2\int_{A_{j}} |f| \left( |u| - j \right) dX.
\end{eqnarray}
Moreover, we have
\begin{eqnarray}
\label{}
\lambda_{+} \int_{A_{j}}\left( (u_{j}^+)^\gamma - (u^+)^\gamma \right) dX & = & \nonumber \lambda_{+}\int_{A_{j} \cap \left\lbrace u > 0 \right\rbrace}\left( j^\gamma - |u|^\gamma \right) dX \\ \nonumber
& + & \lambda_{+}\int_{A_{j} \cap \left\lbrace u \leq 0\right\rbrace}\left( (-j)^{+})^\gamma - (u^{+})^\gamma \right) dX \\ \nonumber
& \leq & 0
\end{eqnarray}
and
\begin{eqnarray}
\label{}
\lambda_{-} \int_{A_{j}}\left( (u_{j}^-)^\gamma - (u^-)^\gamma \right) dX & = & \nonumber \lambda_{-}\int_{A_{j} \cap \left\lbrace u > 0 \right\rbrace}\left( (j)^{-})^\gamma - (u^{-})^\gamma \right) dX \\ \nonumber
& + & \lambda_{-}\int_{A_{j} \cap \left\lbrace u \leq 0\right\rbrace}\left( j^\gamma - |u|^\gamma \right) dX \\ \nonumber
& \leq & 0.
\end{eqnarray}
Then, we find
\begin{eqnarray}
\label{}
\int_{A_{j}} F_{\gamma}(u_{j}) - F_{\gamma}(u)\leq  0.
\end{eqnarray}
For $\gamma = 0$ it suffices to notice that $u_{j} > 0$ and $u$
have the same sign. From the range of truncation we consider, it
follows  that $\left( |u| - j \right)^{+} \in W_{0}^{1,p}\left(
\Omega \right)$. Hence, applying H\"older inequality and
Gagliardo-Nirenberg inequality, we find
\begin{eqnarray}
\label{th 2.2.9}
\int_{A_{j}} \vert f \vert  \left( |u| - j \right)^{+}dX & \leq & \nonumber \Vert f \Vert_{L^{\frac{p}{p - 1}}} \Vert  \left( |u| - j \right)^{+} \Vert_{L^{p}\left( A_{j}\right)} \\ \nonumber
& \leq & \Vert f \Vert_{L^{q}} | A_{j}|^{1 - \frac{1}{p^{*}} - \frac{1}{q}} \Vert  \nabla u \Vert_{L^{p}\left( A_{j}\right)},
\end{eqnarray}
where $p^{*}:= \frac{np}{n-p}$. Young inequality gives,
\begin{eqnarray}
\label{th 2.2.10}
\Vert f \Vert_{L^{q}} | A_{j}|^{1- \frac{1}{p*} - \frac{1}{q}} \Vert \nabla u \Vert_{L^{p}\left( A_{j}\right)} \leq C | A_{j} |^{\frac{p}{p-1} - \frac{p}{q(p-1)} - \frac{p}{p*\left( p-1\right)}} + \frac{1}{2}\Vert \nabla u \Vert_{L^{p}\left( A_{j} \right)}^{p} .
\end{eqnarray}
Combining (\ref{th 2.2.7}) and (\ref{th 2.2.10}) we obtain
\begin{eqnarray}
\int_{A_{j}} \vert \nabla u \vert^{p} dX \leq C | A_{j}|^{1 - \frac{p}{n} + \varepsilon},
\end{eqnarray}
where $\varepsilon = \frac{p(pq - n)}{nq\left( p - 1 \right) }$ and (see (\ref{bounded of Lp norm}) and (\ref{bounded of Lp norm of gradient}) substituting $I_{0}$ by $\mathcal{J}_\gamma(\varphi)$)
\begin{eqnarray}
\Vert u \Vert_{L^{1}\left( A_{j_{0}} \right)} \leq | A_{j_{0}} |^{\frac{p-1}{p}}\Vert u \Vert_{L^{p}\left( A_{j_{0}} \right)} \leq C.
\end{eqnarray}
Boundedness of $u$ now follows from a general machinery, see for instance,  \cite{ON}, Chap. 2, Lemma 5.2, Page 71.
\end{proof}

\begin{remark}
A  consequence of $L^\infty$ estimates for a minimum $u$ to the
functional $\mathcal{J}_{\gamma}$ is the universal control of $u$
in $W^{1,p}$. In fact, we have
\begin{eqnarray}
\int_{\Omega} |\nabla u|^{p} dx & \leq &  \mathcal{J}_\gamma(\varphi) - \int_{\Omega} F_{\gamma}(u)dX + \int_{\Omega} |f(X)||u|dX \\ \nonumber
&\leq &\mathcal{J}_\gamma(\varphi) + C(n,p,\Omega,\Vert f \Vert_{L^{q}})\\ \nonumber
&\leq &  C,
\end{eqnarray}
where $C=C(n,p,\Omega,\varphi,\Vert f \Vert_{L^{q}})>0$ is a
positive constant. Here we used the elementary inequality
$t^{\gamma} \leq \max\left\lbrace 1, t\right\rbrace$, for $t>0$
and $0\le \gamma \le 1$. In conclusion,
\begin{eqnarray}
\label{boundedness W^{1,p}}
\Vert u \Vert_{W^{1,p}} \leq C.
\end{eqnarray}

\end{remark}

\medskip

We close up this Section by stating the Euler-Lagrange equation
associated to the functional $\mathcal{J}_\gamma$, $0\le \gamma
\le 1$ as well as the flux balance -- also known as the free
boundary condition -- satisfied by a minimum $u_0$ to
$\mathcal{J}_0$, through the free boundary. The proofs of this
facts are rather standard and we omit them here.

\begin{proposition}\label{PDE} Let $u_\gamma$ be a minimum to the functional $\mathcal{J}_\gamma$, $0\le \gamma \le 1$. Then $u_\gamma$ solves
\begin{equation} \label{PDE for min}
    \Delta_p u=\frac{\gamma}{p}\left ( \lambda_+(u^+)^{\gamma-1}\chi_{\{u>0\}}-
    \lambda_-(u^-)^{\gamma-1}\chi_{\{u\le0\}}\right ) + \frac{1}{p} f(X) \quad \text{in}
    \quad\Omega,
\end{equation}
in the distributional sense. Also, if $u_0$ is a minimum of
$\mathcal{J}_0$, with $|\{u_0 = 0 \}| = 0$, $f \in L^q\left(
\Omega \right)$, $q >n$, $X_0 \in \mathfrak{F}^{+}(u_0) \cup
\mathfrak{F}^{-}u_0)$ a generic free boundary point and $B$ a ball
centered at $X_0$. Then for any $\Phi \in C^1_0(B, \mathbb{R}^n)$,
there holds
$$
    \begin{array}{l}
   \displaystyle  \lim\limits_{\epsilon_1 \searrow 0} \int\limits_{B \cap \{u_0 = \epsilon_1\}} \langle\left ((p-1)|\nabla u_0|^p - \lambda_{+} \right ) \nu_1, \Phi \rangle d\mathcal{H}^{n-1}   \\
    +  \\
    \displaystyle  \lim\limits_{\epsilon_2 \nearrow 0} \int\limits_{B \cap \{u_0 = \epsilon_2\}} \langle \left ((p-1)|\nabla u_0|^p - \lambda_{-} \right ) \nu_2 , \Phi \rangle d\mathcal{H}^{n-1} \\
    = 0,
    \end{array}
$$
where $\nu_1$ and $\nu_2$ denote the outward normal vector on $B
\cap \{u_0 = \epsilon_1\}$ and $B \cap \{u_0 = \epsilon_2\}$
respectively. In particular, the flux balance
$$
    |\nabla u_0^{+}|^p - |\nabla u_0^{-}|^p = \dfrac{1}{p-1} \left (\lambda_{+} - \lambda_{-} \right ),
$$
holds along any $C^{1,\alpha}$ piece of the free boundary.
\end{proposition}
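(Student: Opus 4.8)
The plan is to prove Proposition \ref{PDE} in two parts: the distributional PDE for general $0\le\gamma\le 1$, and the weak flux balance through the free boundary for $\gamma=0$. For the first part, I would take a test function $\eta\in C_0^\infty(\Omega)$ and, for $\gamma>0$, use the variation $v_t:=u_\gamma+t\eta$; the map $t\mapsto\mathcal{J}_\gamma(v_t)$ has a minimum at $t=0$, so I differentiate under the integral sign. The gradient term produces $p\int|\nabla u_\gamma|^{p-2}\nabla u_\gamma\cdot\nabla\eta\,dX$, the source term produces $\int f\eta\,dX$, and the potential term $F_\gamma$ differentiates to $\gamma\bigl(\lambda_+(u_\gamma^+)^{\gamma-1}\chi_{\{u_\gamma>0\}}-\lambda_-(u_\gamma^-)^{\gamma-1}\chi_{\{u_\gamma<0\}}\bigr)\eta$; one must justify passing the derivative through, which for $0<\gamma<1$ requires a dominated-convergence argument near $\{u_\gamma=0\}$ using that $(u^\pm)^{\gamma-1}$ is integrable against the difference quotient on the set where $u_\gamma\neq 0$ — this is where the hypothesis $|\{u_0=0\}|=0$ is implicitly harmless since $\gamma>0$ makes $F_\gamma$ continuous, though some care with one-sided perturbations (positive $t$ vs negative $t$) is needed because of the sublinearity at the origin. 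Setting the derivative to zero and dividing by $p$ gives \eqref{PDE for min}. For $\gamma=0$ the potential $F_0$ is merely lower semicontinuous, so inner variations (domain perturbations $X\mapsto X+t\Phi(X)$) rather than outer variations are the appropriate tool, and the distributional equation $\Delta_pu_0=\frac1p f$ holds in $\Omega\setminus\mathfrak{F}_0$, which is what is actually being asserted there.

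For the flux balance, the natural route is the domain-variation (Pohozaev-type / stationarity) identity. I would fix a generic free boundary point $X_0$ and a ball $B$ centered there, take $\Phi\in C_0^1(B,\mathbb{R}^n)$, and consider the family of diffeomorphisms $\Phi_t(X)=X+t\Phi(X)$, which for small $|t|$ are admissible since they fix $\partial\Omega$. Writing $v_t:=u_0\circ\Phi_t^{-1}\in W^{1,p}_0+\varphi$ and using $\frac{d}{dt}\big|_{t=0}\mathcal{J}_0(v_t)=0$, the change of variables produces the classical stress-energy identity: the gradient term contributes $\int_\Omega\bigl(|\nabla u_0|^p\,\mathrm{div}\,\Phi-p|\nabla u_0|^{p-2}(\nabla u_0\cdot D\Phi\,\nabla u_0)\bigr)dX$, the source term contributes $\int_\Omega\bigl(f\,\mathrm{div}\,\Phi+(\nabla f\cdot\Phi)\bigr)u_0\,dX$ (or its distributional analogue when $f$ is only $L^q$, handled by approximation), and the potential term, because $F_0(v_t)=F_0(u_0)\circ\Phi_t^{-1}$ is just a relabeling, contributes $\int_\Omega F_0(u_0)\,\mathrm{div}\,\Phi\,dX=\lambda_+\int\chi_{\{u_0>0\}}\mathrm{div}\,\Phi+\lambda_-\int\chi_{\{u_0\le0\}}\mathrm{div}\,\Phi$. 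Away from the free boundary $u_0$ solves the PDE, so integrating by parts on $\{u_0>\epsilon_1\}$ and on $\{u_0<\epsilon_2\}$ and letting $\epsilon_1\searrow 0$, $\epsilon_2\nearrow 0$, all the interior bulk terms cancel by the Euler--Lagrange equation, leaving exactly the two boundary integrals over the level sets $\{u_0=\epsilon_i\}$ plus the contributions of $\lambda_\pm\,\mathrm{div}\,\Phi$ converted (via the coarea formula and $\chi_{\{u_0>0\}}\mathrm{div}\,\Phi=\mathrm{div}(\chi_{\{u_0>0\}}\Phi)$ in the sense of sets of finite perimeter, or simply by integrating by parts against the characteristic functions) into surface integrals of $-\lambda_+\langle\nu_1,\Phi\rangle$ and $-\lambda_-\langle\nu_2,\Phi\rangle$; collecting terms yields precisely the displayed identity with integrand $\bigl((p-1)|\nabla u_0|^p-\lambda_\pm\bigr)\langle\nu_i,\Phi\rangle$.

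Finally, to extract the pointwise flux balance along a $C^{1,\alpha}$ portion $\Gamma$ of $\mathfrak{F}_0$: on such a piece, $u_0$ is $C^{1,\alpha}$ up to $\Gamma$ from each side by standard regularity for the $p$-Laplacian with $L^q$ right-hand side ($q>n$), so the traces $|\nabla u_0^+|$ and $|\nabla u_0^-|$ exist classically, the limits $\epsilon_i\to 0$ in the level-set integrals converge to genuine surface integrals over $\Gamma$ with $\nu_1=-\nu_2=\nu$ the inward normal to $\{u_0>0\}$, and the identity becomes $\int_\Gamma\bigl((p-1)|\nabla u_0^+|^p-(p-1)|\nabla u_0^-|^p-(\lambda_+-\lambda_-)\bigr)\langle\nu,\Phi\rangle\,d\mathcal{H}^{n-1}=0$ for all $\Phi$; the fundamental lemma of the calculus of variations then gives the balance $|\nabla u_0^+|^p-|\nabla u_0^-|^p=\frac{1}{p-1}(\lambda_+-\lambda_-)$ on $\Gamma$. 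I expect the main obstacle to be the rigorous justification of the level-set boundary integrals and their limits — controlling the flux through $\{u_0=\epsilon\}$ uniformly as $\epsilon\to 0$ requires knowing enough non-degeneracy and regularity of $u_0$ near $\mathfrak{F}_0$ (precisely the sort of linear growth and $C^{1,\alpha}$ estimates developed elsewhere in the paper), together with care in handling the $L^q$ source $f$ in the inner-variation identity via mollification and passage to the limit; by contrast the outer-variation derivation of \eqref{PDE for min} for $\gamma>0$ is routine.
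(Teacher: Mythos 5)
The paper declines to prove this Proposition, stating only that ``the proofs of these facts are rather standard and we omit them here,'' so I can only assess the soundness of your plan rather than compare it to a reference proof.

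Your plan is sound and follows the canonical route: outer variations $v_t=u_\gamma+t\eta$ for the distributional PDE, and domain (inner) variations with the $p$-stress-energy tensor $T_{ij}:=|\nabla u_0|^p\delta_{ij}-p|\nabla u_0|^{p-2}\partial_iu_0\,\partial_ju_0$ for the flux balance, exactly in the spirit of Alt--Caffarelli and Alt--Caffarelli--Friedman. You also correctly flag the two genuine subtleties: for $0<\gamma<1$ the one-sided difference quotients of $(u^\pm)^\gamma$ on $\{u_\gamma=0\}$ have a fixed sign, so minimality at $t=0$ still yields \eqref{PDE for min} with $\chi_{\{u>0\}}$, $\chi_{\{u<0\}}$ in place and no mass on $\{u_\gamma=0\}$; and for $\gamma=0$ the equation \eqref{PDE for min} must be read away from $\mathfrak{F}_0$, since $\Delta_p u_0$ picks up a singular measure across the free boundary.

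One simplification you should notice: you do not need to mollify $f$ or invoke $\nabla f$. Differentiate $v_t(X)=u_0(\Phi_t^{-1}(X))$ directly rather than changing variables in the source integral, so that $\tfrac{d}{dt}\big|_{t=0}\int f\,v_t\,dX=-\int f\,\nabla u_0\cdot\Phi\,dX$. Since $\partial_jT_{ij}=-p\,(\Delta_p u_0)\,\partial_iu_0=-f\,\partial_iu_0$ on $\{u_0\ne 0\}$, integrating $\int T_{ij}\partial_j\Phi^i$ by parts over $\{u_0>\epsilon_1\}\cup\{u_0<\epsilon_2\}$ produces a bulk term converging (here is where $|\{u_0=0\}|=0$ enters) to $+\int_B f\,\nabla u_0\cdot\Phi\,dX$, which cancels the source contribution exactly. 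What survives are precisely the two level-set integrals of $(T_{ij}\nu_j+\lambda_\pm\nu_i)\Phi^i=\bigl((1-p)|\nabla u_0|^p+\lambda_\pm\bigr)\nu_i\Phi^i$ (using $\nu=-\nabla u_0/|\nabla u_0|$ on $\{u_0=\epsilon_1\}$), which is the displayed identity up to an overall sign. The remaining obstacle you identify -- rigorously passing to the limit in those level-set integrals -- is real and requires the non-degeneracy and $C^{1,\alpha}$ estimates that the rest of the paper is devoted to establishing.
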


\section{Sharp $C^{1,\alpha}$ estimates for minima} \label{s. proof thm 1}

This Section is devoted to the proof of Theorem \ref{thm 1}, which
assures optimal H\"older continuity estimates for the gradient of
minima of the energy functional $\mathcal{J}_\gamma$, for $0< \gamma
\le 1$ and $q > n$. The borderline situation $\gamma = 0$ and $f \in
L^n$ will be addressed in the next Section.

Hereafter in this Section, $u = u_\gamma$ denotes a minimizer of the
functional $\mathcal{J}_\gamma$, with $0 < \gamma \le 1$. Theorem
\ref{thm 1} concerns an optimal interior regularity result;
therefore, in order to prove such interior estimate, we fix an
arbitrary point $X_0\in\Omega$ and $R>0$ such that
$R<\dist(X_0,\partial\Omega)$. We will show that $u \in
C^{1,\alpha}$ at $X_0$, for $\alpha$ as in  \eqref{sharp reg intro}.

In the sequel we show the first main step in our strategy to obtain sharp regularity theory for minima of the energy $\mathcal{J}_\gamma$.

\begin{lemma}[Comparison with $p$-harmonic functions] \label{key osc lemma p-harm} Let $u \in W^{1,p}(B_R)$ and $h  \in W^{1,p}(B_R)$ satisfy $\Delta_p h = 0$ in $B_R$ in the distributional sense. Then,  there exists a positive constant $C = C(n,p) > 0$ depending  on dimension and $p$  such that for each $0 < r \le R$, there holds
\begin{eqnarray}
 \int_{B_r} \left |\nabla u(X) - \left (\nabla u \right)_r \right |^p dX & \le & C  \left (\dfrac{r}{R} \right )^{n+p\alpha_p}  \int_{B_R} \left |\nabla u(X) - \left (\nabla u \right)_R \right |^p dX \\ \nonumber
& + & C\int_{B_R} |\nabla u(X) - \nabla h(X)|^p dX ,
\end{eqnarray}
where $0< \alpha_p < 1$ is the optimal exponent in Lemma \ref{l 2.3.5}, which, in turn, reveals the sharp $C^{1,\alpha}$ estimate stated in Theorem \ref{thm reg p-harm}.
\end{lemma}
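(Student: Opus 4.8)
The plan is to use Lemma \ref{l 2.3.5} applied to the $p$-harmonic comparison function $h$, and then transfer the oscillation decay from $h$ back to $u$ via the triangle inequality, controlling the error by the $L^p$ distance $\|\nabla u - \nabla h\|_{L^p(B_R)}$. First I would fix $0 < r \le R$ and write, for any vector $\xi \in \mathbb{R}^n$,
$$
\int_{B_r} |\nabla u - (\nabla u)_r|^p \, dX \le 2^{p-1}\int_{B_r} |\nabla u - \xi|^p \, dX,
$$
using that $(\nabla u)_r$ is the $L^2$-average but in fact minimizes the $L^p$-deviation only up to a dimensional constant; more precisely, since $Y \mapsto \fint_{B_r}|\nabla u - Y|^p$ is minimized over constants $Y$ at some $\xi_0$, and $|(\nabla u)_r - \xi_0|^p$ is controlled by averages, one gets $\int_{B_r}|\nabla u - (\nabla u)_r|^p \le C(n,p)\int_{B_r}|\nabla u - (\nabla h)_r|^p$. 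Choosing $\xi = (\nabla h)_r$ and splitting $\nabla u - (\nabla h)_r = (\nabla u - \nabla h) + (\nabla h - (\nabla h)_r)$, I obtain
$$
\int_{B_r} |\nabla u - (\nabla u)_r|^p \, dX \le C\int_{B_r}|\nabla u - \nabla h|^p\,dX + C\int_{B_r}|\nabla h - (\nabla h)_r|^p \, dX.
$$

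Next I would estimate the second term on the right by Lemma \ref{l 2.3.5}:
$$
\int_{B_r}|\nabla h - (\nabla h)_r|^p\,dX \le C\left(\frac{r}{R}\right)^{n+p\alpha_p}\int_{B_R}|\nabla h - (\nabla h)_R|^p\,dX,
$$
and then pass from $h$ back to $u$ on the large ball $B_R$ by the same triangle-inequality device: writing $\nabla h - (\nabla h)_R = (\nabla h - \nabla u) + (\nabla u - (\nabla u)_R) + ((\nabla u)_R - (\nabla h)_R)$ and noting that $|(\nabla u)_R - (\nabla h)_R|^p \le \fint_{B_R}|\nabla u - \nabla h|^p$, I get
$$
\int_{B_R}|\nabla h - (\nabla h)_R|^p\,dX \le C\int_{B_R}|\nabla u - (\nabla u)_R|^p\,dX + C\int_{B_R}|\nabla u - \nabla h|^p\,dX.
$$
Since $(r/R)^{n+p\alpha_p} \le 1$, the extra $\|\nabla u - \nabla h\|_{L^p(B_R)}^p$ terms generated at both scales can be absorbed into the single error term $C\int_{B_R}|\nabla u - \nabla h|^p\,dX$ appearing in the statement, and combining the displays yields exactly the claimed inequality with a constant $C = C(n,p)$.

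The only genuinely delicate point is the elementary lemma that the $L^p$-mean deviation $\int_{B_\rho}|g - (g)_\rho|^p$ is comparable (up to $C(n,p)$) to $\inf_{\xi \in \mathbb{R}^n}\int_{B_\rho}|g-\xi|^p$; this is standard — one direction is trivial, and the other follows because $|(g)_\rho - \xi| = |\fint_{B_\rho}(g-\xi)| \le (\fint_{B_\rho}|g-\xi|^p)^{1/p}$ by Jensen, so $\int_{B_\rho}|g-(g)_\rho|^p \le 2^{p-1}(\int_{B_\rho}|g-\xi|^p + |B_\rho||(g)_\rho-\xi|^p) \le 2^p\int_{B_\rho}|g-\xi|^p$. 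I do not expect any real obstacle here; the content of the lemma is entirely in Lieberman's decay estimate Lemma \ref{l 2.3.5}, and the present lemma is the routine—but essential—perturbation step that will later be combined with an estimate of $\|\nabla u - \nabla h\|_{L^p}$ in terms of the energy defect of $u$ against its $p$-harmonic replacement (via Lemma \ref{mon lemma}) to run an iteration of Lemma \ref{elem.ineq} type.
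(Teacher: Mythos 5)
Your proof is correct and follows essentially the same route as the paper: apply Lieberman's oscillation decay to the $p$-harmonic replacement $h$, and transfer it to $u$ at both scales via triangle inequalities plus the Jensen estimate $|(\nabla u)_\rho-(\nabla h)_\rho|^p\le\fint_{B_\rho}|\nabla u-\nabla h|^p$, absorbing the resulting $\|\nabla u-\nabla h\|_{L^p(B_R)}^p$ terms. The only cosmetic difference is that you package the first step as the general "quasi-minimality of averages" observation rather than carrying out the specific splitting by hand, but the estimates are identical.
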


\begin{proof}
For each $r \in \left( 0, R \right]$ we estimate,
\begin{eqnarray}
\label{l 2.3.1}
\int_{B_r} \left |\nabla u(X) - \left( \nabla u \right)_r \right |^p dX  & \leq &  C_p \int_{B_r} \left |\nabla u(X) - \left( \nabla h \right)_{r} \right |^{p} dX \\ \nonumber
& + & C_p \int_{B_r} \left |\left( \nabla u \right)_r  - \left( \nabla h \right)_{r} \right|^{p} dX ,
\end{eqnarray}
for a constant $C_p$ that depends only on $p$. Analogously, we obtain
\begin{eqnarray}
\label{l 2.3.2}
\int_{B_r} \left |\nabla u(X) - \left( \nabla h \right)_r \right |^p dX  & \leq & C_p  \int_{B_r} \left |\nabla u(X) - \nabla h(X) \right |^{p} dX \\ \nonumber
& + & C_{p}\int_{B_r} \left |\nabla h(X) - \left( \nabla h\right)_{r} \right |^{p} dX .
\end{eqnarray}
In the sequel, we apply  H\"older inequality and estimate
\begin{eqnarray}
\label{l 2.3.3}
\int_{B_r} \left |\left( \nabla u \right)_r  - \left( \nabla h\right)_{r} \right |^{p} dX &=& \dfrac{1}{\left |B_{r} \right |^{p-1}} \left | \int_{B_r} \left ( \nabla u(X) - \nabla h(X) \right ) dX \right |^{p} \\ \nonumber
&\leq & \dfrac{1}{\left |B_{r} \right |^{p-1}}\left( \int_{B_r} \left |\nabla u(X) - \nabla h(X) \right | dX\right)^{p} \\ \nonumber
&\leq & \dfrac{1}{\left |B_{r} \right |^{p-1}}\left\lbrace \left |B_{r} \right |^{1-\frac{1}{p}} \left( \int_{B_r} \left |\nabla u(X) - \nabla h(X) \right |^{p} dX\right)^{\frac{1}{p}} \right\rbrace^{p} \\ \nonumber
&=& \int_{B_r} \left |\nabla u(X) - \nabla h(X) \right |^{p} dX.
\end{eqnarray}
Combining (\ref{l 2.3.1}), (\ref{l 2.3.2}) and (\ref{l 2.3.3})  we obtain
\begin{eqnarray}
\label{l 2.3.4}
\int_{B_r} \left |\nabla u(X) - \left (\nabla u \right)_r \right |^p dX  & \leq & C_p \int_{B_r} \left |\nabla h(X) - \left( \nabla h\right)_{r} \right |^{p} dX \\ \nonumber
& + & C_{p}\int_{B_r} \left |\nabla u(X) - \nabla h(X) \right |^{p} dX .
\end{eqnarray}
Interplaying the roles of $u$ and $h$ in (\ref{l 2.3.4}) and arguing in the bigger ball $B_R$, we find
\begin{eqnarray}
\label{l 2.3.7}
\int_{B_R} \left |\nabla h(X) - \left (\nabla h \right)_R \right |^p dX  & \leq &  C_p \int_{B_R} \left |\nabla u(X) - \left( \nabla u\right)_{R} \right |^{p} dX \\ \nonumber
& + & C_{p}\int_{B_R} \left |\nabla u(X) - \nabla h(X) \right |^{p} dX .
\end{eqnarray}
Now, in view of Lemma \ref{l 2.3.5} and \eqref{l 2.3.4} we can further estimate
\begin{equation}
\label{l 2.3.6}
    \begin{array}{lll}
        \displaystyle \int_{B_r} \left |\nabla u(X) - \left (\nabla u \right)_r \right |^p dX  &\leq&  \displaystyle  C(n,p)  \left (\dfrac{r}{R} \right )^{n+p\alpha_p}\int_{B_R} \left | \nabla h(X) - \left( \nabla h\right)_{R} \right |^{p} dX  \\
        &+&\displaystyle   C(n,p) \int_{B_R} \left |\nabla u(X) - \nabla h(X) \right |^{p} dX.
    \end{array}
\end{equation}
Hence, combining (\ref{l 2.3.7}) and (\ref{l 2.3.6}) we readily obtain
\begin{eqnarray}
\label{l 2.3.8}
\int_{B_r} \left |\nabla u(X) - \left (\nabla u \right)_r \right |^p dX  & \leq &  C(n,p) \left( \dfrac{r}{R} \right )^{n+p\alpha_p}\int_{B_R} \left |\nabla u(X) - \left( \nabla u\right)_{R} \right |^{p} dX \\ \nonumber
& + & C(n,p) \left[ 1 + \left( \dfrac{r}{R} \right )^{n+p\alpha_p} \right] \int_{B_R} \left |\nabla u(X) - \nabla h(X) \right |^{p} dX,
\end{eqnarray}
which finally implies
\begin{eqnarray}
\label{l 2.3.9}
\int_{B_R} \left |\nabla u(X) - \left (\nabla u \right)_r \right |^p dX & \le &  C \left (\dfrac{r}{R} \right )^{n+p\alpha_p}  \int_{B_R} \left |\nabla u(X) - \left (\nabla u \right)_R \right |^p dX \\ \nonumber
& + & C\int_{B_R} |\nabla u(X) - \nabla h(X)|^p dX ,
\end{eqnarray}
and the proof of Lemma \ref{key osc lemma p-harm} is concluded.
\end{proof}

\bigskip
We have now gathered all the tools and ingredients we need to
establish local H\"older continuity of the gradient of a minimum
of the energy functional $\mathcal{J}_\gamma$, $0< \gamma \le 1$.

\bigskip

\noindent \textit{Proof of Theorem \ref{thm 1}.} We start off the
proof by denoting, for writing convenience, $B_R:= B_R(X_0)$ and
$u = u_\gamma$ a given minimum of the functional
$\mathcal{J}_\gamma$, $0 < \gamma \le 1$. Let $h$ be the
$p$-harmonic function in $B_R$ that agrees with $u$ on the
boundary, i.e.,
$$
\Delta_p h = 0 \text{ in } B_R \quad \text{and} \quad h-u\in W^{1,p}_{0}(B_R).
$$
By Lemma \ref{key osc lemma p-harm} we have
\begin{eqnarray}
\label{t 2.6.1}
\int_{B_r} \left |\nabla u(X) - \left (\nabla u \right)_r \right |^p dX & \le & C  \left (\dfrac{r}{R} \right )^{n+p\alpha_{p}}  \int_{B_R} \left |\nabla u(X) - \left (\nabla u \right)_R \right |^p dX \\ \nonumber
& + & C\int_{B_R} |\nabla u(X) - \nabla h(X)|^p dX .
\end{eqnarray}
On the other hand, by the minimality of $u$
we have
\begin{equation}\label{ineq.minimum}
\int_{B_R}\big(|\nabla u|^p-|\nabla h|^p\big)dX\leq
\int_{B_R}\big(F_\gamma(h)-F_\gamma(u)\big)dX + \int_{B_R} f(X)(h - u)dX.
\end{equation}
Invoking Lemma \ref{mon lemma}, there exists a constant
$C_3=C_3(p,n)>0$ such that
\begin{equation}\label{ineq.p2}
C_3\int_{B_R}\big(|\nabla u|^p-|\nabla h|^p\big)dX\geq
\int_{B_R}|\nabla(u-h)|^pdX.
\end{equation}
Moreover, we have
\begin{eqnarray*}
\int_{B_R}F_\gamma(h)-F_\gamma(u)dX = \lambda_{+}\int_{B_R} \left[ \left( h^{+} \right)^{\gamma} - \left( u^{+} \right)^{\gamma} \right]dX + \lambda_{-}\int_{B_R} \left[ \left( h^{-} \right)^{\gamma} - \left( u^{-} \right)^{\gamma} \right]dX
\end{eqnarray*}
with
\begin{eqnarray}
\int_{B_R} \left[ \left( h^{+} \right)^{\gamma} - \left( u^{+} \right)^{\gamma} \right]dX & \leq & \int_{\left\lbrace h^{+} > u^{+} \right\rbrace}  \left[ \left( h^{+} \right)^{\gamma} - \left( u^{+} \right)^{\gamma} \right]dX \\ \nonumber
& = & \int_{\left\lbrace h^{+} > u^{+} \right\rbrace \cap \left\lbrace  u^{+} > 0 \right\rbrace}  \left[ \left( h^{+} \right)^{\gamma} - \left( u^{+} \right)^{\gamma} \right]dX \\ \nonumber
& + & \int_{\left\lbrace h^{+} > u^{+} \right\rbrace \cap \left\lbrace u^{+} = 0 \right\rbrace} \left( h^{+} - u^{+} \right)^{\gamma}dX.
\end{eqnarray}
Notice furthermore that
\begin{eqnarray}
 \int_{\left\lbrace h^{+} > u^{+} \right\rbrace \cap \left\lbrace u^{+} = 0 \right\rbrace} \left( h^{+} - u^{+} \right)^{\gamma}dX \leq \int_{\left\lbrace h^{+} > u^{+} \right\rbrace \cap \left\lbrace u^{+} = 0 \right\rbrace} \left( h - u \right)^{\gamma}dX.
\end{eqnarray}
By Lemma \ref{a 1}  there holds
\begin{eqnarray}
\int_{\left\lbrace h^{+} > u^{+} \right\rbrace \cap \left\lbrace u^{+} > 0 \right\rbrace}  \left[ \left( h^{+} \right)^{\gamma} - \left( u^{+} \right)^{\gamma} \right]dX & \leq & \int_{\left\lbrace h^{+} > u^{+} \right\rbrace \cap \left\lbrace u^{+} > 0 \right\rbrace}\left( h^{+} - u^{+} \right)^{\gamma}dX \\ \nonumber
& = & \int_{\left\lbrace h^{+} > u^{+} \right\rbrace \cap \left\lbrace  u^{+} > 0 \right\rbrace}\left( h - u \right)^{\gamma}dX \\ \nonumber
& \leq & \int_{B_R}| h - u |^{\gamma} dX.
\end{eqnarray}
Analogously, we obtain
\begin{eqnarray}
\int_{B_R} \left[ \left( h^{-} \right)^{\gamma} - \left( u^{-}
\right)^{\gamma} \right]dX  & \leq  & \int_{\left\lbrace h^{-} >
u^{-} \right\rbrace \cap \left\lbrace  u^{-} > 0 \right\rbrace}
\left[ \left( h^{-} \right)^{\gamma} - \left(
u^{-}\right)^{\gamma} \right]dX \\ \nonumber & + &
\int_{\left\lbrace h^{-} > u^{-} \right\rbrace \cap \left\lbrace
u^{-} = 0 \right\rbrace} \left( u - h \right)^{\gamma}dX,
\end{eqnarray}
with
\begin{eqnarray}
\int_{\left\lbrace h^{-} > u^{-} \right\rbrace \cap \left\lbrace u^{-} > 0 \right\rbrace} \left[ \left( h^{-} \right)^{\gamma} - \left( u^{-}\right)^{\gamma} \right]dX & \leq & \int_{\left\lbrace h^{-} > u^{-} \right\rbrace \cap \left\lbrace  u^{-} > 0 \right\rbrace}\left( h^{-} - u^{-}\right)^{\gamma}dX \\ \nonumber
& = & \int_{\left\lbrace h^{-} > u^{-} \right\rbrace \cap \left\lbrace u^{-} > 0 \right\rbrace}\left(  u - h \right)^{\gamma}dX \\ \nonumber
& \leq & \int_{B_R}|h - u|^{\gamma}dX .
\end{eqnarray}
Hence, we find
\begin{eqnarray}
\label{t 2.6.2}
\int_{B_R}F_\gamma(h)-F_\gamma(u)dX \leq C\int_{B_R}|h - u|^{\gamma}dX ,
\end{eqnarray}
where $C=C\left( \lambda_{+}, \lambda_{-} \right)$ is a positive constant.

\medskip

Combining \eqref{ineq.p2}, \eqref{ineq.minimum} and employing
H\"older inequality followed by Poincar\'e inequality and (\ref{t 2.6.2}) we obtain
\begin{equation}\label{ineq.pb2}
\begin{split}
\int_{B_R}|\nabla(u-h)|^pdX&\leq C_3\int_{B_R}F_\gamma(h)-F_\gamma(u)dX\\
&\leq C_4\int_{B_R}|u-h|^\gamma dX\\ \nonumber
&\leq
C_5\Big(\int_{B_R}|\nabla(u-h)|^pdX\Big)^{\gamma/p}|B_R|^{1+\gamma/n-\gamma/p},
\end{split}
\end{equation}
where $C_4$ and $C_5$ depend on $p$, $n$, $\lambda_+$ and
$\lambda_-$. Thus, by Young inequality we reach the following estimate
\begin{eqnarray}\label{ineq.p3}
\int_{B_R}F_\gamma(h)-F_\gamma(u)dX & \leq & \nonumber C\left( p, \gamma \right) \big[C(p,n,\lambda_+,\lambda_-)\big]^{p/(p-\gamma)}
|B_R|^{1+1/n(p\gamma/(p-\gamma))} \\ \nonumber
& + & \dfrac{1}{4}\Vert \nabla(u-h) \Vert^{p}_{L^{p}} \\
& \leq & C(p)\big[C(p,n,\lambda_+,\lambda_-)\big]^{p/(p-1)}
|B_R|^{1+1/n(p\gamma/(p-\gamma))} \\ \nonumber
& + & \dfrac{1}{4}\Vert \nabla(u-h) \Vert^{p}_{L^{p}},
\end{eqnarray}
where $C\left( p, \gamma \right)=\left(
\frac{4\gamma}{p}\right)^{\frac{\gamma}{p-\gamma}}\left( \frac{p -
\gamma}{p}\right)$ and $C\left( p \right)=\left(
\frac{4}{p}\right)^{\frac{1}{p-1}}$. H\"older inequality and
Poincar\'e inequality yield
\begin{eqnarray}
\label{ineq.p3.1}
\int_{B_R} f(X)(h - u)dX & \leq & \Vert f \Vert_{L^{q}}|B_R|^{\frac{p-1}{p} - \frac{1}{q}}\Vert u-h \Vert_{L^{p}} \\ \nonumber
& \leq & \Vert f \Vert_{L^{q}}|B_R|^{\frac{p-1}{p} - \frac{1}{q} + \frac{1}{n}}\Vert \nabla(u-h) \Vert_{L^{p}}.
\end{eqnarray}
Thus, applying  Young inequality once more, we reach
\begin{eqnarray}
\label{ineq.p3.2} \int_{B_R} f(X)(h - u)& \leq & C(p)(\Vert f
\Vert_{L^{q}})^{\frac{p}{p-1}}|B_R|^{\frac{p}{p-1}(\frac{p-1}{p} -
\frac{1}{q} + \frac{1}{n})}\Vert \nabla(u-h) \Vert^{p}_{L^{p}}\\
\nonumber & + &\dfrac{1}{4}\Vert \nabla(u-h) \Vert_{L^{p}} \\
\nonumber & = &  C(p)(\Vert f
\Vert_{L^{q}})^{\frac{p}{p-1}}|B_R|^{1 + \frac{1}{n}\left[
\frac{(q-n)p}{(p-1)q}\right]} + \dfrac{1}{4}\Vert \nabla(u-h)
\Vert_{L^{p}}.
\end{eqnarray}
Replacing \eqref{ineq.p3}  and \eqref{ineq.p3.2} in \eqref{t 2.6.1} we easily obtain
\begin{eqnarray}\label{ineq.endlemma}
 \int_{B_r}|\nabla u-(\nabla u)_r|^pdX & \leq & \nonumber C(n,p, \alpha_{p})\left (\dfrac{r}{R} \right )^{n+p\alpha_{p}}  \int_{B_R} \left |\nabla u(X) - \left (\nabla u \right)_R \right |^p dX \\ \nonumber
& + & C(n,p, \alpha_{p})C(n,p,\lambda_+,\lambda_-)
|B_R|^{1+1/n(p\gamma/(p-\gamma))} \\ \nonumber
& + & C(n,p, \alpha_{p})(\Vert f \Vert_{L^{q}})^{\frac{p}{p-1}}|B_R|^{1 + \frac{1}{n}\left[ \frac{(q-n)p}{(p-1)q}\right]} \\ \nonumber
& \leq & C\left (\dfrac{r}{R} \right )^{n+p\alpha_{p}}  \int_{B_R} \left |\nabla u(X) - \left (\nabla u \right)_R \right |^p dX \\ \nonumber
& + & C R^{n+p\gamma/(p-\gamma)} + CR^{n+p\frac{(q-n)}{(p-1)q}}.
\end{eqnarray}
where $C=C(n,p,\lambda_{+}, \lambda_{-},\alpha_{p}, \Vert f \Vert_{L^{q}})$ is a positive constant. In view of Lemma \ref{elem.ineq} and $W^{1,p}$ bounds of $u$ we conclude
\begin{equation}\label{eq end thm 1}
 \displaystyle\intav{B_r(X_0)}|\nabla u-(\nabla u)_r|^pdX \leq  C\left ( n,p,\lambda_{+}, \lambda_{-}, \|f\|_{L^q(\Omega)}, \text{dist}(X_0, \partial \Omega) \right )  \cdot r^\alpha,
\end{equation}
for $\alpha$ entitled in \eqref{sharp reg intro}.  Finally Campanato's embedding Theorem (see for instance \cite{maly-ziemer-book}) gives the desired H\"older continuity of the gradient of $u$. The proof of Theorem \ref{thm 1} is complete. \qed

\bigskip

\begin{remark} \label{rmk sct4} It is important to notice that the estimates from Campanato's embedding Theorem are not uniform as $\gamma $ goes to zero. In fact, an inspection of the proof of such Theorem (see for instance \cite{maly-ziemer-book} Theorem 1.54) reveals that estimate \eqref{eq end thm 1} implies 
$$
	|\nabla u(X) - \nabla u(Y)| \le \dfrac{2^n \cdot C\left ( n,p,\lambda_{+}, \lambda_{-}, \|f\|_{L^q(\Omega)}, \text{dist}(X_0, \partial \Omega) \right ) }{2^\alpha - 1} |X-Y|^\alpha.
$$
 This is the reason why the constant in Theorem \ref{thm 1} do depend upon $\gamma$, even though the universal constant appearing in \eqref{eq end thm 1} does not depend upon $\gamma$. 
\end{remark}


\section{Log-Lipschitz estimates} \label{s. proof thm 2}

In this Section we address sharp regularity for jets and cavities
type problems, i.e., $\gamma = 0$, with sources in the conformal
threshold case $f \in L^n(\Omega)$, where $n$ is the dimension of
the ambient. Hereafter $u = u_0$ denotes a minimizer of the energy
functional
\begin{equation}\label{cav functional}
    \mathcal{J}_0(v) := \int_\Omega\left (|\nabla
    v|^p+ \lambda_+ \chi_{\{v > 0 \}} +\lambda_{-}\chi_{\{v \le 0 \}} + f(X)\cdot  v \right )dX,
\end{equation}
for scalars  $0 \le \lambda_{-} < \lambda_{+} < \infty$. Existence
and pointwise bounds for $u_0$ is has been assured by Theorem
\ref{l.bound}.

\bigskip

\textit{Proof of Theorem \ref{thm 2}.} We start off by fixing an
arbitrary point $X_0\in\Omega$ and $R>0$ such that
$R<\dist(X_0,\partial\Omega)$. As before, we denote
$B_R:=B_R(X_0)$. We follow the initial steps of the proof of
Theorem \ref{thm 1}. Let $h$ be the $p$-harmonic function in $B_R$
that agrees with $u$ on the boundary, i.e.,
$$
\Delta_p h = 0 \text{ in } B_R \quad \text{and} \quad h-u\in W^{1,p}_{0}(B_R).
$$
By Lemma \ref{key osc lemma p-harm} we have
\begin{eqnarray}
\label{t 5.1}
\int_{B_r} \left |\nabla u(X) - \left (\nabla u \right)_r \right |^p dX & \le & C  \left (\dfrac{r}{R} \right )^{n+p\alpha_{p}}  \int_{B_R} \left |\nabla u(X) - \left (\nabla u \right)_R \right |^p dX \\ \nonumber
& + & C \int_{B_R} |\nabla u(X) - \nabla h(X)|^p dX .
\end{eqnarray}
On the other hand, by the minimality of $u$
we have
\begin{equation}\label{ineq.minimum 1}
\int_{B_R}\big(|\nabla u|^p-|\nabla h|^p\big)dX\leq
\int_{B_R}\big(F_0(h)-F_0(u)\big)dX + \int_{B_R} f(X)(h - u)dX.
\end{equation}
Readily one verifies that
\begin{eqnarray}
\label{ineq.p5.1}
\int_{B_R}\big(F_0(h)-F_0(u)\big)dX \leq C(\lambda_{+}, \lambda_{-})|B_R|.
\end{eqnarray}
As before, applying H\"older inequality and afterwards Poincar\'e inequality we obtain
\begin{eqnarray}
\label{ineq.p5.2}
\int_{B_R} f(X)(h - u)dX& \leq & \Vert f \Vert_{L^{n}}|B_R|^{\frac{p-1}{p} - \frac{1}{n}}\Vert u-h \Vert_{L^{p}} \\ \nonumber
& \leq & \Vert f \Vert_{L^{n}}|B_R|^{\frac{p-1}{p} - \frac{1}{n} + \frac{1}{n}}\Vert \nabla(u-h) \Vert_{L^{p}}.
\end{eqnarray}
Therefore, with the aid of Young inequality we estimate
\begin{eqnarray}
\label{ineq.p5.3}
\int_{B_R} f(X)(h - u) dX & \leq & \nonumber C(p)(\Vert f \Vert_{L^{n}})^{\frac{p}{p-1}}|B_R|^{\frac{p}{p-1}(\frac{p-1}{p})}\Vert \nabla(u-h) \Vert^{p}_{L^{p}}\\
& + & \dfrac{1}{4}\Vert \nabla(u-h) \Vert_{L^{p}} \\ \nonumber
& = & C(p)(\Vert f \Vert_{L^{n}})^{\frac{p}{p-1}}|B_R| + \dfrac{1}{4}\Vert \nabla(u-h) \Vert_{L^{p}}.
\end{eqnarray}
Taking into account \eqref{t 5.1} and replacing \eqref{ineq.p5.1} and \eqref{ineq.p5.3} into \eqref{ineq.minimum 1} we reach
\begin{eqnarray}\label{ineq.endlemma 2}
 \int_{B_r}|\nabla u-(\nabla u)_r|^pdX & \leq & \nonumber C(n,p)\left (\dfrac{r}{R} \right )^{n+p\alpha_{p}}  \int_{B_R} \left |\nabla u(X) - \left (\nabla u \right)_R \right |^p dX \\ \nonumber
& + & C(n,p)\big[C(\lambda_+,\lambda_-)\big]|B_R| + C(n,p)C(n,p,\lambda_+,\lambda_-, \Vert f \Vert_{L^{n}})|B_R| \\ \nonumber
& \leq & C\left (\dfrac{r}{R} \right )^{n+p\alpha_{p}}  \int_{B_R} \left |\nabla u(X) - \left (\nabla u \right)_R \right |^p dX + CR^{n}.
\end{eqnarray}
where $C=C(n,p,\lambda_{+}, \lambda_{-},\Vert f \Vert_{L^{n}})$ is a positive constant. In view of Lemma \ref{elem.ineq} we obtain
\begin{equation}\label{eq end thm 2}
 \int_{B_r(X_0)}|\nabla u-(\nabla u)_r|^pdX \leq C r^n,
\end{equation}
which shows that the gradient of $u$ lies in $\text{BMO}$ space and for any fixed subdomain $\Omega' \Subset \Omega$, there holds
$$
    \|\nabla u\|_{\text{BMO}(\Omega')} \le C(\Omega', n,p,\lambda_{+}, \lambda_{-},\Vert f \Vert_{L^{n}}).
$$
From Fefferman-Stein BMO Characterization Theorem, see \cite{FS}, there exist vector fields $\Gamma_0, \Gamma_1, \cdots \Gamma_n \in L^\infty(\Omega')$, such that
$$
    \nabla u(X) = \Gamma_0(X) + \sum\limits_{i=1}^n \mathcal{R}_j(\Gamma_j),
$$
where $\mathcal{R}_j$ denotes the classical Riesz transform,
$$
    \mathcal{R}_j(f) :=   f* K_j \quad \text{for } \quad K_j(X_j) := \dfrac{c_n X_j}{|X|^{n+1}}.
$$
It now follows by a similar reasoning employed in the Appendix of \cite{KNV} that
$$
    |\nabla u(X)| \le -\log |X-X_0|, \quad \text{for } X\in B_\rho(X_0), ~\rho \ll 1.
$$
Finally, by Morrey's type estimate, we obtain, for $s>n$,
$$
    \begin{array}{lll}
         \displaystyle |u(X) - u(X_0)| &\le& \displaystyle C|X-X_0|^{1-\frac{n}{r}} \cdot \left (\int_{B_r(X_0)} |\nabla u(Z)|^s dZ \right )^{1/p} \vspace{0.1cm} \\
            &\le&  \displaystyle C|X-X_0|^{1-\frac{n}{s}}   \left (\int_0^{|X-X_0|} |\log Z|^s \cdot |Z|^{n-1} dZ \right )^{1/s} \vspace{0.1cm} \\
            & \le &  \displaystyle C |X-X_0| \cdot \left | \log |X-X_0| \right |,
    \end{array}
$$
and the proof of Theorem \ref{thm 2} is  concluded. \qed

\section{Lower gradient bounds} \label{S. nondeg}

From this Section on, we aim towards gradient estimates to
minimizers of heterogeneous $p$-jet flow functional \eqref{cav
functional}. We remark once more that even for equations with no
free boundaries, say $\lambda_{-} = \lambda_{+}$, it is not possible
to obtain pointwise control of the gradient of $u_0$, under the
borderline condition $f \in L^n$. In this case, as proven in Theorem
\ref{thm 2}, the best control available is of logarithm order.
Therefore, from this Section on,  we shall assume the source
function $f(X)$, appearing in functional \eqref{cav functional} is
$q$-integrable, for $q > n$. Under such natural hypothesis, our next
Theorem shows that $u^{+}_0$ grows linearly away from the free
boundary $\mathfrak{F}^{+}:=
\partial \{ u > 0 \} \cap \Omega$.

\begin{theorem}\label{t.growth} Let $u_0$ be a local minimizer to $\mathcal{J}_0$, with $f\in L^q(\Omega)$, $q > n$, $\Omega'\Subset
\Omega$ and $X_0 \in \{u_0 > 0 \} \cap \Omega'$. There exists a
constant $c_0 > 0$ depending only on n, $p$, $\lambda_{+}$ and $\|f\|_{L^q(\Omega)}$  such that
$$
    u(X_0) \ge c_0 \dist(X_0, \mathfrak{F}^{+}).
$$
\end{theorem}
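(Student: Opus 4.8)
The plan is to prove the linear growth estimate $u(X_0) \ge c_0\,\mathrm{dist}(X_0,\mathfrak{F}^+)$ by a normalized-rescaling and contradiction argument, which is the standard route for non-degeneracy in free boundary problems but here must be run against the degenerate operator $\Delta_p$ and the rough source $f \in L^q$, $q > n$. Write $d := \mathrm{dist}(X_0,\mathfrak{F}^+)$ and suppose, for contradiction, that no such $c_0$ exists. Then there is a sequence of local minimizers $u_k$ of $\mathcal{J}_0$ (with sources $f_k$ uniformly bounded in $L^q$) and points $X_k \in \{u_k > 0\}$ with $u_k(X_k) = o(d_k)$, where $d_k := \mathrm{dist}(X_k,\mathfrak{F}^+(u_k))$. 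Consider the rescaled functions
$$
    v_k(Y) := \frac{u_k(X_k + d_k Y)}{u_k(X_k)}, \qquad Y \in B_1,
$$
which satisfy $v_k(0) = 1$, $v_k > 0$ in $B_1$ (since $B_{d_k}(X_k) \subset \{u_k > 0\}$), and solve a rescaled PDE $\Delta_p v_k = \tfrac{1}{p}\,\tilde{f}_k(Y)$ in $B_1$, where $\tilde f_k(Y) = d_k^p\, u_k(X_k)^{1-p} f_k(X_k + d_k Y)$. The crucial point will be that this rescaled source term tends to zero: its $L^q$ norm over $B_1$ scales like $d_k^{p - n/q}\, u_k(X_k)^{1-p}\,\|f_k\|_{L^q}$, and since $q > n$ one has $p - n/q > p - 1 > 0$, so that $d_k^{p-n/q} = o(d_k^{p-1}) = o\big(u_k(X_k)^{p-1}\big)$ by the contradiction hypothesis $u_k(X_k) = o(d_k)$; hence $\|\tilde f_k\|_{L^q(B_1)} \to 0$.

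\textbf{Key steps in order.} First, establish uniform positivity and boundedness of $v_k$ on an interior ball. Since $v_k \ge 0$ solves $\Delta_p v_k = \tfrac1p \tilde f_k$ with $\|\tilde f_k\|_{L^q(B_1)} \to 0$, Harnack's inequality (Theorem~\ref{thm Harnack}, valid since $q \ge n > n/p$) gives
$$
    \sup_{B_{1/2}} v_k \le C_r\Big\{ \inf_{B_{1/2}} v_k + \big(\|\tilde f_k\|_{L^q(B_1)}\big)^{1/(p-1)}\Big\},
$$
and applying it around $0$ yields $\inf_{B_{1/2}} v_k \ge c > 0$ for $k$ large since $v_k(0)=1$; combined with the interior $L^\infty$ bound this gives $0 < c \le v_k \le C$ on $B_{1/2}$. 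Second, by the uniform $W^{1,p}$ bounds (inherited from minimality, cf.\ the Remark following Theorem~\ref{l.bound}, suitably localized) and the $C^{1,\alpha}$ estimates for equations with $L^q$ right-hand side —or, more economically, interior Hölder estimates plus local compactness— extract a subsequence $v_k \to v_\infty$ locally uniformly in $B_{1/2}$, where $v_\infty \ge c > 0$ solves $\Delta_p v_\infty = 0$ in $B_{1/2}$ (the limit of the sources vanishing). Third, derive the contradiction: $v_\infty$ is a positive $p$-harmonic function, hence smooth, but one must show that the defining property $d_k = \mathrm{dist}(X_k,\mathfrak{F}^+(u_k))$ forces the free boundary to persist in the limit, i.e.\ there is some point $Y_\ast \in \partial B_1$ with $v_k(Y_\ast) \to 0$, which would contradict $v_\infty \ge c > 0$ on that ball — alternatively, and more robustly, one obtains the contradiction directly from a non-degeneracy/energy-competition inequality rather than from passing to the blow-up limit.

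\textbf{Main obstacle.} The delicate point — the one I expect to be the heart of the argument — is closing the contradiction without a monotonicity formula. The naive blow-up limit $v_\infty$ only "sees" the positive phase, so the information that $X_k$ is at distance exactly $d_k$ from $\mathfrak{F}^+$ is at risk of being lost in the limit. The cleanest fix is to avoid the full blow-up and instead argue quantitatively: comparing the energy of $u_k$ on $B_{d_k}(X_k)$ with that of a competitor that is "pushed down to touch zero" (subtracting a suitable barrier / using the minimality inequality $\mathcal{J}_0(u_k) \le \mathcal{J}_0(u_k - \varphi_k)$ for an appropriate $\varphi_k$), one gets that if $\sup_{B_{d_k}(X_k)} u_k$ is too small relative to $d_k$, the measure term $\lambda_+ |\{u_k > 0\} \cap B_{d_k}(X_k)|$ and the gradient term cannot balance, forcing $X_k$ to in fact lie outside $\{u_k > 0\}$ — a contradiction. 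Concretely, one would use that for a minimizer the function $r \mapsto \fint_{\partial B_r(X_0)} u$ grows at least linearly once it is positive, which follows from the $p$-subharmonicity of $u$ in $\{u > 0\}$ up to the controlled error from $f \in L^q$, $q > n$ (this error is subcritical and absorbed exactly because $p - n/q > p-1$). Making the constant $c_0$ depend only on $n, p, \lambda_+, \|f\|_{L^q}$ — and not on $u_0$ or $\Omega'$ beyond the distance to the boundary — requires that every estimate used (Harnack, $L^\infty$ bound, Caccioppoli) be invoked in its scale-invariant form, which is the bookkeeping one must be careful about.
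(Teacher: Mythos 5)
Your blow-up argument has two genuine gaps, and the first is fatal. Normalizing by $u_k(X_k)$ so that $v_k(0)=1$, you claim the rescaled source $\tilde f_k(Y)=d_k^pu_k(X_k)^{1-p}f_k(X_k+d_kY)$ has vanishing $L^q$-norm, by writing $d_k^{p-n/q}=o(d_k^{p-1})=o\bigl(u_k(X_k)^{p-1}\bigr)$. The second equality is backwards: the contradiction hypothesis $u_k(X_k)=o(d_k)$ gives $u_k(X_k)^{p-1}=o(d_k^{p-1})$, not the reverse. Setting $\varepsilon_k:=u_k(X_k)/d_k\to 0$, one actually finds $\|\tilde f_k\|_{L^q(B_1)}\lesssim d_k^{1-n/q}/\varepsilon_k^{p-1}$, a $0/0$ indeterminate that need not vanish; the very normalization that sets $v_k(0)=1$ pushes the rescaled forcing the wrong way. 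So the passage to a $p$-harmonic limit $v_\infty$ is unjustified. Even granting it, there is no contradiction: the constant $C_r$ in Theorem~\ref{thm Harnack} degenerates as $r\to 1$, so the Harnack lower bound $v_\infty\ge c>0$ holds only on compactly contained interior balls, while the free boundary witness sits on $\partial B_1$, where $v_\infty$ is free to vanish. You already anticipate this and correctly point toward an energy-competition fix, but the concrete mechanism you propose---linear growth of spherical means via ``$p$-subharmonicity of $u$ in $\{u>0\}$''---is wrong on two counts: inside $\{u>0\}$ one has $\Delta_p u=\frac1p f$ with $f$ of no fixed sign, so $u$ need not be $p$-subharmonic there; and even if it were, subharmonicity yields only monotonicity of means, not a linear lower rate. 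Non-degeneracy here is supplied by the $\lambda_+\chi_{\{u>0\}}$ term in the energy, not by any maximum-principle property.

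The paper avoids all of this by rescaling instead by $1/d$: set $v(Y):=\frac1d u_0(X_0+dY)$, so the quantity to be bounded from below is exactly $v(0)=u_0(X_0)/d$, while the source in the rescaled functional scales like $d\cdot f(X_0+d\cdot)$, whose $L^q$-norm on $B_1$ is of order $d^{1-n/q}\|f\|_{L^q}$ and is \emph{unconditionally} small for $q>n$---no contradiction hypothesis needed. Harnack from above gives $\sup_{B_{3/5}}v\le C\bigl(v(0)+(\text{small})\bigr)$; one then builds the competitor $g=\min\bigl\{v,\,C\bigl(v(0)+(\text{small})\bigr)\psi\bigr\}$ with $\psi$ a cutoff vanishing on $B_{1/10}$, and compares energies. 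The gradient side costs at most $C v(0)^p+C(\text{small})^{p/(p-1)}$, while the measure term gains the fixed amount $\lambda_+|B_{1/10}|$ from $g\equiv 0$ on $B_{1/10}$; minimality then forces $v(0)\ge c_0>0$ directly, with no compactness, no blow-up limit, and no contradiction.
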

\begin{proof} Let us fix $X_0 \in \{u_0 > 0 \} \cap \Omega'$. It suffices to show such estimate for points $X_0 \in \{u_0 > 0 \} \cap \Omega'$ such that
$$
    0 < \dist(X_0, \mathfrak{F}^{+})  \ll \delta(n, p, \lambda_{+}, \|f\|_{L^q(\Omega)}),
$$
for $\delta(n, p, \lambda_{+}, \|f\|_{L^q(\Omega)})$ to be regulated \textit{a posteriori}. Let us denote $d := \dist(X_0, \mathfrak{F}^{+})$ and if we define
$$
    v(X) := \dfrac{1}{d} u_0(X_0 + d X),
$$
one easily verifies that $v$ is a local minimizer to
$$
    \mathcal{J}^d_0(\xi) := \int_{B_1}  \left ( |\nabla \xi|^p + \lambda_{+} \chi_{\{\xi > 0 \}} + d\cdot f(X_0 + d\cdot X) \cdot \xi(X) \right )  dX,
$$
in $W^{1,p}_0(B_1) + v$. The thesis of Theorem \ref{t.growth} is equivalent to proving that $v(0)$ is universally bounded away from zero. Clearly $v \ge 0$ in $B_1$. By Harnack inequality (see Theorem \ref{thm Harnack}), we have
\begin{equation}\label{harnack proof LG}
    \begin{array}{lll}
            v(X) &\le& C(n,p) \left \{ v(0) + \|d\cdot f(X_0 + d\cdot X) \|_{L^q(B_1)}^{\frac{1}{p-1}}  \right \} \\
             &\le& C(n,p) \left \{ v(0) + \left ( d^{1 - \frac{n}{q}}  \cdot \|   f  \|_q \right )^{\frac{1}{p-1}}  \right \} \quad \forall X \in B_{3/5}.
    \end{array}
\end{equation}
In the sequel, we choose a nonnegative, smooth radially symmetric cut-off function $\psi$ satisfying
$$
    \phi \equiv 0 \text{ in } B_{1/10} \quad \text{and} \quad \phi \equiv 1 \text{ in } B_1 \setminus B_{1/2}
$$
and define the test function $g$ in $B_1$
by
$$
    g(X) := \min \left \{v, C(n,p) \left \{ v(0) +  \left ( d^{1 - \frac{n}{q}}  \cdot \|   f  \|_q \right )^{\frac{1}{p-1}}  \right \}  \cdot \psi(X)  \right \}.
$$
Notice that $g \in W^{1,p}$ and from Harnack inequality, estimate
\eqref{harnack proof LG}, $g$ agrees with  $v$ in $B_1\setminus
B_{1/2}$. Let us label the set
$$
    B_{1/2} \supset  \Pi := \left \{ Y \in B_{1/2} : C(n,p) \left \{ v(0) +  \left ( d^{1 - \frac{n}{q}}  \cdot \|   f  \|_q \right )^{\frac{1}{p-1}}   \right \}  \cdot \psi(Y )   < v(Y) \right \} \supset B_{1/10}.
$$
From the minimality of $v$, we estimate
\begin{equation}\label{nondeg Eq00}
    \int_{\Pi} \lambda_{+}\left( 1 -\chi_{\{g > 0\}} \right ) + d\cdot f(X_0 + d\cdot X) \cdot \left [ v(X)-g(X) \right ] dX \le \int_{\Pi} \left ( |\nabla g|^p - |\nabla v|^p \right )dX.
\end{equation}
The right-hand side of \eqref{nondeg Eq00} is readily estimated as
\begin{equation}\label{nondeg Eq01}
    \begin{array}{lll}
            \displaystyle \int_{\Pi} \left ( |\nabla g|^p - |\nabla v|^p \right )dX &\le& \left [C(n,p) \left \{ v(0) +  \left ( d^{1 - \frac{n}{q}}  \cdot \|   f  \|_q \right )^{\frac{1}{p-1}}  \right \}  \cdot  \| \psi \|_\infty \right ]^p  \\
            &\le & C v(0)^p + C \left [ d^{1 - \frac{n}{q}}  \cdot \|   f  \|_q \right ]^{\frac{p}{p-1}}.
    \end{array}
\end{equation}
We now turn our efforts towards estimating the left-hand side of \eqref{nondeg Eq00} by below. Readily we obtain
\begin{equation}\label{nondeg Eq02}
    \begin{array}{lll}
            \displaystyle \int_{\Pi} \lambda_{+} \left( 1 -\chi_{\{g > 0\}} \right )dX & = & \displaystyle \int_{\Pi}  \lambda_{+} \chi_{\{g = 0 \}}  dX \\
             &\ge& \displaystyle \lambda_{+} \left | B_{1/10} \right |.
    \end{array}
\end{equation}
Invoking once more Harnack inequality \eqref{harnack proof LG} and the fact that $\Pi \subset B_{1/2}$, we estimate
\begin{equation}\label{nondeg Eq03}
            \displaystyle \int_{\Pi} d\cdot f(X_0 + d\cdot X) \cdot \left [ v(X)-g(X) \right ] dX \le C \left ( d^{1 - \frac{n}{q}}  \cdot \|   f  \|_q \right ) \cdot  \left \{ v(0) +  \left ( d^{1 - \frac{n}{q}}  \cdot \|   f  \|_q \right )^{\frac{1}{p-1}}  \right \}
\end{equation}
Combining \eqref{nondeg Eq01}, \eqref{nondeg Eq02} and  \eqref{nondeg Eq03} we reach
\begin{equation}\label{nondeg Eq04}
    C \left \{  v(0)^p  +  \left ( d^{1 - \frac{n}{q}}  \cdot \|   f  \|_q \right ) v(0) \right \} \ge \lambda_{+} \left | B_{1/10} \right | - C \left [ d^{1 - \frac{n}{q}}  \cdot \|   f  \|_q \right ]^{\frac{p}{p-1}}.
\end{equation}
Therefore, choosing  $0 < d  \le \delta(n, p, \lambda_{+}, \|f\|_{L^q(\Omega)}) \ll 1$, appropriately, we conclude
$$
    v(0) \ge c(n,p, \lambda_{+}, \|f\|_q) > 0,
$$
and the proof of Theorem \ref{t.growth} follows.
\end{proof}

\bigskip

Next we iterate linear growth established in Theorem \ref{t.growth} as we obtain a stronger non-degeneracy property for $u_0$ near the free boundary.

\begin{theorem}\label{t.strong nondeg}
Let $u_0$ be a local minimizer to $\mathcal{J}_0$, with $f\in L^q(\Omega)$, $q > n$, $\Omega'\Subset
\Omega$ and $X_0 \in \{u_0 \ge 0 \} \cap \Omega'$.  There exists a constant $\underline{c} > 0$ depending  on
$n$, $p$, $\lambda_{+}$ and $\|f\|_{L^q(\Omega)}$, such that
$$
    \sup\limits_{B_r(X_0)} u_0^{+} \ge \underline{c} \cdot  r,
$$
for any $0 < r \le \dist(\partial \Omega', \partial \Omega)$.
\end{theorem}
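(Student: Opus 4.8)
The plan is to run a one‑step energy comparison on a blow‑up, self‑improve it, and close with the linear growth of Theorem~\ref{t.growth}. \emph{Reduction.} If $X_0\in\{u_0>0\}$ with $\dist(X_0,\mathfrak{F}^{+})\ge r/2$, then Theorem~\ref{t.growth} already gives $u_0(X_0)\ge c_0 r/2$ and we are done; otherwise pick $Y_0\in\mathfrak{F}^{+}$ realizing $\dist(X_0,\mathfrak{F}^{+})=|Y_0-X_0|<r/2$, so that $B_{r/2}(Y_0)\subset B_r(X_0)$ and it suffices to prove the estimate at the free boundary point $Y_0$ for the radius $r/2$. Renaming such a point $X_0\in\mathfrak{F}^{+}\cap\Omega'$ and rescaling, $v(X):=r^{-1}u_0(X_0+rX)$ is a local minimizer in $B_1$ of the rescaled functional with source $\tilde f(X):=r\,f(X_0+rX)$, $\|\tilde f\|_{L^q(B_1)}=r^{1-n/q}\|f\|_{L^q(B_r(X_0))}$, and the goal becomes $M:=\sup_{B_1}v^{+}\ge\underline c$.

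\emph{Comparison.} Fix a cutoff $\phi$ with $\phi\equiv0$ on $B_{1/2}$, $\phi\equiv1$ off $B_{3/4}$, $0\le\phi\le1$, $|\nabla\phi|\le C(n)$, and test minimality against $g:=\min\{v,M\phi\}\in W^{1,p}(B_1)$, which coincides with $v$ near $\partial B_1$ and has $\{g>0\}\subset\{v>0\}\setminus B_{1/2}$. Writing $\lambda_+\chi_{\{w>0\}}+\lambda_-\chi_{\{w\le0\}}=(\lambda_+-\lambda_-)\chi_{\{w>0\}}+\lambda_-|B_1|$ and noting $\nabla g=\nabla v$ on $\{v\le M\phi\}$ while $\nabla g=M\nabla\phi$ on $\{v>M\phi\}\subset\{v>0\}$, minimality yields
$$(\lambda_+-\lambda_-)\,\bigl|\{v>0\}\cap B_{1/2}\bigr|+\int_{\{v>0\}\cap B_{1/2}}|\nabla v|^p\,dX\ \le\ C(n,p)\,M^p+C(n)\,M\,\|\tilde f\|_{L^q(B_1)}.$$
Since $q>n$ the last term, $C\,M\,r^{1-n/q}\|f\|_{L^q(\Omega)}$, is of lower order; thus both the density of $\{v>0\}$ in $B_{1/2}$ and $\|\nabla v^{+}\|_{L^p(B_{1/2})}^p$ are $O(M^p)+\mathrm{o}(1)$ as $M\to0$, uniformly for bounded $r$.

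\emph{Self‑improvement.} Since $v^{+}\ge0$ is a $p$‑subsolution, $\Delta_p v^{+}\ge -C|\tilde f|$ in $B_1$, the one‑sided (subsolution) version of the estimate behind Theorem~\ref{thm Harnack} gives $\sup_{B_{1/4}}v^{+}\lesssim\bigl(|B_{1/2}|^{-1}\!\int_{B_{1/2}}(v^{+})^p\bigr)^{1/p}+\|\tilde f\|_{L^q(B_{1/2})}^{1/(p-1)}$, and the first term is $\lesssim M\cdot(\text{density of }\{v>0\}\text{ in }B_{1/2})^{1/p}\ll M$ for $M$ small. Undoing the scaling, this reads: there is $\tau>0$ so that whenever $\sup_{B_\rho(X_0)}u_0^{+}<\underline c\,\rho$ at some small scale $\rho$, then also $\sup_{B_{\rho/4}(X_0)}u_0^{+}\le\tfrac12\,\underline c\,(\rho/4)$, provided $\underline c$ is universally small and $\rho$ is below a threshold fixed by $\|f\|_{L^q}$. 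Iterating dyadically from the scale $r$ forces $\sup_{B_\rho(X_0)}u_0^{+}=\mathrm{O}(\rho^{1+\tau})$ as $\rho\to0$.

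\emph{Conclusion.} This super‑linear decay at the free boundary point $X_0$ must be shown incompatible with $X_0\in\overline{\{u_0>0\}}$. Combining it with Theorem~\ref{t.growth} — if $Z_j\to X_0$ with $Z_j\in\{u_0>0\}$ and $d_j:=\dist(Z_j,\mathfrak{F}^{+})$, then $c_0 d_j\le u_0^{+}(Z_j)\le C|Z_j-X_0|^{1+\tau}$ — and with the comparison estimate applied at the scales $|Z_j-X_0|$, one obtains that the positive phase has vanishing Lebesgue density at $X_0$; a final clean‑up argument (deleting the remaining thin positive phase from $B_\rho(X_0)$, $\rho\to0$, and using minimality together with the growth bound inside the solid balls $B_{d_j}(Z_j)\subset\{u_0>0\}$) then contradicts $X_0$ being a boundary point of $\{u_0>0\}$. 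Therefore $M\ge\underline c(n,p,\lambda_+,\lambda_-,\|f\|_{L^q})$, first for $r$ below the threshold fixed above and then, trivially, for all $r\le\dist(\partial\Omega',\partial\Omega)$. This last step is the crux and the main obstacle: in the absence of any (almost) monotonicity formula, the non‑degeneration of the positive phase at a free boundary point has to be squeezed out of the comparison machinery and the linear growth of Theorem~\ref{t.growth} alone; a secondary technical point — the reason $q>n$ is imposed — is that the $L^q$ source must remain strictly subordinate at every scale of the iteration.
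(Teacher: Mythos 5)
Your strategy — energy comparison at a blow--up scale, density smallness of $\{v>0\}$, a self--improvement loop producing super--linear decay, and then a ``clean--up'' contradiction — is not the paper's argument, and it has a genuine gap exactly where you flag it. The ``Conclusion'' step is never actually carried out. Showing that $\sup_{B_\rho(X_0)}u_0^{+}=\mathrm{O}(\rho^{1+\tau})$ at a point $X_0\in\mathfrak{F}^{+}$ contradicts $X_0$ being a boundary point of $\{u_0>0\}$ is not a formality: a priori the positive phase could accumulate at $X_0$ as a thin cusp along which $u_0^{+}$ decays faster than linearly. The pairing $c_0\,d(Z)\le u_0^{+}(Z)\lesssim|Z-X_0|^{1+\tau}$ only tells you $\dist(Z,\mathfrak{F}^{+})$ is small relative to $|Z-X_0|$, hence the positive phase is thin; it does not rule it out. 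And the proposed energy ``deletion'' of the thin positive phase does not obviously close either, because both the gain $(\lambda_+-\lambda_-)\,|\{u_0>0\}\cap B_\rho|$ and the gradient--modification cost vanish as $\rho\to0$, so there is no definite sign to exploit without much finer information. In short, the proposal reduces the theorem to a claim that is itself as hard as (or harder than) the original statement.

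There are also secondary issues in the self--improvement loop that would require care. The subsolution estimate gives $\sup_{B_{1/4}}v^{+}\lesssim M^{2}+\|\tilde f\|_{L^q}^{1/(p-1)}$ after using the density bound; this only yields $M_{\rho/4}\le\theta M_\rho$ when the source term $\rho^{(1-n/q)/(p-1)}\|f\|_{L^q}^{1/(p-1)}$ is dominated by $M_\rho$. As the iteration drives $M_\rho\to0$, one must check that the source contribution does not overtake the running bound; depending on $p$ and $q$, the geometric rates of $M_k\sim\theta^k$ and $\rho_k^{(1-n/q)/(p-1)}\sim 4^{-k\sigma}$ can conflict, and one ends up with a decay exponent $\tau=\min\{\log_4(1/\theta),\sigma\}$ rather than a free quadratic gain. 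This can be patched, but it is not the ``provided $\underline c$ is universally small'' statement as written.

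The paper's proof avoids all of this and uses a fundamentally different mechanism. It first establishes, by a blow--up compactness argument (normalize $\varrho_j(Z)=u_0(X_j+d_jZ)/u_0(X_j)$, apply the Harnack inequality and the linear growth of Theorem~\ref{t.growth} to show $\varrho_j\to1$ locally uniformly, and derive a contradiction at the nearest free boundary point), the discrete improvement
$\sup_{B_{d(X)}(X)}u_0\ge(1+\delta_0)\,u_0(X)$ for $X\in\{u_0>0\}\cap\Omega'$, with $d(X)=\dist(X,\mathfrak{F}^{+})$. It then chains these improvements along a Caffarelli polygonal path: starting from $X_0$, each step moves a distance $d(X_{n-1})$ to a point where $u_0$ has grown by a fixed factor and, crucially, by at least $c\,|X_n-X_{n-1}|$; the path must exit $B_r(X_0)$ only after covering a fixed fraction of $r$, producing $\sup_{B_r}u_0^{+}\ge\underline c\,r$. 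This is entirely local and constructive; no global energy contradiction is needed, which is exactly why it succeeds where the ``clean--up'' step you outline is stuck.
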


\begin{proof} By continuity, it suffices to show $u_0$ is strongly non-degenerated, i.e., the thesis of Theorem \ref{t.strong nondeg}
holds within the positivity set
$$
    \Omega_0^{+} :=  \{u_0 > 0 \} \cap \Omega'.
$$
We will obtain such a result by iterating linear growth estimate. More precisely we will initially show that there exists a $\delta_0 > 0$ that depends only on $n$, $\Omega'$, $p$, $\lambda_{+}$ and $\|f\|_q$ such that if $X \in \{u_0 > 0 \} \cap \Omega'$, there holds
\begin{equation}\label{iteration_SN}
    \sup\limits_{B_{d(X)}(X_0)} u_0 \ge (1+\delta_0) u_0(X_0),
\end{equation}
where $d(X) := \text{dist}(X, \mathfrak{F}^{+}).$ In order to verify \eqref{iteration_SN}, let us assume, for the purpose of contradiction, that no such a $\delta_0$ exist. If so, it would be possible to find sequences $\delta_j = \text{o}(1)$ and $X_j \in \{u_0 > 0 \} \cap \Omega'$ satisfying
\begin{equation}\label{eq.iteration_SN01}
    \sup\limits_{B_{d_j}(X_j)} u_0 \le (1+\delta_j) u_0(X_j), \quad \text{ for } \quad d_j := \dist(X_j, \mathfrak{F}^{+}) = \text{o}(1).
\end{equation}
Let us consider the following normalized sequence of functions
$\varrho_j \colon B_1 \to \mathbb{R}$ defined by
$$
    \varrho_j(Z) := \dfrac{u_0\left ( X_j + d_j Z \right)}{u_0(X_j)}.
$$
Clearly, $\varrho_j(0) = 1$, and from \eqref{eq.iteration_SN01},
\begin{equation}\label{eq.iteration_SN02}
    0\le \varrho_j \le 1+\delta_j\text{ in } B_1.
\end{equation}
In addition,  $\varrho_j$  satisfies
\begin{equation}\label{eq.iteration_SN03}
    \Delta_p \varrho_j = \dfrac{d_j^p}{u_0(X_j)^{p-1}} \cdot f(X_j + d_j Z),
\end{equation}
in the distributional sense in $B_1$. Taking into account the
linear growth established in Theorem \ref{t.growth} and Equation
\eqref{eq.iteration_SN03}, we reach
\begin{equation}\label{eq.iteration_SN04}
    \left | \Delta_p \varrho_j \right | \le C d_j \cdot f(X_j + d_j Z), \quad B_1.
\end{equation}
From Harnack inequality, we deduce the sequence
$\{\varrho_j\}_{j\in \mathbb{N}}$ is locally equicontinuous in
$B_1$; thus, up to a subsequence,   $\varrho_j \to \varrho$
locally uniformly in $B_1$. Harnack inequality further reveals
that for any $|X|\le r <1$, there holds
\begin{equation}\label{eq.iteration_SN05}
     0 \le [1+\delta_j] - \varrho_j(X) \le C_r \left ( [ 1+\delta_j] - \varrho_j(0) + d_j^{1 - \frac{n}{q}}  \cdot \|   f  \|_q \right ) = C_r \cdot \text{o}(1).
\end{equation}
Letting $j\to \infty$ in the above estimate, we deduce the limiting blow up function $\varrho \equiv 1 \text{ in }  {B}_1$.

\par

\medskip

We now show that such a conclusion drives us to an inconsistency. To this end,  let $Y_j \in \mathfrak{F}^{+}$ be such that
$d_j = |X_j - Y_j|.$ Up to subsequence, there would hold
$$
    1 + \text{o}(1) = \varrho_j\left ( \dfrac{Y_j - X_j}{d_j} \right ) = 0,
$$
which clearly gives a contradiction for $j \gg 1$. We have  shown the validity of estimate \eqref{iteration_SN}.

\par
\medskip

To finish up the proof of Theorem \ref{t.strong nondeg}, we employ a Caffarelli's polygonal type of argument. That is, we construct a polygonal along which $u_0$ grows linearly.  Starting from $X_0 = X$, we find a sequence of points $\{X_{n}\}_{n \geq 0}$ such that:
\begin{enumerate}
\item $u_0(X_{n}) \geq (1+ \delta_{0})^n u_0(X_0)$
\item $|X_{n}-X_{n-1}| = \dist (X_{n-1}, \mathfrak{F}^{+})$
\item $u_0(X_n) - u_0(X_{n-1}) \ge c |X_n - X_{n-1}|$. In particular, $u_0(X_n) - u_0(X_0) \ge c|X_n - X_0|$.
\end{enumerate}

Since $u(x_{n}) \to \infty$ as $n \to \infty$ this process must be finite, that is, there exists
a last $X_{n_0}$ in the ball $B_{r}(X_0)$. For such a last point,
$$
    |X_{n_0} - X_0| \ge c_p r,
$$
Finally,
$$
    \sup\limits_{B_r(X)} u_0 \ge u_0(X_{n_0}) \ge u_0(X_0) +  c |X_n - X_0| \ge  \underline{c} \cdot r,
$$
and the proof is concluded.
\end{proof}

\section{Stability for free boundary problems} \label{S. stability}

In this section we show the stability of the family of free
boundary problems obtained by the minimization of the
non-differentiable functionals
\begin{equation}\label{eq. gamma}
    \mathcal{J}_\gamma(v):=\int_\Omega\left (|\nabla v|^p+ \lambda_+(v^+)^\gamma+\lambda_-(v^-)^\gamma +
    f(X)\cdot v \right )dX \longrightarrow \text{min},
\end{equation}
as $\gamma = \text{o}(1)$.  The ultimate goal of this section is
to show that any limit point $u_0$ of $\{u_\gamma\}_{\gamma =
\text{o}(1)}$  is a minimizer to the $p$-degenerate  cavitation
functional
\begin{equation}\label{cav functional S. stability}
    \mathcal{J}_0(v) := \int_\Omega\left (|\nabla
    v|^p+ \lambda_+ \chi_{\{v > 0 \}} +\lambda_{-}\chi_{\{v \le 0 \}} + f(X)\cdot  v \right
    )dX.
\end{equation}

Initially we show compactness of $\{u_\gamma\}_{0<\gamma\le 1}$ in
the $W^{1,p}$ topology.

\begin{proposition}\label{prop comp}
Let $u_{\gamma_j}$ be a sequence of minima to the functional
$\mathcal{J}_{\gamma_j}$, $f\in L^n$ and assume $u_{\gamma_j} \to
v$ a.e., $\gamma_j \to \gamma_0$. Then for any $0< E < \infty$, $u_{\gamma_j} \to v$ in the
$W_\text{loc}^{1,E}(\Omega)$ topology.
\end{proposition}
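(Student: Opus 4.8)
The strategy is to show that the gradients $\nabla u_{\gamma_j}$ converge strongly in $L^p_{\loc}$, and then bootstrap the exponent. First I would fix a subdomain $\Omega' \Subset \Omega$ and a cut-off $\eta \in C_0^\infty(\Omega)$ with $\eta \equiv 1$ on $\Omega'$, $0 \le \eta \le 1$. Since each $u_{\gamma_j}$ minimizes $\mathcal{J}_{\gamma_j}$, the uniform bounds from Theorem \ref{l.bound} and the remark that follows it give a uniform $W^{1,p}(\Omega)$ bound; hence, up to a subsequence, $u_{\gamma_j} \rightharpoonup v$ weakly in $W^{1,p}_{\loc}$ and (given) $u_{\gamma_j} \to v$ a.e.\ and in $L^p_{\loc}$. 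The Euler--Lagrange equation from Proposition \ref{PDE} reads
$$
    \Delta_p u_{\gamma_j} = \frac{\gamma_j}{p}\left( \lambda_+ (u_{\gamma_j}^+)^{\gamma_j-1}\chi_{\{u_{\gamma_j}>0\}} - \lambda_-(u_{\gamma_j}^-)^{\gamma_j-1}\chi_{\{u_{\gamma_j}\le 0\}} \right) + \frac{1}{p} f(X).
$$

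**Strong $L^p$ convergence of gradients.** I would test the difference of the equations for $u_{\gamma_j}$ and a suitable comparison against $\varphi_j := \eta^p (u_{\gamma_j} - v)$ (or rather use $v$ as a fixed reference and the monotonicity \eqref{lemma 2.2 - Eq03}). Concretely, using $\varphi_j$ as test function in the weak formulation for $u_{\gamma_j}$ and the analogous expression with $\nabla v$ in place, the monotonicity inequality $\langle |\xi_1|^{p-2}\xi_1 - |\xi_2|^{p-2}\xi_2, \xi_1 - \xi_2\rangle \ge c(n,p)|\xi_1-\xi_2|^p$ yields
$$
    c\int_{\Omega'} |\nabla u_{\gamma_j} - \nabla v|^p \, dX \le \int_{\Omega} \eta^p \langle |\nabla u_{\gamma_j}|^{p-2}\nabla u_{\gamma_j} - |\nabla v|^{p-2}\nabla v, \nabla(u_{\gamma_j} - v)\rangle \, dX + (\text{lower order}).
$$
The right-hand side splits into: the term involving $\nabla \eta$, controlled by $\|u_{\gamma_j}-v\|_{L^p_{\loc}} \to 0$ together with the uniform gradient bounds (Hölder); the absorption term, which I bound by noting $\gamma_j (u_{\gamma_j}^\pm)^{\gamma_j-1} \cdot (u_{\gamma_j}-v) \eta^p$ is integrable and small — here one uses that $\gamma_j |t|^{\gamma_j - 1}$, while singular as $t\to 0$, integrates against the $W^{1,p}$ factor because $(u_{\gamma_j}^\pm)^{\gamma_j} \in L^1$ uniformly (the elementary bound $t^\gamma \le \max\{1,t\}$) and $\gamma_j \to \gamma_0$, or more robustly by absorbing $(u_{\gamma_j}-v)$ into $u_{\gamma_j}^{\gamma_j}$ plus $v$ contributions; and the $f$-term, controlled by $\|f\|_{L^n}$ and $\|u_{\gamma_j}-v\|_{L^{p}_{\loc}} \to 0$ via Hölder and Sobolev embedding. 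Passing $j\to\infty$ forces $\int_{\Omega'}|\nabla u_{\gamma_j}-\nabla v|^p \to 0$, i.e.\ $\nabla u_{\gamma_j} \to \nabla v$ in $L^p_{\loc}$.

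**Upgrading the exponent.** Once strong $L^p_{\loc}$ convergence is in hand, I would pass to a further subsequence so that $\nabla u_{\gamma_j} \to \nabla v$ a.e. Since Theorem \ref{l.bound} and its remark furnish a uniform bound in $W^{1,p}$ only, I need an a priori higher integrability. This comes from the interior estimates already proved: by Theorem \ref{thm 2} (applicable since $f\in L^n$, $\gamma_j$ eventually in $[0,1]$, and for $\gamma_j>0$ one has the even stronger Theorem \ref{thm 1}) the gradients $\nabla u_{\gamma_j}$ lie in $\mathrm{BMO}_{\loc}$ — indeed in $L^s_{\loc}$ for every $s<\infty$ — with estimates uniform in $j$ depending only on $n$, $p$, $\lambda_\pm$, $\|f\|_{L^n}$ and $\dist(\cdot,\partial\Omega)$. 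Thus $\{|\nabla u_{\gamma_j}|^E\}_j$ is equi-integrable on $\Omega'$ for any fixed $E<\infty$, and combined with the a.e.\ convergence, Vitali's convergence theorem gives $\nabla u_{\gamma_j} \to \nabla v$ in $L^E_{\loc}(\Omega)$, which is precisely the assertion.

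**Main obstacle.** The delicate point is the absorption term $\frac{\gamma_j}{p}\lambda_\pm (u_{\gamma_j}^\pm)^{\gamma_j - 1}\chi_{\{\pm u_{\gamma_j} > 0\}}$ when testing: it is genuinely singular where $u_{\gamma_j}$ is small, so one cannot naïvely bound $(u_{\gamma_j})^{\gamma_j-1}(u_{\gamma_j}-v)$ pointwise. The clean way around it is to exploit minimality directly rather than the PDE — compare $\mathcal{J}_{\gamma_j}(u_{\gamma_j})$ with $\mathcal{J}_{\gamma_j}$ evaluated on $w_j := u_{\gamma_j} + \eta^p(v - u_{\gamma_j})$ (an admissible competitor, since it agrees with $u_{\gamma_j}$ near $\partial\Omega$), using Lemma \ref{mon lemma}-type convexity on the gradient part, $|(u^\pm)^{\gamma_j} - (w^\pm)^{\gamma_j}| \le |u - w|^{\gamma_j}$ from Lemma \ref{a 1}, and the fact that $\int |u_{\gamma_j} - w_j|^{\gamma_j} \le C\|u_{\gamma_j}-v\|_{L^1(\Omega')}^{\gamma_j} \to 0$. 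This sidesteps the pointwise singularity entirely and is how I would actually carry out the gradient-convergence step; the exponent upgrade then proceeds exactly as above.
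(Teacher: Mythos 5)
Your second step --- the upgrade from almost-everywhere gradient convergence to $L^E_{\mathrm{loc}}$ convergence via the uniform $\mathrm{BMO}$ bound supplied by Theorem~\ref{thm 2}, the John--Nirenberg inequality, and Vitali's theorem --- coincides with what the paper does. The divergence, and the trouble, is in how you try to get gradient convergence in the first place. The paper's argument is short and qualitative: from Proposition~\ref{PDE} and a.e.\ convergence of $u_{\gamma_j}$ one has weak-$\ast$ convergence of the measures $\Delta_p u_{\gamma_j}$, and then the Boccardo--Murat truncation method (\cite{BM}) delivers $\nabla u_{\gamma_j}\to\nabla v$ a.e.\ directly, with no need for any rate, any energy identity, or any control on the size of the singular absorption term beyond what makes the distributional PDE meaningful.

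Your proposed replacement --- an energy comparison of $\mathcal{J}_{\gamma_j}(u_{\gamma_j})$ with $\mathcal{J}_{\gamma_j}(w_j)$ for $w_j:=u_{\gamma_j}+\eta^p(v-u_{\gamma_j})$ --- has two genuine gaps. First, Lemma~\ref{mon lemma} is proved \emph{only} when the comparison function is $p$-harmonic; that hypothesis is essential, since its proof integrates the identity $\div(|\nabla h|^{p-2}\nabla h)\cdot(\psi-h)=0$. Your $w_j$ is not $p$-harmonic, and the pointwise inequality
$|\xi_1|^p\ge|\xi_2|^p+p|\xi_2|^{p-2}\xi_2\cdot(\xi_1-\xi_2)+c|\xi_1-\xi_2|^p$
leaves you with the linear term $p\int|\nabla w_j|^{p-2}\nabla w_j\cdot\nabla\bigl(\eta^p(u_{\gamma_j}-v)\bigr)\,dX$, which you would then need to show vanishes; it does not obviously do so, because both factors depend on $j$ and $\nabla(u_{\gamma_j}-v)$ converges only weakly. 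Second, and more decisively for the case the paper actually uses ($\gamma_0=0$), the claimed estimate
$\int|u_{\gamma_j}-w_j|^{\gamma_j}\,dX\lesssim\|u_{\gamma_j}-v\|_{L^1(\Omega')}^{\gamma_j}\to 0$
is false: $a_j^{\gamma_j}$ with $a_j\to 0$ and $\gamma_j\to 0$ is an indeterminate form, and for instance $a_j=2^{-j}$, $\gamma_j=1/j$ give $a_j^{\gamma_j}\equiv\tfrac12$. So for $\gamma_0=0$ the right side of your energy inequality stays bounded away from zero and the argument does not close. (For $\gamma_0>0$ this particular objection disappears, but the first one remains.) The Boccardo--Murat route used by the paper avoids both difficulties because it never asks the energy defect to vanish --- it only needs weak convergence of the $p$-Laplacians and a.e.\ convergence of the functions, which you already have.
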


\begin{proof}
  It follows from Proposition \ref{PDE} and a.e. convergence that
  $\Delta_p u_{\gamma_j} \rightharpoonup \Delta_p v$ in the sense of measures. Thus, from
  truncation arguments, see for instance \cite{BM},
  \begin{equation}\label{comp eq01}
        \nabla u_{\gamma_j} \to \nabla v \text{ a.e. in } \Omega.
  \end{equation}
  From Theorem \ref{thm 2}, for any $\Omega' \Subset \Omega$, there
  exists a constant $C(n,p, \lambda_{+}, \lambda_{-}, \Omega', \|f\|_n)$,
  independent of $\gamma_j$, such that,
  \begin{equation}\label{comp eq02}
    \|\nabla u_{\gamma_j}\|_{\text{BMO}(\Omega')} \le C(n,p, \lambda_{+}, \lambda_{-}, \Omega', \|f\|_n).
  \end{equation}
  Thus, from John-Nirenberg's Theorem, for $1\le E < \infty $ fixed,
  \begin{equation}\label{comp eq03}
        \|\nabla u_{\gamma_j}\|_{L^{E+1}(\Omega')} \le C(n,p, \lambda_{+}, \lambda_{-}, \Omega', \|f\|_n).
  \end{equation}
    Finally combining \eqref{comp eq01}, \eqref{comp eq03} and
    classical arguments, see for instance, \cite{MT}, we deduce
    $$
        \nabla u_{\gamma_j} \to \nabla v \quad \text{ in }
        L^E(\Omega'),
    $$
    and the Proposition follows.
\end{proof}

\begin{theorem}\label{t.ACF funct} Let $u_0 := \lim\limits_{\gamma_j} u_{\gamma_j}$ as $\gamma_j \to 0$.  Then $u_0$ is a local minimizer of $\mathcal{J}_0$.
\end{theorem}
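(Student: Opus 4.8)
\textbf{Proof proposal for Theorem \ref{t.ACF funct}.}

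The plan is to show that $u_0$ satisfies the minimality inequality $\mathcal{J}_0(u_0) \le \mathcal{J}_0(w)$ for every competitor $w$ agreeing with $u_0$ outside a fixed ball $B \Subset \Omega$. The natural strategy is a $\Gamma$-convergence type argument: first establish a $\liminf$ inequality $\mathcal{J}_0(u_0) \le \liminf_j \mathcal{J}_{\gamma_j}(u_{\gamma_j})$, then produce, for any given competitor $w$ of $\mathcal{J}_0$, a recovery sequence $w_j$ with $w_j - u_{\gamma_j} \in W^{1,p}_0(B)$ and $\limsup_j \mathcal{J}_{\gamma_j}(w_j) \le \mathcal{J}_0(w)$. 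Chaining these with the minimality of each $u_{\gamma_j}$, namely $\mathcal{J}_{\gamma_j}(u_{\gamma_j}) \le \mathcal{J}_{\gamma_j}(w_j)$, closes the argument.

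First I would handle the gradient and source terms, which are the easy pieces. By Proposition \ref{prop comp} we have $\nabla u_{\gamma_j} \to \nabla u_0$ strongly in $L^p_{\loc}$ (taking $E = p$ there), so $\int_B |\nabla u_{\gamma_j}|^p \to \int_B |\nabla u_0|^p$; likewise a.e.\ convergence plus the uniform $L^\infty$ and $W^{1,p}$ bounds from Theorem \ref{l.bound} give $\int_B f u_{\gamma_j} \to \int_B f u_0$. The delicate term is the potential $\int_B F_{\gamma_j}(u_{\gamma_j})\,dX = \int_B \left( \lambda_+(u_{\gamma_j}^+)^{\gamma_j} + \lambda_-(u_{\gamma_j}^-)^{\gamma_j}\right) dX$, which must converge to $\int_B F_0(u_0)\,dX = \int_B \left(\lambda_+ \chi_{\{u_0>0\}} + \lambda_- \chi_{\{u_0 \le 0\}}\right) dX$. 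Pointwise, $(u_{\gamma_j}^+)^{\gamma_j} \to \chi_{\{u_0 > 0\}}$ wherever $u_{\gamma_j} \to u_0$ and $u_0 \ne 0$, since $t^{\gamma_j} \to 1$ for $t>0$; the only danger is the set $\{u_0 = 0\}$. Here the non-degeneracy from Theorem \ref{t.strong nondeg}, which forces $|\{u_0 = 0\} \cap \{u_0 > 0 \text{ nearby}\}|$ type sets to have no interior charge, should be used to show $|\{u_0 = 0\}| = 0$ for this limiting minimizer; combined with $|(u_{\gamma_j}^+)^{\gamma_j}| \le \max\{1, \|u_{\gamma_j}\|_\infty\}$ and dominated convergence, the potential term converges. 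This gives the $\liminf$ (in fact $\lim$) inequality.

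For the recovery sequence, given a competitor $w \in W^{1,p}_0(B) + u_0$, set $w_j := w + (u_{\gamma_j} - u_0)$, so $w_j - u_{\gamma_j} = w - u_0 \in W^{1,p}_0(B)$ as required. Since $u_{\gamma_j} \to u_0$ strongly in $W^{1,p}_{\loc}$ and a.e., we get $\nabla w_j \to \nabla w$ strongly in $L^p(B)$ and $w_j \to w$ a.e.\ and in $L^p(B)$; the gradient and source terms then pass to the limit exactly as above. For the potential, $F_{\gamma_j}(w_j) \le \lambda_+ (w_j^+)^{\gamma_j} + \lambda_-(w_j^-)^{\gamma_j}$ and by Lemma \ref{a 1} one can bound $(w_j^\pm)^{\gamma_j}$ so as to apply dominated convergence, using $t^{\gamma_j} \le \max\{1,t\}$ for the domination; the pointwise limit is $F_0(w)$ off $\{w=0\}$, and one should note that the potential $F_0$ is upper semicontinuous in the right sense so that $\limsup_j \int_B F_{\gamma_j}(w_j) \le \int_B F_0(w)$ even without controlling $|\{w=0\}|$ (the inequality direction is all that is needed for a recovery sequence). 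Assembling: $\mathcal{J}_0(u_0) \le \liminf_j \mathcal{J}_{\gamma_j}(u_{\gamma_j}) \le \liminf_j \mathcal{J}_{\gamma_j}(w_j) \le \limsup_j \mathcal{J}_{\gamma_j}(w_j) \le \mathcal{J}_0(w)$.

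The main obstacle is the potential term on the zero level set. On the $\liminf$ side one genuinely needs $|\{u_0=0\}|=0$ to identify $\lim \int F_{\gamma_j}(u_{\gamma_j})$ with $\int F_0(u_0)$ exactly — an inequality in the wrong direction would break minimality — and this is precisely where the strong non-degeneracy of Section \ref{S. nondeg} (valid because $f\in L^q$, $q>n$) is indispensable; it is the reason the stability theorem is placed after the lower gradient bounds. I would expect the recovery-sequence side to be routine by comparison, since only an upper bound on the limsup of the potential is required there and that follows from the elementary inequality $(a+b)^{\gamma} \le a^\gamma + b^\gamma$ of Lemma \ref{a 1} together with dominated convergence.
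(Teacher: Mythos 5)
Your overall strategy (squeeze $u_0$ between a $\liminf$ bound and a $\limsup$ bound for a recovery sequence, using the minimality of each $u_{\gamma_j}$) is the right one, and it is in spirit the same squeeze the paper runs. But two of your concrete steps have genuine gaps, and both are precisely the places where the paper's construction is doing work that you dismiss as ``routine.''

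\textbf{The recovery sequence.} You propose $w_j := w + (u_{\gamma_j} - u_0)$, which perturbs the competitor \emph{inside} $B$. This makes the $\limsup$ inequality for the potential false in general. On the set $\{w=0\}$ you have $F_0(w)=\lambda_-$, but $w_j(X) = u_{\gamma_j}(X)-u_0(X)$ can be positive there; since $u_{\gamma_j}-u_0 \to 0$ only a.e.\ with no rate, the quantity $(w_j^+)^{\gamma_j}$ is indeterminate as $j\to\infty$ (e.g., $t_j^{\gamma_j}\to 1$ whenever $t_j\to 0$ slowly relative to $\gamma_j$), so one can have $\limsup_j F_{\gamma_j}(w_j(X)) = \lambda_+ > \lambda_- = F_0(w(X))$ on a set of positive measure. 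Lemma \ref{a 1} plus dominated convergence does not fix this: the issue is not domination but that the pointwise limit is wrong. The paper sidesteps this entirely by using an annular interpolation $\psi_{\gamma_j,h}$ that equals the \emph{fixed} competitor $\psi$ on all of $B_r$ and only interpolates between $u_0$ and $u_{\gamma_j}$ on the thin shell $B_{r+h}\setminus B_r$. Because the competitor is unchanged inside $B_r$, its potential satisfies the clean pointwise bound $F_{\gamma_j}(\psi) \le \|\psi\|_\infty^{\gamma_j}\, F_0(\psi)$, whose integral converges to $\int_{B_r} F_0(\psi)$ trivially as $\gamma_j\to 0$; the shell contribution is then killed by first sending $j\to\infty$ (using Proposition \ref{prop comp} and a.e.\ convergence to kill the $|\nabla u_{\gamma_j}-\nabla u_0|^p$ and $|u_{\gamma_j}-u_0|^p/h^p$ terms) and then $h\to 0$. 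This interpolation, not Lemma \ref{a 1}, is the key device on the recovery side.

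\textbf{The appeal to nondegeneracy.} To handle $\{u_0=0\}$ on the $\liminf$ side you propose to invoke Theorem \ref{t.strong nondeg} to conclude $|\{u_0=0\}|=0$. This is circular: both Theorem \ref{t.growth} and Theorem \ref{t.strong nondeg} are stated for a \emph{local minimizer of $\mathcal{J}_0$}, and the whole point of Theorem \ref{t.ACF funct} is to establish that $u_0$ is such a minimizer. The paper does not use those nondegeneracy results here at all; it handles the lower semicontinuity of the potential by the same Fatou-type argument used in the proof of Theorem \ref{l.bound} (eqn \eqref{AC eq 6}), without any hypothesis on $|\{u_0=0\}|$. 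Your instinct that the zero level set is the delicate point is sound, but the tool you reach for is not available at this stage of the logical chain.
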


\begin{proof} Let $B_r$ be a ball in $\Omega$. Given an arbitrary $W^{1,p}$
function $\psi$ that agrees with $u_0$ on $\partial B_r$, we have
to show that
$$
    \mathcal{J}_0(B_r, u_0) \le \mathcal{J}_0(B_r, \psi).
$$
By density we may further assume that $\psi$ is bounded. Let us define the interpolated function
$$
    \psi_{\gamma_{j}, h} := \left \{ \begin{array}{ccc} u_0 + \dfrac{|X| -
    r}{h} \left ( u_{\gamma_{j}}  - u_0 \right ) &\text{in}& B_{r+h}
    \setminus B_r \\
    \psi & \text{in} & B_r. \end{array} \right.
$$
One simply verifies that
\begin{eqnarray}\label{AC eq 1}
| \nabla \psi_{\gamma_{j}, h}|^p & \le & C_{p} \left\lbrace
|\nabla u_0|^{p} + \dfrac{1}{h^{p}}|u_{\gamma_{j}} - u_0|^p +
|\nabla u_{\gamma_{j}} - \nabla u_{0}|^{p} \right\rbrace , \quad
\text{ in } B_{r+h} \setminus B_r.
\end{eqnarray}
In the above estimate, we have used the classical facts:
\begin{eqnarray}
\nabla \left( |X|\right)= \frac{X}{|X|} \ \ \ \mbox{and} \ \ \ \left( \dfrac{|X| - r}{h}\right)^{p} \leq 1 \ \ \ \ \  \mbox{in} \  B_{r+h} \setminus B_r.
\end{eqnarray}
By $L^\infty$ bounds, Theorem \ref{l.bound}, there exists a
constant $C_1 >0$, independent of $\gamma_{j}$, such that
$\|u_{\gamma_{j}} \|_\infty < C_1$. Thus, if we denote
$$
    H^{\pm}_{\gamma_{j}}(t) := (t^{\pm})^{\gamma_{j}},
$$
we have
\begin{equation}\label{AC eq 2}
 H^{\pm}_{\gamma_{j}}(\psi_{\gamma_{j}, h}) \le \left( 3C_{1}\right)^{\gamma_{j}}, \quad \text{in } B_{r+h} \setminus B_r,
\end{equation}
and
\begin{equation}\label{AC eq 2.1}
 H^{\pm}_{\gamma_{j}}(\psi_{\gamma_{j}, h}) \le \left( \Vert \psi \Vert_{L^{\infty}\left( B_r\right) }\right)^{\gamma_{j}}\chi_{\{u_{\gamma_{j}} \gtrless 0 \}}, \quad \text{in } B_r.
\end{equation}
We can estimate
\begin{eqnarray}\label{AC eq 3}
\mathcal{J}_{\gamma_{j}}(B_{r+h}, \psi_{\gamma_{j}, h}) &=& \nonumber \int_{B_{r+h} \setminus B_r}  |\nabla \psi_{\gamma_{j}, h}|^p + \lambda_{+} H^{+}_{\gamma_{j}}(\psi_{\gamma_{j}, h}) + \lambda_{-} H^{-}_{\gamma_{j}}(\psi_{\gamma_{j}, h}) dX \\
& + &  \int_{B_{r+h} \setminus B_r}f\left( X \right)\left[ u_{0} + \dfrac{|X| - r}{h} \left ( u_{\gamma_{j}}  - u_0 \right )\right]dX + \mathcal{J}_{\gamma_{j}}(B_r, \psi) \\ \nonumber
& \le & C_{p} \int_{B_{r+h} \setminus B_r}|\nabla u_{0}|^{p}dX + C_{p}\int_{B_{r+h} \setminus B_r}|\nabla u_{\gamma_{j}} - \nabla u_{0}|^{p}dX \\ \nonumber
& + & \left[ 2\lambda_{+} \left( 3C_{1}\right)^{\gamma_{j}} + 3C_{1}\right] C_{p}|B_{r+h} \setminus B_{r}| + \dfrac{C_{p}}{h^{p}} \int_{B_{r+h} \setminus B_r} |u_{\gamma_{j}} - u_0|^p dX \\ \nonumber
& + &  \mathcal{J}_0(B_r, \psi)+ \left( \Vert \psi \Vert_{L^{\infty}\left( B_r\right)}^{\gamma_{j}} - 1\right) \int_{B_r} \lambda_{+}\chi_{\{\psi >0 \}} + \lambda_{-}\chi_{\{\psi \leq 0 \}} dX \\ \nonumber
& + & |B_{r+h} \setminus B_r|^{1-\frac{1}{q}}\Vert f \Vert_{L^{q}\left( \Omega \right)}.
\end{eqnarray}
By pointwise convergence $u_{\gamma_{j}} \to u_0$ we have
\begin{equation}
\label{AC eq 3.1}
\lim_{j \rightarrow \infty} \displaystyle \int_{B_{r+h} \setminus B_r} |u_{\gamma_{j}} - u_0|^p dx = 0
\end{equation}
and by Proposition \ref{prop comp}
\begin{equation}
\label{AC eq 3.2}
\lim_{j \rightarrow \infty} \displaystyle \int_{B_{r+h} \setminus B_r} |\nabla u_{\gamma_{j}} - \nabla u_0|^p dx = 0.
\end{equation}
From the minimality property of $u_{\gamma_{j}}$,
\begin{equation}\label{AC eq 4}
    \mathcal{J}_{\gamma_{j}} (B_{r + h}, \psi_{\gamma_{j}, h}) \ge \mathcal{J}_{\gamma_{j}} (B_{r+h},
    u_{\gamma_{j}}) \ge \mathcal{J}_{\gamma_{j}} (B_r, u_{\gamma_{j}}) + \int_{B_{r+h}\setminus B_{r}}f(X)u_{\gamma_{j}}dX.
\end{equation}
Furthermore, it follows from Proposition \ref{prop comp}
\begin{equation}\label{AC eq 5}
    \int_{B_r} |\nabla u_0|^p dX  = \lim \limits_{j \to \infty}
    \int_{B_r} |\nabla u_{\gamma_{j}}|^p dX.
\end{equation}
By the pointwise convergence $u_{\gamma_{j}} \to u_0$ and Fatou's Lemma (see the proof of Theorem \ref{l.bound}),
we conclude
\begin{equation}\label{AC eq 6}
    \int_{B_r} \lambda_{+}\chi_{\{u_0 >0 \}} + \lambda_{-}\chi_{\{u_0 \leq 0 \}} dX \le \liminf\limits_{j \to \infty}
    \int_{B_r} \lambda_{+}\left( u_{\gamma_{j}}\right)^{\gamma_{j}} \chi_{\{u_{\gamma_{j}} >0 \}} + \lambda_{-}\left( u_{\gamma_{j}}\right)^{\gamma_{j}} \chi_{\{u_{\gamma_{j}} \leq 0
    \}}dX,
\end{equation}
and
\begin{equation}\label{AC eq 7}
\lim_{j \rightarrow \infty}\displaystyle \int_{B_r} f(X)u_{\gamma_{j}} dX = \lim_{j \rightarrow \infty}\displaystyle \int_{B_r} f(X)u_0 dX.
\end{equation}
Finally, combining \eqref{AC eq 3}--\eqref{AC eq 7} we reach
\begin{eqnarray}
\mathcal{J}_0(B_r, u_0) & \le & \nonumber \liminf\limits_{j \to \infty}
\mathcal{J}_{\gamma_{j}}(B_{r+h}, u_{\gamma_{j}}) \\
& \le & \mathcal{J}_0(B_r, \psi) + C_{p}\int_{B_{r+h} \setminus B_{r}} |\nabla u_{0}|^{p} dX  \\ \nonumber
& + & \left( 2\lambda_{+} + 3C_{1}\right)C_{p}|B_{r+h} \setminus B_{r}| + |B_{r+h} \setminus B_{r}|^{1-\frac{1}{q}}\Vert f \Vert_{L^{q}\left( \Omega \right)}.
\end{eqnarray}
Letting $h \rightarrow 0$, we finish the proof of Theorem \ref{t.ACF funct}.
\end{proof}

\bigskip


\noindent \textsc{Eduardo V. Teixeira} \hfill \textsc{Raimundo Leit\~ao}\\
Universidade Federal do Cear\'a \hfill  Universidade Federal do Cear\'a \\
Campus of Pici - Bloco 914 \hfill Campus of Pici - Bloco 914 \\
Fortaleza - Cear\'a - Brazil  \hfill Fortaleza - Cear\'a - Brazil  \\
60.455-760 \hfill 60.455-760 \\
\texttt{teixeira@mat.ufc.br} \hfill \texttt{juniormatufc@yahoo.com.br}

\bigskip

\noindent \textsc{Olivaine S. de Queiroz} \\
Departamento de Matem\'atica\\
Universidade Estadual de Campinas -- IMECC\\
Rua S\'ergio Buarque de Holanda, 651\\
Campinas, SP, Brazil\\
CEP 13083-859\\
\texttt{olivaine@ime.unicamp.br}


\begin{thebibliography}{99}

\bibitem{AC}
Alt, H. M. and Caffarelli, L. A. Existence and regularity for a
minimum problem with free boundary. {\it J. Reine Angew. Math.},
\textbf{325}, (1981), 105--144.


\bibitem{ACF}
Alt, H. W., Caffarelli, L. and Friedman, A. Variational problems
with two phases and their free boundaries. {\it Trans. Amer. Math.
Soc.} \textbf{282}, (1984), no. 2, 431--461.


\bibitem{alt-phillips-crelle}
Alt, H. M. and Phillips, D. A free boundary problem for semilinear
elliptic equations. {\it J. Reine Angew. Math.}, \textbf{368},
(1986), 63--107.


\bibitem{BM}
Boccardo, L. and Murat, F. Almost everywhere convergence of the
gradients of solutions to elliptic and parabolic equations. {\it
Nonlinear Anal.} \textbf{19}, (1992), no. 6, 581--597.



\bibitem{CJK} Caffarelli, L. A.; Jerison, D. and Kenig, C. E.
Some new monotonicity theorems with applications to free boundary
problems. {\it Ann. Math.}, \textbf{155}, (2002 ) 369--404.

\bibitem{dibened-na1983}
DiBenedetto, E. $C^{1,\alpha}$ local regularity of weak solutions
of degenerate elliptic equations. {\it Nonlinear Anal.} \textbf{7}
(8), (1983), 827--850.


\bibitem{DM} Duzaar, F. and Mingione, G. Local Lipschitz regularity for degenerate elliptic systems. {\it Ann. Inst. H. Poincar\'e, Anal. Non Lin\'eaire} {\bf 27} p.1361--1396.

\bibitem{evans-jde}
Evans, L. C. A new proof of local $C^{1,\alpha }$ regularity for
solutions of certain degenerate elliptic p.d.e. {\it J.
Differential Equations.} \textbf{45}, (1982), no. 3, 356--373.

\bibitem{FS} Fefferman, C. and Stein, E. M. $H^p$ spaces of several variables. {\it Acta Math.} \textbf{129} (1972) 137--193.

\bibitem{GG}
Giaquinta, M. and Giusti, E. Differentiability of minima of
non-differentiable functionals. {\it Invent. Math.} \textbf{72},
285--298 (1983).

\bibitem{KKPS} Karp, L., Kilpel\"ainen, T., Petrosyan, A. and Shahgholian, H.
On the porosity of free boundaries in degenerate variational
inequalities.  {\it J. Differential Equations}, \textbf{164},
(2000), no. 1, 110--117.


\bibitem{KNV}  Kiselev, A.;  Nazarov, F. and Volberg, A. Global well-posedness for the critical 2D dissipative quasi-geostrophic equation.
{\it Invent. Math.} {\bf167}, No. 3, 445--453 (2007).

\bibitem{MT} Moreira, D.; Teixeira, E. On the behavior of weak convergence under nonlinearities and applications.
{\it Proc. Amer. Math. Soc.}  {\bf 133}  (2005),  no. 6, 1647--1656.

\bibitem{iwa-man-rmi}
Iwaniec, T. and Manfredi, J. J. Regularity of $p$-harmonic functions
on the plane. {\it Rev. Mat. Iberoamericana.} \textbf{5}, (1989),
no. 1-2, 1--19.

\bibitem{lewis-iumj}
Lewis, J. L. Regularity of the derivatives of solutions to certain
degenerate elliptic equations. {\it Indiana Univ. Math. J.}
\textbf{32}, (1983), no. 6, 849--858.


\bibitem{lieberman}
Lieberman, G. The natural generalization of the natural conditions of Ladyzhenskaya and Ural'tseva for elliptic equations. {\it Comm. partial Differential Equatiions} \textbf{16}, (1991), no. 2-3, 311--361.

\bibitem{maly-ziemer-book}
Mal\'y, J. and Ziemer, W. P. {\it Fine regularity of solutions of
elliptic partial differential equations.} Mathematical Surveys and
Monographs, 51. American Mathematical Society, Providence, RI, {1997}.


\bibitem{phillips-iumj}
Phillips, D. A minimization problem and the regularity of solutions
in the presence of a free boundary. {\it Indiana Univ. Math. J.},
\textbf{32} (1983), 1--17.

\bibitem{phillips-cpde}
Phillips, D. Hausdorff measure estimates of a free boundary for a
minimum problem, {\it Comm. Partial Differential Equations},
\textbf{8} (1983), 1409--1454.

\bibitem{S} Shahgholian, H. $C^{1,1}$ regularity in semilinear elliptic problems. {\it Comm. Pure Appl. Math.} {\bf 56} (2) (2003), 278--281.


\bibitem{Serrin63} Serrin, J. A Harnack inequality for nonlinear equations {\it Bull. Amer. Math. Soc.}  Vol. 69, No 4 (1963), 481--486. 

\bibitem{T}  Teixeira, E. V. Universal moduli of continuity for solutions to fully nonlinear elliptic equations.
\textit{Preprint} available at {\tt http://arxiv.org/abs/1111.2728}.

\bibitem{T2} Teixeira, E. V.  Sharp regularity for general Poisson equations with borderline sources.  J. Math. Pures Appl. (9) 99 (2013), no. 2, 150--164


\bibitem{tolk-jde}
Tolksdorf, P. Regularity for a more general class of quasilinear
elliptic equations. {\it J. Differential Equations.} \textbf{51},
(1984), no. 1, 126--150.

\bibitem{ON} Ladyzhenskaya, O. A. and Ural'tseva. N. N.
{\it Linear and Quasilinear Elliptic Equations.} Mathematics in
Science and Engineering, 46, Academic Press, New York, {1968}.


\bibitem{uhlemb-acta}
Uhlenbeck, K. Regularity for a class of non-linear elliptic
systems. {\it Acta Math.} \textbf{138}, (1977), no. 3-4, 219--240.

\bibitem{uralt-68}
Ural'ceva, N. N. Degenerate quasilinear elliptic systems. {\it
(Russian) Zap. Nauc(n. Sem. Leningrad. Otdel. Mat. Inst. Steklov.}
(LOMI), \textbf{7}, (1968), 184--222.

\bibitem{weiss-cpde}
Weiss, G. S. Partial regularity for weak solutions of an elliptic
free boundary problem. {\it Comm. Partial Differential Equations},
\textbf{23}, (1998), no. 3-4, 439--455.


\end{thebibliography}
\end{document}